\newtheorem{theorem}{Theorem}[section]
\newtheorem{theorem*}{Theorem}
\newtheorem{lemma}[theorem]{Lemma}
\newtheorem{corollary}[theorem]{Corollary}
\newtheorem{proposition}[theorem]{Proposition}
\theoremstyle{definition}
\newtheorem{definition}[theorem]{Definition}
\newtheorem{example}[theorem]{Example}
\newcommand{\CC}{\mathbb{C}}
\newcommand{\TT}{\mathbb{T}}
\newcommand{\Nn}{\mathcal{N}}
\newcommand{\Cc}{\mathcal{C}}
\newcommand{\Ss}{\mathcal{S}}
\newcommand{\Gg}{\mathcal{G}}
\newcommand{\bB}{\mathfrak{B}}
\newcommand{\mM}{\mathfrak{M}}
\newcommand{\sspan}{\operatorname{span}}
\newcommand{\Aut}{\operatorname{Aut}}
\newcommand{\eps}{\varepsilon}
\newcommand{\Id}{\operatorname{id}}
\newcommand{\supp}{\operatorname{supp}}
\newcommand{\ess}{\mathrm{ess}}
\newcommand{\red}{\mathrm{r}}
\newcommand{\alg}{\mathrm{alg}}
\newcommand{\Mloc}{M_{\mathrm{loc}}}
\newcommand{\dom}{\mathrm{dom}}
\newcommand{\ran}{\mathrm{ran}}
\DeclareMathOperator{\clspan}{\overline{span}}
\theoremstyle{remark}
\newtheorem{remark}[theorem]{Remark}
\title{Essential Cartan subalgebras of $C^*$-algebras}
\author{Jonathan Taylor}
\address{Institut für Mathematik, University of Potsdam, Campus Golm, Haus 9, Karl-Liebknecht-Str. 24-25, 14476, Germany}
\email{jonathan.taylor@uni-potsdam.de}
\thanks{The author was supported by a stipend from the German Academic Exchange Service (DAAD) Funding Program 57450037 and was an affiliated member of the RTG2491 ``Fourier Analysis and Spectral Theory''}
\begin{document}
	
	\begin{abstract}
		We define essential Cartan pairs of $C^*$-algebras generalising the definition of Renault \cite{Renault_Cartan} and show that such pairs are given by essential twisted groupoid $C^*$-algebras as defined by Kwa\'sniewski and Meyer \cite{KwasMeyer_EssCrossProd}.
		We show that the underlying twisted groupoid is effective, and is unique up to isomorphism among twists over effective groupoids giving rise to the essential Cartan pair.
		We also show that for twists over effective groupoids giving rise to such pairs, the automorphism group of the twist is isomorphic to the automorphism group of the induced essential Cartan pair via explicit constructions.
	\end{abstract}
	
	\maketitle
	
	\section{Introduction}
	
	The class of $C^*$-algebras arising from groupoid constructions contains many important examples, including all commutative $C^*$-algebras and all group $C^*$-algebras.
	Renault's reconstruction theorem provides a valuable strategy for describing a \(C^*\)-algebra as the \(C^*\)-algebra of a twist over an \'etale groupoid.
	Renault \cite{Renault_Cartan} defined a \emph{Cartan subalgebra} of a \(C^*\)-algebra as a regular, non-degenerate, maximal commutative subalgebra equipped with a faithful conditional expectation.
	Renault showed that if a separable \(C^*\)-algebra has a Cartan subalgebra then it is isomorphic to a twisted groupoid $C^*$-algebra of a twist over a second-countable \'etale effective locally compact Hausdorff groupoid, via an isomorphism mapping the Cartan subalgebra to the algebra of continuous functions on the unit space of the groupoid.
	Moreover, this groupoid and twist are unique up to isomorphism among such groupoids.
	This reconstruction was extended by Raad \cite{Raad_GeneraliseRenault} and Kwa\'sniewski and Meyer \cite{KwasMeyer_NCCart} to Cartan subalgebras of non-separable \(C^*\)-algebras; the resulting groupoid and twist are no longer second-countable.
	
	One notable application of Renault's reconstruction theorem was by Li \cite{Li_ClassifCartan}, where it was shown that the inductive limit of a system of \(C^*\)-algebras with Cartan subalgebras also has a Cartan subalgebra, provided the connecting maps preserve the Cartan structure.
	A significant consequence of this is that every Elliott-classifiable $C^*$-algebra has a Cartan subalgebra, and so every classifiable $C^*$-algebra is a twisted groupoid $C^*$-algebra for a twist over a second-countable \'etale effective locally compact Hausdorff groupoid.
	This deeply intertwines the research of classification with the field of groupoids and their algebras, and allows for more geometric approaches to classification problems.
	
	Over time, the study of groupoid algebras widened to include non-Hausdorff groupoids.
	Exel and Pitts \cite{ExelPitts_AlgNonHDGrpds} define the \emph{essential} \(C^*\)-algebra of groupoids with sufficiently small isotropy, and provide an analogue of Renault's reconstruction theorem for inclusions of \(C^*\)-algebras with favourable state-extension properties.
	Alternatively, Kwa\'sniewski and Meyer \cite{KwasMeyer_EssCrossProd} give a suitable definition of essential groupoid \(C^*\)-algebra generalising the definition of Exel and Pitts in many cases, which they use to study inclusions of noncommutative subalgebras and their properties.
	The study of non-Hausdorff groupoids is in part motivated by their frequent appearance in the study of dynamics on topological spaces.
	Simple examples of Exel \cite{Exel_NonHausdorffGrpds} show that even group actions on topological spaces may have non-Hausdorff groupoids of germs.
	More generally, given a Hausdorff \'etale groupoid, one may wish to study the effective quotient of the groupoid to eliminate as much isotropy as possible while remaining \'etale.
	The resulting quotient is again Hausdorff if and only if the interior of the isotropy of the groupoid happens to be closed, which is difficult to determine.
	
	Kwa\'sniewski and Meyer employed many techniques surrounding the use of \emph{pseudoexpectations} of the inclusions they studied; these are a generalisation of conditional expectations that may take values in Hamana's injective hull (see \cite{Hamana_InjEnvs}) of the subalgebra, rather than the subalgebra itself.
	The main advantage of considering such expectations is that they always exist, whereas (genuine) conditional expectations must be either assumed or shown to exist.
	When the subalgebra considered is commutative, Hamana's injective hull coincides with the \emph{local multiplier algebra} (see \cite{FrankPaulsen_InjEnvCStarAlgs}), and so it suffices to consider \emph{local conditional expectations} which take values in the local multiplier algebra \(\Mloc(A)\) of the subalgebra \(A\).
	
	The main objects of study in this article are \emph{essential Cartan pairs}, which are a generalisation of Renault's Cartan pairs by allowing the consideration of pseudoexpectations in place of the conditional expectations in Renault's work.
	Equivalently, if a regular maximal commutative subalgebra detects ideals in the larger superalgebra (i.e. it is a \emph{virtual Cartan subalgebra} in the sense of Pitts \cite{Pitts_StructRegInc1}) then it will have a faithful pseudoexpectation and hence induce an essential Cartan pair.
	The main result of this article is a generalisation of Renault's reconstruction theorem for essential Cartan pairs, where the shift from genuine conditional expectations to pseudoexpectations manifests in the underlying reconstructed groupoids no longer necessarily being Hausdorff.
	
	\begin{theorem*}[{see Theorem~\ref{thm-isoEssTwstGrpdCstarAlg}}]\label{intro-thm1}
		Let $A\subseteq B$ be an essential Cartan pair with faithful local conditional expectation $E:B\to\Mloc(A)$.
		Let $(G(A,B),\Sigma(A,B))$ by the Weyl twist associated to $A\subseteq B$.
		There is a ${}^*$-isomorphism $C^*_\ess(G(A,B),\Sigma(A,B))\cong B$ that restricts to an isomorphism $C_0(G(A,B)^{(0)})\cong A$ and intertwines $E$ with the canonical local expectation $C^*_\ess(G(A,B),\Sigma(A,B))\to\Mloc(C_0(G(A,B)^{(0)}))$, where $\Mloc(A)$ is identified with $\Mloc(C_0(G(A,B)^{(0)}))$.
	\end{theorem*}

	In his proof of the reconstruction theorem for Cartan pairs, Renault introduces the \emph{evaluation map}, which sends elements of the \(C^*\)-algebra with a Cartan subalgebra to functions on the constructed groupoid twist.
	This, when combined with the \(j\)-map allowing the description of the reduced \(C^*\)-algebra of a twist as an algebra of functions on the twist, gives the desired isomorphism.
	This tool is no longer available for two (intrinsically related) reasons.
	One, there is no \(j\)-map for the essential groupoid \(C^*\)-algebra: at best its elements may be described as equivalence classes of functions but not as functions on the nose.
	Two, the evaluation map describes functions on twists by applying the conditional expectation and evaluating the resulting functions in the commutative subalgebra at points.
	Since we consider pseudoexpectations in place of genuine conditional expectations, we may not identify the values of our expectations with functions on the spectrum of the essential Cartan subalgebra.
	In fact, the local multiplier algebra of a commutative \(C^*\)-algebra \(C_0(X)\) can be identified with the quotient of the bounded Borel-measureable functions by the functions with meagre support (see \cite{Dixmier_espacesStone},\cite{Gonshor_InjHulls2}), and hence elements of \(\Mloc(C_0(X))\) are equivalence classes of functions on \(X\).
	Under this identification, it does not make sense to evaluate an element of \(\Mloc(C_0(X))\) at a single point of \(X\) (except in the rare case that the singleton of this point is open).

	Bice \cite{Bice_Sections} provided an alternative construction of the reduced and essential \(C^*\)-algebras of a twist over an \'etale groupoid as closures of a canonical subalgebra in a large ambient space of sections.
	This framework describes the \(j\)-map implicitly as an inclusion of spaces, and we will make use of this characterisation of the reduced and essential \(C^*\)-algebras throughout the article. 
	
	We also employ techniques used in the study of inclusions of \(C^*\)-algebras generated by inverse semigroup actions by Kwa\'sniewski and Meyer in \cite{KwasMeyer_EssCrossProd} and \cite{KwasMeyer_NCCart}.
	The generalisation to consider $C^*$-inclusions with local multiplier algebra valued conditional expectations arose in \cite{KwasMeyer_EssCrossProd} for defining the essential crossed product of inverse semigroup actions.
	For such crossed products, there is always a canonical faithful conditional expectation taking values in the local multiplier algebra of the subalgebra.
	Kwa\'sniewski and Meyer also introduce the notion of \emph{aperiodic} inclusions and \emph{aperiodic} actions, and prove a number of results for such inclusions in \cite{KwasMeyer_AperTopoFree}, \cite{KwasMeyer_EssCrossProd}, and \cite{KwasMeyer_PseudoExp}.
	We show that aperiodicity is automatically satisfied for masa inclusions, and so we may utilise these results.
	Combined with some point-set topological arguments to manage the non-Hausdorffness of the underlying groupoids, we are able to construct the isomorphism in Theorem~\ref{intro-thm1} by choosing canonical representatives of specific elements of the local multiplier algebra of an essential Cartan subalgebra.
	This leads to a homomorphism of dense subalgebras, which extends to the necessary completions and quotients by using the conditional expectation.
	
	We also show that if a twist over an \'etale groupoid with locally compact Hausdorff unit space gives rise to an essential Cartan pair then the associated Weyl twist is a twist over the effective quotient of this groupoid.
	Hence we show that twists over such groupoids descend to their effective quotients.
	Moreover, we show that any twist over an effective \'etale groupoid with locally compact Hausdorff unit space gives rise to an essential Cartan pair.
	
	More general regular masa inclusions also fall partly into this framework.
	Since such inclusions always have a unique local multiplier algebra-valued conditional expectation (see Corollary~\ref{cor-masaIncIsAper} and \cite[Theorem~3.6]{KwasMeyer_PseudoExp}), the only remaining condition to check is that the conditional expectation is faithful.
	If it is not, work of Kwa\'sniewski and Meyer \cite[Theorem~4.11]{KwasMeyer_EssCrossProd} shows that there is a unique quotient of the larger algebra into which the subalgebra embeds, and the conditional expectation descends to a faithful conditional expectation on this quotient.
	We show that the induced inclusion into the quotient is again maximal abelian, and hence is an essential Cartan pair.
	
	Lastly, we provide constructions of automorphisms of twisted groupoids from automorphisms of the induced essential Cartan pairs and vice versa.
	If the twist is over an effective groupoid, these constructions are mutually inverse and the resulting respective automorphism groups are isomorphic.
	
	\subsection*{Acknowledgements}
	The author would like to thank his doctoral advisor Ralf Meyer for all the assistance and expertise he provided.
	The article consists primarily of results from the author's PhD thesis \cite{Tay_Phd}.
	The author would also like to thank Sven Raum for his valuable insight and feedback to better motivate and contexualise this article, helping demonstrate its relevance in the current landscape of the topic.
	
	\section{$C^*$-algebras of twists over groupoids}
	
	Let $G$ be an \'etale groupoid.
	A \emph{twist} over $G$ is a central extension $\Sigma$ of $G$ by $G^{(0)}\times\TT$; the unit space times the circle group
	$$G^{(0)}\times\TT\hookrightarrow\Sigma\twoheadrightarrow G.$$
	By definition, $\Sigma$ carries a central action of $\TT$.
	From this one may construct a canonical line bundle associated to the twist $(G,\Sigma)$ as $L:=\frac{\Sigma\times\CC}{\TT}$, where the quotient is by the action $z(\sigma,\lambda)=(z\sigma,\bar{z}\lambda)$ for $z\in\TT$, $\sigma\in\Sigma$, and $\lambda\in\CC$.
	Sections of this bundle can be identified with functions $\Sigma\to\CC$ satisfying $f(z\sigma)=\bar{z}f(\sigma)$, and we shall often use these two characterisations interchangeably.
	
	We say an inclusion $A\subseteq B$ of $C^*$-algebras is \emph{non-degenerate} if $A$ contains an approximate unit for $B$.
	Renault defines the Weyl groupoid and Weyl twist in \cite{Renault_Cartan} for non-degenerate inclusions of commutative $C^*$-subalgebras, following the construction of Kumjian in \cite[Theorem~3.1]{Kumjian_CstarDiags}.
	For our purposes we need not alter the construction of the Weyl groupoid and twist to accommodate for our more general definition of essential Cartan pairs.
	
	\begin{definition}[{\cite[1.1]{Kumjian_CstarDiags}}]
		Let $A\subseteq B$ be an inclusion of $C^*$-algebras.
		A \emph{normaliser} of $A$ in $B$ is an element $n\in B$ satisfying $n^*An,nAn^*\subseteq A$.
		We call the inclusion $A\subseteq B$ \emph{regular} if the collection $N(A,B):=\{n\in B: n$ is a normaliser of $A\subseteq B\}$ of normalisers of $A$ inside of $B$ spans a dense subspace of $B$.
	\end{definition}
	
	If \(A\subseteq B\) is non-degenerate then $n^*n$ belongs to $A$ for any normaliser $n\in N(A,B)$ (see for example \cite[Lemma~4.5]{Renault_Cartan}).
	If $A=C_0(X)$ is a commutative non-degenerate $C^*$-subalgebra of $B$ then (following Kumjian \cite{Kumjian_CstarDiags} and Renault \cite{Renault_Cartan}) for any normaliser $n\in N(A,B)$ we can define $\dom(n):=\{x\in X: n^*n(x)>0\}$ and $\ran(n):=\{x\in X:nn^*(x)>0\}$.
	
	Kumjian \cite{Kumjian_CstarDiags} describes how normalisers of an inclusion $A\subseteq B$ of a commutative $C^*$-algebra implement partial homeomorphisms on the Gelfand dual $X$ of $A=C_0(X)$.	
	
	\begin{proposition}[{\cite[1.6]{Kumjian_CstarDiags}}]\label{prop-normaliserAction}
		Let $A=C_0(X)$ be a non-degenerate commutative $C^*$-subalgebra of $B$ and let $n\in N(A,B)$ be a normaliser.
		There exists a unique homeomorphism $\alpha_n:\dom(n)\to\ran(n)$ satisfying
		$$n^*an(x)=a(\alpha_n(x))n^*n(x),$$
		for all $x\in\dom(n)$ and $a\in A$.
	\end{proposition}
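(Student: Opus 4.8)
The plan is to build $\alpha_n$ through Gelfand duality: for each $x\in\dom(n)$ I will exhibit a character of $A=C_0(X)$ and let $\alpha_n(x)\in X$ be the point it represents. Write $c:=n^*n$ and $d:=nn^*$, both of which lie in $A$ (the latter since $n$ is a normaliser, the former by non-degeneracy as noted after the definition of normaliser), so that $\dom(n)=\{c>0\}$ and $\ran(n)=\{d>0\}$ are open subsets of $X$. The decisive step is the algebraic identity
$$ n^*an\cdot n^*bn \;=\; n^*(ab)n\cdot c \qquad (a,b\in A), $$
which I would prove by inserting $nn^*\in A$ and exploiting that $A$ is commutative, hence $nn^*$ is central in $A$:
$$ n^*an\cdot n^*bn \;=\; n^*a(nn^*)bn \;=\; n^*(nn^*)(ab)n \;=\; (n^*n)\bigl(n^*(ab)n\bigr) \;=\; n^*(ab)n\cdot c. $$
Evaluating at a point $x$ with $c(x)>0$, this says exactly that $a\mapsto\chi_x(a):=(n^*an)(x)/c(x)$ is multiplicative; it is clearly linear and $*$-preserving, and it is non-zero because, picking an approximate unit $(e_\lambda)$ for $B$ contained in $A$, one has $\|n^*e_\lambda n-n^*n\|\le\|n\|\,\|e_\lambda n-n\|\to0$, so $\chi_x(e_\lambda)\to1$. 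Thus $\chi_x$ is a non-zero $*$-homomorphism $A\to\CC$, i.e.\ evaluation at a unique point $\alpha_n(x)\in X$, and this is precisely the asserted identity $n^*an(x)=a(\alpha_n(x))\,n^*n(x)$.

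Next I would record that $\alpha_n$ lands in $\ran(n)$: substituting $a=d=nn^*\in A$ into the identity and using $n^*(nn^*)n=(n^*n)^2=c^2$ yields $d(\alpha_n(x))\,c(x)=c(x)^2$, hence $d(\alpha_n(x))=c(x)>0$. Running the same construction for $n^*$ gives a map $\alpha_{n^*}\colon\ran(n)\to\dom(n)$ (note $\dom(n^*)=\ran(n)$ and $\ran(n^*)=\dom(n)$). To see these are mutually inverse, substitute $b=nan^*\in A$ into the defining identity for $\alpha_n$: since $n^*(nan^*)n=cac=ac^2$ one obtains $(nan^*)(\alpha_n(x))=a(x)\,c(x)$, and feeding this into the defining identity for $\alpha_{n^*}$ evaluated at $\alpha_n(x)$ (where $d(\alpha_n(x))=c(x)$) gives $a(\alpha_{n^*}(\alpha_n(x)))\,c(x)=a(x)\,c(x)$, whence $\alpha_{n^*}\circ\alpha_n=\Id$ on $\dom(n)$; symmetrically $\alpha_n\circ\alpha_{n^*}=\Id$. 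Continuity of both maps is immediate: $x\mapsto a(\alpha_n(x))=(n^*an)(x)/c(x)$ is a quotient of continuous functions whose denominator does not vanish on the open set $\dom(n)$, and the Gelfand topology on $X$ is the initial topology induced by the functions $a\in A$ regarded as functions on $X$; so $\alpha_n$ is a homeomorphism. Uniqueness is the easy part: any map satisfying the stated identity must have $a(\alpha_n(x))=(n^*an)(x)/c(x)$ for every $a\in A$, and $C_0(X)$ separates points of $X$.

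I expect the only genuine content to be the displayed algebraic identity: this is where commutativity of $A$ is used in an essential way (noncommutatively $nn^*$ need not be central, and the formula genuinely fails), and once multiplicativity of $\chi_x$ is established the remaining claims — range, invertibility, continuity, uniqueness — are routine manipulations with that same identity together with basic Gelfand duality. The one small technical point to handle carefully is the non-vanishing of $\chi_x$, for which the non-degeneracy hypothesis (that $A$ contains an approximate unit for $B$) is exactly what is required.
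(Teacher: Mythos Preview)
Your proof is correct. The paper does not actually supply its own proof of this proposition: it is stated with a citation to Kumjian \cite[1.6]{K1} and no argument is given. Your approach---exhibiting $\chi_x(a)=(n^*an)(x)/(n^*n)(x)$ as a character via the identity $n^*an\cdot n^*bn=n^*(ab)n\cdot n^*n$, then reading off range, inverse, continuity, and uniqueness from Gelfand duality---is precisely the standard proof one finds in Kumjian's paper, so there is nothing to compare.
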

	
	\begin{lemma}[{\cite[1.7]{Kumjian_CstarDiags}}]\label{lem-WeylPseudogroupIsAPseudogroup}
		Let $A=C_0(X)$ be a non-degenerate commutative $C^*$-subalgebra of $B$.
		Then
		\begin{enumerate}
			\item if $a\in A$ then $\alpha_a=\Id_{\dom(a)}$;
			\item for $m,n\in N(A,B)$ we have $\alpha_{mn}=\alpha_m\circ\alpha_n$ (wherever the composition is defined) and $\alpha_n^{-1}=\alpha_{n^*}$.
		\end{enumerate}
	\end{lemma}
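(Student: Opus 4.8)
The plan is to derive both parts directly from the defining identity in Proposition~\ref{prop-normaliserAction} together with its uniqueness clause, exploiting commutativity of $A$ and the fact that $A=C_0(X)$ separates the points of $X$ by Gelfand duality. Throughout I would freely use that $n^*n,nn^*\in A$ for any normaliser, that $\dom(n^*)=\ran(n)$, and that every $a\in A$ is itself a normaliser with $\dom(a)=\ran(a)=\{x\in X:|a(x)|^2>0\}$.

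For (1), substitute $n=a\in A$ into the identity of Proposition~\ref{prop-normaliserAction}: for all $b\in A$ and $x\in\dom(a)$ we have $a^*ba(x)=b(\alpha_a(x))\,a^*a(x)$. Since $A$ is commutative, $a^*ba=b\,a^*a$, so $b(x)\,a^*a(x)=b(\alpha_a(x))\,a^*a(x)$; dividing by $a^*a(x)>0$ gives $b(x)=b(\alpha_a(x))$ for every $b\in A$. As $C_0(X)$ separates points this forces $\alpha_a(x)=x$, i.e. $\alpha_a=\Id_{\dom(a)}$.

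For (2), first note $mn\in N(A,B)$ since $(mn)^*A(mn)=n^*(m^*Am)n\subseteq n^*An\subseteq A$ and symmetrically $(mn)A(mn)^*=m(nAn^*)m^*\subseteq A$. Fix $x\in\dom(n)$ with $\alpha_n(x)\in\dom(m)$. Applying the identity to the normaliser $n$ and the element $m^*m\in A$ gives
\[
(mn)^*(mn)(x)=n^*(m^*m)n(x)=(m^*m)(\alpha_n(x))\,n^*n(x)>0,
\]
so $x\in\dom(mn)$. For arbitrary $a\in A$, using the identity first for $n$ with the element $m^*am\in A$ and then for $m$,
\[
(mn)^*a(mn)(x)=n^*(m^*am)n(x)=(m^*am)(\alpha_n(x))\,n^*n(x)=a(\alpha_m(\alpha_n(x)))\,(m^*m)(\alpha_n(x))\,n^*n(x),
\]
which equals $a(\alpha_m(\alpha_n(x)))\,(mn)^*(mn)(x)$. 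By the uniqueness in Proposition~\ref{prop-normaliserAction} this yields $\alpha_{mn}(x)=\alpha_m(\alpha_n(x))$ wherever the composite is defined. For the inverse statement, $n^*$ is a normaliser with $\dom(n^*)=\ran(n)$, and $n^*n\in A$; since $\alpha_n(\dom(n))=\ran(n)=\dom(n^*)$ the composite $\alpha_{n^*}\circ\alpha_n$ is defined on all of $\dom(n)$, and by (1) and the composition rule just proved $\alpha_{n^*}\circ\alpha_n=\alpha_{n^*n}=\Id_{\dom(n)}$; symmetrically $\alpha_n\circ\alpha_{n^*}=\alpha_{nn^*}=\Id_{\ran(n)}$, whence $\alpha_{n^*}=\alpha_n^{-1}$.

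The only subtlety is the domain bookkeeping in (2): one must check that the condition ``$x\in\dom(n)$ and $\alpha_n(x)\in\dom(m)$'' already forces $x\in\dom(mn)$, so that $\alpha_{mn}(x)$ is defined and the comparison is legitimate — this is precisely the positivity computation of $(mn)^*(mn)(x)$ above — and that $\dom(n^*)$ is exactly $\ran(n)$, so that $\alpha_{n^*}\circ\alpha_n$ is globally defined on $\dom(n)$. Beyond this, the argument is a mechanical consequence of commutativity and the uniqueness clause of Proposition~\ref{prop-normaliserAction}.
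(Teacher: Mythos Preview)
Your proof is correct. The paper does not supply its own proof of this lemma; it is stated with a citation to Kumjian \cite[1.7]{K1} and no argument is given in the body of the paper. Your approach---reading both statements off the defining identity of Proposition~\ref{prop-normaliserAction} and invoking its uniqueness clause, using that $C_0(X)$ separates points for part~(1) and carrying out the positivity computation $(mn)^*(mn)(x)=(m^*m)(\alpha_n(x))\,n^*n(x)>0$ to handle the domain bookkeeping in part~(2)---is the standard one, and matches Kumjian's original argument. One small point worth making explicit (you gesture at it but do not state it) is that $\dom(n^*n)=\{x:(n^*n)^2(x)>0\}=\{x:n^*n(x)>0\}=\dom(n)$, so that $\alpha_{n^*n}$ really is the identity on all of $\dom(n)$ and not merely on a possibly smaller set; but this is immediate.
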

	
	Let $W\subseteq N(A,B)$ be a closed linear subspace.
	We say $W$ is a \emph{slice} for the inclusion $A\subseteq B$ if $AW+WA\subseteq W$ and $W^*W+WW^*\subseteq A$.
	We observe that $W$ inherits a Hilbert $A$-bimodule structure from the ambient $C^*$-algebra $B$, and we denote by \(s(W)\) (resp. \(r(W)\)) the ideal of \(A\) generated by \(W^*W\) (resp. \(WW^*\)).
	Any two normalisers belonging to the same slice must implement the same partial homeomorphism of $X$ on the intersection of their domains.

	\begin{corollary}\label{cor-partialHomeoSameForASlice}
		Let $A$ be a non-degenerate commutative $C^*$-subalgebra of $B$ and let $W\subseteq B$ be a slice for the inclusion.
		For $m,n\in W$, let $U:=\dom(m)\cap\dom(n)$.
		Then $\alpha_m|_U=\alpha_n|_U$.
		\begin{proof}
			Since $W$ is a slice, the element $m^*n$ belongs to $A$, and so the associated partial homeomorphism $\alpha_{m^*n}$ is the identity map on $\dom(m^*n)$ by Lemma~\ref{lem-WeylPseudogroupIsAPseudogroup}.
			The same lemma implies that $\Id_{\dom(m^*n)}=\alpha_{m^*n}=\alpha_m^{-1}\circ\alpha_n$, whereby $\Id_{\dom(m^*n)}$ and $\alpha_m^{-1}\circ\alpha_n$ have the same domain, namely $U$.
			Thus, on $U$ we have
			\begin{align*}
				\alpha_m|_U&=\alpha_m|_U\circ\Id_{\dom(m^*n)}=\alpha_m|_U\circ\alpha_m^{-1}\circ\alpha_n|_U=\alpha_n|_U.\qedhere
			\end{align*}
		\end{proof}
	\end{corollary}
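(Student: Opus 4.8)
The plan is to route everything through the product $m^{*}n$, which lies in $W^{*}W\subseteq A$ because $W$ is a slice. By part~(1) of Lemma~\ref{lem-WeylPseudogroupIsAPseudogroup} the partial homeomorphism $\alpha_{m^{*}n}$ is then the identity on its domain, and by part~(2) it agrees with $\alpha_{m^{*}}\circ\alpha_{n}=\alpha_{m}^{-1}\circ\alpha_{n}$ wherever that composition is defined. Granting this, at any point $x$ where $\alpha_{m}^{-1}\circ\alpha_{n}$ is defined and equals the identity we get $\alpha_{n}(x)=\alpha_{m}\bigl(\alpha_{m}^{-1}(\alpha_{n}(x))\bigr)=\alpha_{m}(x)$, so the whole statement reduces to the topological bookkeeping that the set on which $\alpha_{m^{*}n}=\Id$ is exactly $U=\dom(m)\cap\dom(n)$.

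To pin this down I would use that, since $m^{*}n$ is a normal element of $A$, the products $(m^{*}n)^{*}(m^{*}n)=n^{*}(mm^{*})n$ and $(m^{*}n)(m^{*}n)^{*}=m^{*}(nn^{*})m$ both equal $|m^{*}n|^{2}$ in $A$ (recall $mm^{*},nn^{*}\in A$ for any normaliser). Evaluating at $x$ and inserting the defining relation of Proposition~\ref{prop-normaliserAction} gives, for $x\in\dom(m)\cap\dom(n)$,
\[
|m^{*}n(x)|^{2}=(mm^{*})\bigl(\alpha_{n}(x)\bigr)\,(n^{*}n)(x)=(nn^{*})\bigl(\alpha_{m}(x)\bigr)\,(m^{*}m)(x);
\]
bounding $mm^{*}$ and $nn^{*}$ by their norms here yields $\dom(m^{*}n)\subseteq\dom(m)\cap\dom(n)$ immediately.

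The reverse inclusion is where the slice hypothesis does the real work, since it also puts $nm^{*}\in WW^{*}\subseteq A$. For $x\in U$ set $\beta:=\alpha_{m}(x)\in\ran(m)$; the same computation applied to the normal element $nm^{*}$ gives $|nm^{*}(\beta)|^{2}=(n^{*}n)(x)(mm^{*})(\beta)>0$ on one side, and $|nm^{*}(\beta)|^{2}=\bigl(n(m^{*}m)n^{*}\bigr)(\beta)$ on the other, and the latter vanishes unless $\beta\in\ran(n)$; hence $\alpha_{m}(x)\in\ran(n)$, and by the symmetric argument $\alpha_{n}(x)\in\ran(m)$ for every $x\in U$. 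Feeding $\alpha_{n}(x)\in\ran(m)$ and $x\in\dom(n)$ back into the displayed identity shows $|m^{*}n(x)|^{2}>0$, so $U\subseteq\dom(m^{*}n)$ and thus $\dom(m^{*}n)=U$; moreover $\alpha_{n}(U)\subseteq\ran(m)=\dom(\alpha_{m}^{-1})$ means $\alpha_{m}^{-1}\circ\alpha_{n}$ is defined throughout $U$ and equals $\Id_{U}$, from which $\alpha_{m}|_{U}=\alpha_{n}|_{U}$ follows by the short composition computation. I expect the main obstacle to be exactly this domain identification: it is tempting to assert $\dom(\alpha_{m}^{-1}\circ\alpha_{n})=\dom(m)\cap\dom(n)$ by inspection, but that is true only because $W$ is a slice, and verifying $U\subseteq\dom(m^{*}n)$ via $nm^{*}\in A$ is where the content lies.
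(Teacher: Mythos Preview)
Your argument is correct and follows the same overall route as the paper: use that $m^*n\in W^*W\subseteq A$, invoke Lemma~\ref{lem-WeylPseudogroupIsAPseudogroup} to get $\alpha_{m^*n}=\Id_{\dom(m^*n)}$ and $\alpha_{m^*n}=\alpha_m^{-1}\circ\alpha_n$, and conclude $\alpha_m|_U=\alpha_n|_U$.

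The difference is that you actually justify the domain identity $\dom(m^*n)=U$, whereas the paper simply asserts it. You correctly observe that the domain of $\alpha_m^{-1}\circ\alpha_n$ is $\{x\in\dom(n):\alpha_n(x)\in\ran(m)\}$, and that showing this contains all of $U=\dom(m)\cap\dom(n)$ is not automatic: one must check $\alpha_n(x)\in\ran(m)$ for $x\in U$. Your argument via the normality of $nm^*\in WW^*\subseteq A$ and the two expressions for $|nm^*(\alpha_m(x))|^2$ does exactly this. The paper's proof uses only $W^*W\subseteq A$ and takes the equality $\dom(m^*n)=U$ for granted; your version uses both halves of the slice condition ($W^*W\subseteq A$ and $WW^*\subseteq A$) and is the more complete of the two. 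The trade-off is that the paper's proof is shorter, while yours closes a genuine gap that a careful reader would otherwise have to fill in themselves.
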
	

	The collection of partial homeomorphisms $\Gg(A):=\{\alpha_n:n\in N(A,B)\}$ forms a \emph{pseudogroup} acting on $X=\hat{A}$, that is, a subsemigroup of the semigroup of partial homeomorphisms of \(X\) which is closed under taking partial inverses and restricting to open subsets.
	
	\begin{definition}[{\cite[Definition~4.2]{Renault_Cartan}}]
		We call $\Gg(A)$ the \emph{Weyl pseudogroup} of the pair $(A,B)$. 
		Define the \emph{Weyl groupoid} $G(A,B)$ of $(A,B)$ as the groupoid of germs of $\Gg(A)$ (see \cite[Section~3]{Renault_Cartan}).
	\end{definition}

	Concretely, the Weyl groupoid is defined as the quotient of $D=\{(\alpha_n,x):n\in N(A,B),x\in\dom(n)\}$ by the equivalence relation
	\begin{align*}
		(\alpha_n,x)\sim(\alpha_m,y)\iff &x=y\text{ and there exists open }U\subseteq X\\
		&\text{ with }x\in U\text{ such that }\alpha_n|_U=\alpha_m|_U.
	\end{align*}
	The topology on $G(A,B)$ is given by basic open sets of the form 
	\begin{equation}\label{eq-UnBisections}
		U_n:=\{[\alpha_n,x]:x\in\dom(n)\}.
	\end{equation}
	The composition of two equivalence classes $[\alpha_n,x]$ and $[\alpha_m,y]$ in $G(A,B)$ is defined if $x=\alpha_m(y)$, and the product is given by $[\alpha_n,x]\cdot[\alpha_m,y]=[\alpha_n\circ\alpha_m,y]$, which by Lemma~\ref{lem-WeylPseudogroupIsAPseudogroup} is equal to $[\alpha_{mn},y]$.
	The inverse is given by $[\alpha_n,x]^{-1}=[\alpha_{n^*},\alpha_n(x)]$.
	The unit space of the Weyl groupoid can and often will be idenitied with the Gelfand dual space \(X=\hat A\) via the map \(X\to G(A,B)^{(0)}\) sending a point \(x\in X\) to the trivial germ \([f,x]\), where \(f\in C_0(X)\) is a function with \(f(x)\neq 0\).
	The germ relation ensures that the class \([f,x]\) does not depend on the choice of function \(f\) since the partial homeomorphism \(\alpha_f\) described in Proposition~\ref{prop-normaliserAction} is obviously trivial when \(f\) commutes with \(A=C_0(X)\).
	
	Renault defines the Weyl twist $\Sigma(A,B)$ by considering pairs $(n,x)$ where $n\in N(A,B)$ and $x\in\dom(n)$ (see \cite[Section~4]{Renault_Cartan}).
	Two pairs $(n,x)$ and $(m,y)$ are equivalent in the twist if $x=y$ and there are functions $a,b\in C_0(X)=A$ with $a(x),b(x)>0$ and $an=bm$.
	The product of equivalence classes $[n,x]$ and $[m,y]$ in the twist is defined whenever the product $[\alpha_n,x]$ and $[\alpha_m,y]$ is defined in the Weyl groupoid, and is given by $[m,x]\cdot[n,y]=[mn,y]$.
	Similarly to the Weyl groupoid, the inverse of $[n,x]$ in the Weyl twist is $[n^*,\alpha_n(x)]$.
	The canonical surjection $\Sigma(A,B)\to G(A,B)$ is given by $[n,x]\mapsto[\alpha_n,x]$, which Renault shows to indeed be a twist over $G(A,B)$ if $A\subseteq B$ is a maximal abelian and non-degenerate inclusion (see \cite[Proposition~4.12]{Renault_Cartan}).\\

	We say an \'etale groupoid $G$ is \emph{effective} if the interior of the isotropy of $G$ is the unit space.
	The following elementary observation will be useful throughout the article.
	\begin{lemma}\label{lem-HDUnitsCloseInIsot}
		Let \(G\) be an \'etale groupoid with Hausdorff unit space \(G^{(0)}\).
		The closure \(\overline {G^{(0)}}\) of the unit space is contained in the isotropy of \(G\).
		\begin{proof}
			Fix an arrow \(\gamma\) in the closure of \(G^{(0)}\) and let \((x_\lambda)\subseteq G^{(0)}\) be a net of units converging to \(\gamma\).
			As the range and source maps are continuous, we see that \(s(x_\lambda)=r(x_\lambda)=x_\lambda\) converges to bot the range and source of \(\gamma\).
			As \(G^{(0)}\) is Hausdorff and contains these limits, they must coincide, so \(\gamma\) belongs to the isotropy of \(G\).
		\end{proof}
	\end{lemma}

	Lemma~\ref{lem-HDUnitsCloseInIsot} is uninteresting when applied to Hausdorff groupoids; the closure of units of a Hausdorff groupoid consists only of units.
	In the non-Hausdorff setting, the effectivity condition for groupoids places a restriction on how large the closure of the unit space can be by insisting that taking the closure of units never adds new points to the interior.
	Since the non-Hausdorffness of a groupoid is detected by whether the unit space is closed (at least in cases where the unit space is Hausdorff), effectivity does place some restriction on how `badly' the groupoids we consider can fail to be Hausdorff.

	The Weyl groupoid is effective by construction; this is shown in \cite[Proposition~3.3]{Renault_Cartan} and \cite[Corollary~3.2.7]{Raad_Phd}.
	We provide a quick proof here for convenience.
	
	\begin{lemma}\label{lem-weylGrpdIsEffective}
		The Weyl groupoid \(G(A,B)\) is effective.
		\begin{proof}
			It suffices to consider basic open sets of the form \(U_n\) for \(n\in N(A,B)\) as given in (\ref{eq-UnBisections}) since such open sets form a base for the topology on \(G(A,B)\), so suppose \(U_n\) is contained in isotropy for some \(n\in N(A,B)\).
			Then for all \(x\in\dom(n)\) we have
			\[
				\alpha_n(x)=r[\alpha_n,x]=s[\alpha_n,x]=x,
			\]
			whereby \(\alpha_n=\Id_{\dom(n)}\).
			The germ relation then gives \([\alpha_n,x]=[\Id_{\dom(n)},x]\in G(A,B)^{(0)}\) for all \(x\in\dom(n)\), whereby \(U_n\subseteq G(A,B)^{(0)}\), and \(G(A,B)\) is effective.
		\end{proof}
	\end{lemma}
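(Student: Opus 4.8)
The plan is to exploit the fact that $G(A,B)$ is \'etale with topology generated by the basic open bisections $U_n := \{[\alpha_n,x] : x \in \dom(n)\}$ for $n \in N(A,B)$. The interior $\operatorname{Iso}(G(A,B))^{\circ}$ of the isotropy bundle is open, hence a union of such $U_n$; since it always contains the unit space, effectiveness is equivalent to the assertion that every $U_n$ which happens to lie inside the isotropy is in fact contained in $G(A,B)^{(0)}$. So I would fix $n \in N(A,B)$ with $U_n \subseteq \operatorname{Iso}(G(A,B))$ and aim to show $U_n \subseteq G(A,B)^{(0)}$.

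The first step is to translate this hypothesis using the groupoid structure on germs. Identifying $G(A,B)^{(0)}$ with $X$, one has $s[\alpha_n,x] = x$ and $r[\alpha_n,x] = \alpha_n(x)$ for every $x \in \dom(n)$. The condition $U_n \subseteq \operatorname{Iso}(G(A,B))$ says that $r$ and $s$ agree on $U_n$, i.e. $\alpha_n(x) = x$ for all $x \in \dom(n)$, which is to say $\alpha_n = \Id_{\dom(n)}$.

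Finally I would feed this back into the germ equivalence relation: if $\alpha_n = \Id_{\dom(n)}$, then $\alpha_n$ agrees with $\Id_{\dom(n)}$ on the open neighbourhood $\dom(n)$ of each of its points, so $[\alpha_n,x] = [\Id_{\dom(n)},x] \in G(A,B)^{(0)}$ for every $x \in \dom(n)$; hence $U_n \subseteq G(A,B)^{(0)}$, as desired. I do not expect any genuine obstacle here: the argument is a direct unwinding of the definitions of the Weyl pseudogroup, of the Weyl groupoid as its groupoid of germs, and of the source and range maps, with the only mild subtlety being the reduction to basic open sets, which is immediate once one recalls that the $U_n$ form a base for the topology on $G(A,B)$.
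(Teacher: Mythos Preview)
Your proof is correct and follows essentially the same approach as the paper: reduce to basic open sets $U_n$, observe that $U_n\subseteq\operatorname{Iso}(G(A,B))$ forces $\alpha_n=\Id_{\dom(n)}$ via the range and source maps, and then invoke the germ relation to conclude $U_n\subseteq G(A,B)^{(0)}$.
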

	
	Kwa\'sniewski and Meyer define the essential (twisted) groupoid $C^*$-algebra as follows (see \cite[Section~7.2]{KwasMeyer_EssCrossProd}).
	Let $(G,\Sigma)$ be a twist over an \'etale groupoid with locally compact Hausdorff unit space.
	The canonical line bundle \(L=\frac{\Sigma\times\CC}{\TT}\) associated to the twist \(\Sigma\) forms a Fell line bundle over \(G\) with multiplication given by \([z,\sigma]\cdot[w,\eta]=[zw,\sigma\eta]\) and involution given by \([z,\sigma]^*=[\bar{z},\sigma^{-1}]\).

	For each open bisection $U\subseteq G$ let $C_c(U,\Sigma)$ be the vector space of compactly supported continuous sections $U\to L|_U$ of the restriction \(L|_U\) of $L$ to $U$.
	Let $\Cc_c(U,\Sigma)$ be the set of sections in $C_c(U,\Sigma)$, extended by zero to sections $G\to L$.
	That is, an element $f\in\Cc_c(U,\Sigma)$ is a section $G\to L$ with compact support contained in $U$, and the restriction $f|_U$ is continuous on $U$.
	Define 
	\begin{equation}\label{eq-CcG}
		\Cc_c(G,\Sigma)=\sspan_{U\subseteq G}\Cc_c(U,\Sigma),
	\end{equation}
	where $U$ ranges over open bisections of $G$.
	The space $\Cc_c(G,\Sigma)$ carries a convolution product and involution given by
	$$f\ast g(\gamma):=\sum_{r(\gamma)=r(\eta)}f(\eta)\cdot g(\eta^{-1}\gamma),\quad f^*(\gamma)=\overline{f(\gamma^{-1})},$$
	for all $f,g\in\Cc_c(G,\Sigma)$ and $\gamma\in G$. 
	Note that the sum in this convolution product is finite since the functions $f$ and $g$ have compact support, and so have finite support over each $r$-fibre in $G$.
	The \emph{full twisted groupoid $C^*$-algebra} $C^*(G,\Sigma)$ is defined as the maximal $C^*$-completion of the ${}^*$-algebra $\Cc_c(G,\Sigma)$.
	
	The reduced \(C^*\)-algebra of the twist \((G,\Sigma)\) admits multiple equivalent constructions.
	Following Bice \cite{Bice_Sections}, we may define the reduced \(C^*\)-algebra of the twist \((G,\Sigma)\) as the completion of \(\Cc_c(G,\Sigma)\) within the space of all sections of \(L\) under the appropriate reduced norm.
	While the space of all sections of \(L\) is not a \(C^*\)-algebra, Bice shows that the closure of \(\Cc_c(G,\Sigma)\) under an approriate norm (the \(\mathrm{b}\)-norm) is a \(C^*\)-algebra, and moreover, that the formulae for the convolution product and involution given above extend to the closure.
	Bice shows that this construction is consistent with the definitions of reduced twisted groupoid \(C^*\)-algebras considered in other parts of the literature (e.g. \cite{KwasMeyer_EssCrossProd}).

	The traditional definition of the reduced \(C^*\)-algebra of a twist \((G,\Sigma)\) is as a completion of \(\Cc_c(G,\Sigma)\) in a suitable \(C^*\)-norm.
	As a result, elements of the reduced \(C^*\)-algebra are not, a priori, sections of the associated line bundle \(L\), and so the product in \(C^*_\red(G,\Sigma)\) does not immediately admit a convolution formula like the one above.
	This obstruction is typically circumvented by observing that there is a canonical ``\(j\)-map'', which is a contractive linear embedding of the reduced \(C^*\)-algebra into the space of Borel sections of the line bundle, and then proving that the convolution formula makes sense and describes the product in \(C^*_\red(G,\Sigma)\).

	Renault showed \cite[4.2~Proposition]{Renault_GrpdApproach} that the convolution product and involution on \(\Cc_c(G,\Sigma)\) extend to the closure of \(\Cc_c(G,\Sigma)\) under the \(j\)-map when the groupoid is second-countable and Hausdorff, and the twist is implemented by a 2-cocycle \(G\to\TT\).
	Generalisations of this followed, for example, Brown, Fuller, Pitts, and Reznikoff demonstrated that the convolution formula holds for the closure of the image of \(\Cc_c(G,\Sigma)\) under the \(j\)-map if the groupoid \(G\) is Hausdorff \cite[Proposition~2.8]{BrownFullerPittsReznikoff_Graded}, not requiring second-countability or that the twist is implemented by a cocycle.
	Further still, Bardadyn, Kwa\'sniewski, and McKee demonstrate that the convolution formula holds for non-Hausdorff groupoids \cite[Proposition~3.15]{BardKwasMckee_BanachAlgGrpd} for more general Banach-algebraic completions of \(\Cc_c(G,\Sigma)\).

	The advantages of Bice's framework are twofold.
	Firstly, it works in sufficient generality to cover the non-Hausdorff groupoids and twists with which we are concerned in this article.
	Secondly, the \(j\)-map is avoided by constructing the reduced \(C^*\)-algebra as the closure inside an ambient Banach space of sections.

	Elements of the reduced \(C^*\)-algebra of a twist \((G,\Sigma)\) have also been described as functions \(\Sigma\to\CC\) which are \emph{\(\TT\)-contravariant}, that is, \(f(z\sigma)=\bar{z}f(\sigma)\) for all \(z\in\TT\) and \(\sigma\in\Sigma\) (see e.g. \cite[Section~4]{Renault_Cartan} or \cite[Section~2.3]{ArmstrongEtAl_LocalBisHyp}).
	This correspondence sends a section \(f\in C^*_\red(G,\Sigma)\) of the line bundle \(L=\frac{\CC\times\Sigma}{\TT}\) to the function \(\Sigma\to\CC\) given by 
	\[\sigma\mapsto f(p(\sigma)),\]
	where \(p\colon\Sigma\to G\) is the canonical morphism in the data of the twist.
	Indeed, this function is \(\TT\)-contravariant since necessarily \(f(p(\sigma))=[\lambda,\sigma]\) for some \(\lambda\in\CC\), so \(\bar z f(p(\sigma))=\bar z[\lambda,\sigma]=[\bar z\lambda,\sigma]=[\lambda,z\sigma]\). 
	We shall often identify elements of the \(C^*_\red(G,\Sigma)\) with their corresponding functions on \(\Sigma\) in this way.

	\subsection{The essential \(C^*\)-algebra} 

	The reduced \(C^*\)-algebra of a twist over a non-Hausdorff groupoid typically contains discontinuous sections which detect the failure of the groupoid to be Hausdorff.
	A section in \(\Cc_c(G,\Sigma)\) only detects a topologically `small' amount of non-Hausdorffness, since it is a finite sum of functions that are continuous on their open supports, so the only problems that can occur is at the boundaries of these supports.
	Such sets (i.e. boundaries of open sets) are \emph{nowhere dense}, that is, their closure has empty interior.
	Since generic elements of \(C^*_\red(G,\Sigma)\) are limits of such elements, the collection of points where they are discontinuous is contained in a countable union of nowhere dense sets (see \cite[Lemma~7.13]{KwasMeyer_EssCrossProd}).
	Countable unions of nowhere dense sets are called \emph{meagre}.

	The desired quotient of $C^*_\red(G,\Sigma)$ is obtained by identifying sections whose difference has meagre support.
	This ensures that the values of a section in \(C^*_\red(G,\Sigma)\) at its discontinuities is discarded while retaining enough information to preserve the `continuous' parts of the section.
	Exel and Pitts \cite{ExelPitts_AlgNonHDGrpds} first defined the \emph{essential} \(C^*\)-algebra associated to a twist over a second-countable groupoid with sufficiently small isotropy, and Kwa\'sniewski and Meyer \cite{KwasMeyer_EssCrossProd} generalised this to all twists over \'etale groupoids.
	
	\begin{definition}[{\cite[Section~3.5]{Bice_Sections}, cf. \cite[Proposition~7.18]{KwasMeyer_EssCrossProd}}]
		The \emph{essential twisted groupoid $C^*$-algebra} $C^*_\ess(G,\Sigma)$ is defined as the quotient of $C^*_\red(G,\Sigma)$ by the ideal 
		\[J_\mathrm{sing}:=\{f\in C^*_\red(G,\Sigma):f|_{G\setminus M}\equiv 0\text{ for a meagre set }M\subseteq G\}.\]
	\end{definition}

	\begin{remark}\label{rem-singularIdealMeagreSupp}
		Kwa\'sniewski and Meyer define the singular ideal a little more broadly, and show that the singular ideal \(J_{\mathrm{sing}}\) consists of functions which are non-zero on a meagre subset of the groupoid if the groupoid can be covered by countably many bisections (see \cite[Proposition~7.18]{KwasMeyer_EssCrossProd}).
		This condition is made moot by observing that we can always pass to an open subgroupoid which is covered by countably many open bisections and contains the support of a given fixed section.
		If \(H\subseteq G\) is this open subgroupoid there is a canonical inclusion \(C^*_\red(H,\Sigma|_H)\hookrightarrow C^*_\red(G,\Sigma)\) taking a section of the restricted bundle \(\Sigma|_H\) and extending it by zero to an element of \(\Cc_c(G,\Sigma)\) (recall (\ref{eq-CcG})).
		It follows that if a section has support contained in \(H\) then it belongs to the singular ideal for \(G\) if and only if it belongs to the singular ideal for \(H\).
		The support of any fixed element of \(C^*_\red(G,\Sigma)\) is contained in the union of countably many open bisections, and hence the open subgroupoid these bisections generate is covered by countably many bisections.
		By restricting to this open subgroupoid, we see that \(f\) is singular if and only if it is singular in the reduced \(C^*\)-algebra of this open subgroupoid, and \cite[Proposition~7.18]{KwasMeyer_EssCrossProd} shows that this is equivalent to \(f\) being non-zero on a meagre subset of the groupoid.
		This also shows that the inclusion \(C^*_\red(H,\Sigma|_H)\hookrightarrow C^*_\red(G,\Sigma)\) descends to an inclusion \(C^*_\ess(H,\Sigma|_H)\hookrightarrow C^*_\ess(G,\Sigma)\) of essential \(C^*\)-algebras.
	\end{remark}
	
	\begin{remark}
		If \(G\) is Hausdorff then the singular ideal \(J_{\mathrm{sing}}\) is trivial, and so the essential \(C^*\)-algebra \(C^*_\ess(G,\Sigma)\) is just the reduced \(C^*\)-algebra \(C^*_\red(G,\Sigma)\).
		Specifically, for an open bisection \(U\subseteq G\) the (relatively closed) support of a local section \(f\in C_c(U,\Sigma)\) is a compact subset of \(U\), and hence is also compact in \(G\).
		If \(G\) is Hausdorff then this compact subset is also closed in \(G\), so \(f\) extends to a globally continuous section \(G\to L\) by taking zero values outside of \(U\).
		Hence every linear combination of such sections is continuous, and so the definition \ref{eq-CcG} of \(\Cc_c(G,\Sigma)\) shows that every element of \(\Cc_c(G,\Sigma)\) is a continuous compactly supported section.
		A partition of unity argument shows that every compactly supported continuous section of \(L\) can be written as a finite sum of sections supported on bisections of \(G\), so we see that \(\Cc_c(G,\Sigma)\) and \(C_c(G,\Sigma)\) coincide.
		The reduced norm dominates the supremum norm (see e.g. \cite[Proposition~7.10]{KwasMeyer_EssCrossProd}), so the closure of \(C_c(G,\Sigma)\) in the reduced norm consists of continuous sections.
		Finally, we note that a continuous function on \(G\) with meagre support must be globally zero.
	\end{remark}

	\subsection{Aperiodic and Masa inclusions}
	
	The notion of aperiodicity for $C^*$-inclusions was first introduced by Kwa\'sniewski and Meyer in \cite{KwasMeyer_EssCrossProd}, inspired by the work of Kishimoto \cite{Kish_OuterAuts}.
	This condition implies several useful properties such as the generalised intersection property and uniqueness of pseudo-expectation (see \cite{KwasMeyer_EssCrossProd},\cite{KwasMeyer_PseudoExp}).
	The definition of aperiodicity for bimodules was given in \cite[Definition~4.1]{KwasMeyer_AperTopoFree} and we shall recall it here.
	Since we are considering inclusions of commutative $C^*$-algebras and their slices, aperiodicity is equivalent to both topological non-triviality and pure outerness for Hilbert bimodules (see \cite[Theorem~8.1]{KwasMeyer_AperTopoFree}).
	
	Throughout the later parts of this article we shall consider inclusions \(A\subseteq B\) of \(C^*\)-algebras, where \(A=C_0(X)\) is a maximal commutative subalgebra of \(B\).
	We call such a subalgebra a \emph{masa} (acronym for `maximal abelian self-adjoint subalgebra'), and refer to inclusions of masas as \emph{masa inclusions}.
	In our analysis of masa inclusions we shall use results from \cite{KwasMeyer_EssCrossProd}, \cite{KwasMeyer_PseudoExp}, \cite{KwasMeyer_AperTopoFree} regarding aperiodic bimodules and inclusions.
	We give the following definition of aperiodicity from \cite{KwasMeyer_AperTopoFree} as stated there, not assuming that the \(C^*\)-algebra \(A\) is commutative.
	
	\begin{definition}[{\cite{KwasMeyer_AperTopoFree}, \cite[Definitions~5.9,5.14]{KwasMeyer_EssCrossProd}}]
		Let \(A\) be a \(C^*\)-algebra.
		A Banach $A$-bimodule $W$ is \emph{aperiodic} if for all $\eps>0$, non-zero hereditary subalgebras $D\subseteq A$, and elements $w\in W$ there is $a\in D$ with $a\geq 0$, $||a||=1$ such that
		$$||awa||<\eps.$$
		An inclusion $A\subseteq B$ of $C^*$-algebras is \emph{aperiodic} if the Banach $A$-bimodule $B/A$ is an aperiodic bimodule.
	\end{definition}

	We will show that all regular masa inclusions are aperiodic (see Corollary~\ref{cor-masaIncIsAper} below); the following technical results are in service of this goal.
	This will imply that the conditional expectations we consider later in the article will be unique, and there is also a unique largest ideal of the ambient algebra which has trivial intersection with the masa (see \cite[Theorem~3.6]{KwasMeyer_PseudoExp}).

	\begin{lemma}\label{lem-commutesWithEssential}
		Let \(A\subseteq B\) be a non-degenerate inclusion of \(C^*\)-algebras and let \(I\triangleleft A\) be an essential ideal.
		An element \(b\in B\) commutes with \(A\) if and only if it commutes with \(I\).
		\begin{proof}
			If \(b\in B\) commutes with \(A\) then it certainly commutes with \(I\), so we prove the other implication.
			Suppose \(ba=ab\) for all \(a\in I\).
			For all \(c\in A\) and \(a\in I\) we have
			\((bc-cb)a=b(ca)-c(ba)=b(ca)-(ca)b=0\) since \(ca\in I\) and \(b\) commutes with \(I\).
			Hence \(bc-cb\) annihilates \(I\), whereby \(bc-cb=0\) since \(I\) is an essential ideal in \(A\) and \(A\subseteq B\) is a non-degenerate inclusion.
		\end{proof}
	\end{lemma}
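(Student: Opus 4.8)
The plan splits along the two implications. One direction is immediate: if $b$ commutes with all of $A$, then in particular it commutes with the subset $I$, so nothing needs to be done. For the converse I would assume $ba=ab$ for every $a\in I$ and aim to deduce $bc=cb$ for every $c\in A$. My strategy is to show first that each commutator $[b,c]:=bc-cb$ annihilates $I$ from both sides, and then to bootstrap this to $[b,c]=0$ using that $I$ is essential in $A$ and that the inclusion $A\subseteq B$ is non-degenerate.

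For the first step I would simply compute. Fix $c\in A$ and $a\in I$. Since $I$ is an ideal of $A$, both $ca$ and $ac$ lie in $I$, so $b$ commutes with each of them. Then
\[
(bc-cb)a=b(ca)-c(ba)=(ca)b-c(ab)=cab-cab=0,
\]
and symmetrically, using $ab=ba$ together with the fact that $b$ commutes with $ac\in I$,
\[
a(bc-cb)=abc-acb=bac-b(ac)=bac-bac=0.
\]
Hence $[b,c]\,I=I\,[b,c]=0$.

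For the second step I would argue that an element $x\in B$ which annihilates the essential ideal $I$ of $A$ (on both sides) must itself vanish, given that $A\subseteq B$ is non-degenerate. The mechanism is that non-degeneracy provides an approximate unit for $B$ sitting inside $A$, which lets one recover $x$ from its products with elements of $A$; combining this with $xI=Ix=0$ and with the triviality of the annihilator of $I$ inside $A$ (which is exactly what essentiality says) forces $x$ to annihilate all of $A$, whence $x=0$. Applying this to $x=[b,c]$ gives $bc=cb$ for every $c\in A$, completing the argument.

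I expect the first step to be entirely routine, a short manipulation of the ideal property against the hypothesis on $b$. The second step is where the genuine content lies: passing from ``kills the essential ideal $I\triangleleft A$'' to ``kills $A$'' is precisely the place where \emph{both} hypotheses — essentiality of $I$ in $A$ and non-degeneracy of $A\subseteq B$ — must be used, and making this reduction fully rigorous (in particular, correctly exploiting that $I$ is essential rather than merely dense in the relevant sense) is the main obstacle I would want to pin down.
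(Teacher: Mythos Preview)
Your approach is essentially identical to the paper's. Both verify by direct computation that the commutator $bc-cb$ right-annihilates $I$ (you additionally check left annihilation, which is harmless), and then assert that essentiality of $I\triangleleft A$ together with non-degeneracy of $A\subseteq B$ forces $bc-cb=0$. The paper spells out this last implication no more than your sketch does.

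Your instinct to flag that second step as the real obstacle is well placed: the implication ``$x\in B$ with $xI=Ix=0$ implies $x=0$'' need not hold under the stated hypotheses alone. For a concrete instance, let $B=C[0,1]\oplus M_2(\mathbb{C})$, embed $A\cong C[0,1]$ via $f\mapsto\bigl(f,\operatorname{diag}(f(0),f(1))\bigr)$, and let $I$ be the image of $C_0(0,1)$. Then $A\subseteq B$ is unital (hence non-degenerate) and $I$ is essential in $A$, yet $b=(0,E_{12})$ commutes with $I$ while for $c\in A$ corresponding to the identity function one computes $bc-cb=(0,E_{12})\neq0$. Thus both your outline and the paper's proof share a gap at exactly the point you identified; some further hypothesis on the inclusion (in the paper's actual applications $A$ is maximal abelian, which excludes this example) is needed to make the annihilator argument go through.
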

	
	\begin{lemma}\label{lem-sliceCommutesIfPartialHomTriv}
		Let \(A\subseteq B\) be a non-degenerate inclusion with \(A=C_0(X)\) commutative and let \(n\in N(A,B)\) be a normaliser.
		Let $M_n:=\clspan AnA$ be the slice of \(A\subseteq B\) generated by $n$, and let \(V\subseteq\dom(n)\) be the interior of the set of points in \(\dom(n)\) that are fixed by \(\alpha_n\).
		Then \(M_n\cdot C_0(V)\) is contained in the commutant of \(A\).
		\begin{proof}
			The set \(M_n\) forms a slice since \(n\) is a normaliser of \(A\).
			Fix \(m\in M_n\cdot C_0(V)\).
			Let \(W:=(X\setminus \dom(m))^\circ\) be the interior of the complement of \(\dom(m)\) in \(X\).
			Then \(\dom(m)\cup W\) is a dense open subset of \(X\) so \(C_0(\dom(m)\cup W)\) is an essential ideal of \(C_0(X)\).
			By Lemma~\ref{lem-commutesWithEssential} it suffices to show that \(m\) commutes with the ideal \(C_0(\dom(m)\cup W)\).
			Since \(\dom(m)\cap W=\emptyset\) we have \(C_0(\dom(m)\cup W)=C_0(\dom(m))\oplus C_0(W)\), so we shall show that \(m\) commutes with the summand ideals.
			For \(f\in C_0(\dom(m))\) we have
			\begin{align*}
				||mf-fm||^2&=||(mf-fm)^*(mf-fm)||\\
				&=\sup_{x\in X}|f^*m^*mf(x)-m^*f^*mf(x)-f^*m^*fm(x)+m^*f^*fm(x)|,
			\end{align*}
			noting that each of the terms in the expansion on the right belongs to \(C_0(X)\) since \(m\) is a normaliser.
			If \(x\in\dom(m)\subseteq V\) we have \(m^*fm(x)=m^*mf(x)\) as \(\alpha_m(x)=x\) by Corollary~\ref{cor-partialHomeoSameForASlice}, and so
			\begin{align*}
				(mf-fm)^*(mf-fm)(x)&=f^*m^*mf(x)-m^*f^*mf(x)-f^*m^*fm(x)+m^*f^*fm(x)\\
				&=2f^*f(x)m^*m(x)-2f^*f(x)m^*m(x)\\
				&=0.
			\end{align*}
			Otherwise, if \(x\notin\dom(m)\) we have
			\begin{align*}
				(mf-fm)^*(mf-fm)(x)&=f^*m^*mf(x)-m^*f^*mf(x)-f^*m^*fm(x)+m^*f^*fm(x)\\
				&=m^*f^*fm(x)
			\end{align*}
			since \(f(x)=0\).
			By the Cohen-Hewitt factorisation theorem, we may write \(m=m'g\) for some \(m'\in M_n\) and \(g\in C_0(\dom(m))\), noting that \(\dom(m)\subseteq V\).
			We then have \(m^*f^*fm(x)=g^*(x)(m'^*f^*fm')(x)g(x)=0\) since \(x\notin \dom(m)\).
			Hence \(m\) commutes with \(f\), so \(m\) lies in the commutant of \(C_0(\dom(m))\).
			
			If \(f\in C_0(W)\) then, using the above Cohen-Hewitt factorisation again, we have \(mf=m'gf=0\) since \(\dom(m)\cap W=\emptyset\).
			It remains to show that \(fm=0\).
			Since \(||fm||^2=||m^*f^*fm||\) we aim to show that \(m^*f^*fm(x)=0\) for all \(x\in\dom(m)\cup W\).
			For \(x\in\dom(m)\) we have
			\[m^*f^*fm(x)=|f(\alpha_m(x))|^2m^*m(x)=|f(x)|^2m^*m(x),\]
			since \(\alpha_m=\alpha_n|_{\dom(m)}=\Id_{\dom(m)}\) by Corollary~\ref{cor-partialHomeoSameForASlice} and \(\dom(m)\) is contained in the set of points fixed by \(\alpha_n\).
			Since the support of \(f\) is contained in \(W\) which is disjoint from \(\dom(m)\), we see in this case that \(m^*f^*fm(x)=0\).
			Now suppose that \(x\notin\dom(m)\).
			Using the Cohen-Hewitt factorisation from above, we have
			\[m^*f^*fm(x)=g^*m'^*f^*fm'g(x)=|g(x)|^2m'^*f^*fm'(x)=0,\]
			since \(g\in C_0(\dom(m))\).
			Hence \(||fm||^2=||f^*m^*mf||=0\) whereby \(mf=0=fm\) as required.
		\end{proof}
	\end{lemma}
	
	The proof of the following Lemma~\ref{lem-masaIncHasAperAction} will show that a slice $M$ with trivial intersection with the subalgebra is \emph{topologically non-trivial}, that is, the induced partial homeomorphism $\widehat{s(M)}\to\widehat{r(M)}$ of the spectrum of $A$ does not restrict to the identity map on any non-empty open subset of $\widehat{s(M)}$.
	Recall here that \(s(M)\) and \(r(M)\) denote the ideals generated by \(M^*M\) and \(MM^*\) respectively.
	For \(M=M_n\) where \(n\) is a normaliser, these ideals are exactly those generated by \(n^*n\) and \(nn^*\) in \(A\).
	In general this may be stronger than the bimodule $M$ being aperiodic (see \cite[Theorem~4.7]{KwasMeyer_PseudoExp}), but for bimodules over Type~I $C^*$-algebras (in particular, commutative $C^*$-algebras) these conditions are equivalent by \cite[Theorem~8.1]{KwasMeyer_AperTopoFree}.

	We also add the following observation before proceeding with further technical lemmata: a slice for the inclusion \(A\subseteq B\) which is contained in \(A\) is exactly an ideal of \(A\).
	Moreover, since the intersection of two slices is again a slice, and since \(A\) is always itself a slice, the intersection of a generic slice with \(A\) picks out the `trivial' part.
	Thus, for a slice \(M\), the subspace \(M\cap A\) is an ideal of \(A\) representing the trivial part, and so the non-trivial part of \(M\) should be (at least mostly) represented by the orthogonal complement of \(M\cap A\) in \(M\), viewing it as a Hilbert bimodule.
	This can be realised by multiplying \(M\) by the annihilator of the source ideal of \(M\cap A\), but since \(M\cap A\) is its own source ideal, this reduces to \(M\cdot (M\cap A)^\perp\), where \((M\cap A)^\perp\) denotes the annihilator ideal of \(M\cap A\) in \(A\).

	\begin{lemma}\label{lem-masaIncHasAperAction}
		Let $A=C_0(X)\subseteq B$ be a regular masa inclusion and let $n\in N(A,B)$ be a normaliser.
		Let $M_n:=\clspan AnA$ be the slice of \(A\subseteq B\) generated by $n$.
		Then the bimodule $M_n\cdot(M_n\cap A)^\perp$ is aperiodic.
		\begin{proof}
			Note first that the inclusion is non-degenerate by \cite[Theorem~2.6]{Pitts_NormalisersApproxIds}.
			Let $V\subseteq\dom(n)$ be the interior of the set of points $x\in\dom(n)$ with $\alpha_n(x)=x$.
			By Lemma~\ref{lem-sliceCommutesIfPartialHomTriv} we see that $M_n\cdot C_0(V)$ commutes with $A$, whereby \(M_n\cdot C_0(V)\subseteq A\) since \(A\) is maximal commutative.
			A slice for the inclusion \(A\subseteq B\) which is contained in \(A\) is exactly an ideal of \(A\), so we have
			\[M_n\cdot C_0(V)=(M_n\cdot C_0(V))^*(M_n\cdot C_0(V))=C_0(V)\cap s(M_n).\]
			Since \(C_0(\dom(n))\) contains \(n^*n\), we observe \(C_0(V)\subseteq C_0(\dom(n))=s(M_n)\), so we have \(M_n\cdot C_0(V)=M_n\cap A\).
			Conversely for any $a\in M_n\cap A$ we have $\alpha_a=\Id_{\dom(a)}$ and \(\alpha_n\) extends \(\alpha_a\) by Corollary~\ref{cor-partialHomeoSameForASlice}, whereby $\dom(a)\subseteq V$ and $a\in C_0(V)$.
			Thus $C_0(V)=M_n\cap A$ and so $(M_n\cap A)^\perp = C_0(V)^\perp=C_0(X\setminus\overline{V})$.
			The partial homeomorphism \(\alpha_n\) restricts to a topologically non-trivial map on \(X\setminus\overline {V}\) which shows that \(M_n\cdot C_0(X\setminus \overline{V})\) is a topologically non-trivial bimodule.
			It follows from \cite[Theorem~4.7]{KwasMeyer_PseudoExp} that \(M_n\cdot (M\cap A)^\perp=M_n\cdot C_0(X\setminus\overline{V})\) is aperiodic.
		\end{proof}
	\end{lemma}
	
	\begin{lemma}\label{lem-aperSpanAperInc}
		Let $A\subseteq B$ be a non-degenerate inclusion of $C^*$-algebras.
		If $B$ is densely spanned by slices $W\subseteq B$ with the property that $W\cdot (W\cap A)^\perp$ is an aperiodic $A$-bimodule then the inclusion $A\subseteq B$ is aperiodic.
		\begin{proof}
			The slices $W$ densely span $B$, so slices $W/(W\cap A)$ have densely spanning image in the $A$-bimodule $B/A$.
			The subbimodule $W/(W\cap A)\cdot ((W\cap A)\oplus(W\cap A)^\perp)\cong W\cdot (W\cap A)^\perp$ is then aperiodic by assumption, and so \cite[Lemma~5.12]{KwasMeyer_EssCrossProd} implies that $W/(W\cap A)$ is aperiodic.
			The set of points satisfying Kishimoto's condition is a closed subspace of $B/A$ by \cite[Lemma~4.2]{KwasMeyer_AperTopoFree}, hence the inclusion $A\subseteq B$ is aperiodic.
		\end{proof}
	\end{lemma}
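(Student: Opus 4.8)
The plan is to deduce that the Banach $A$-bimodule $B/A$ is aperiodic from the aperiodicity of the bimodules $W\cdot(W\cap A)^\perp$ assumed in the hypothesis, using two inputs. By \cite[Lemma~4.2]{KM4} the set of elements $x$ of a Banach $A$-bimodule for which Kishimoto's condition holds --- that for every $\eps>0$ and every non-zero hereditary subalgebra $D\subseteq A$ there is a positive norm-one $a\in D$ with $\|axa\|<\eps$ --- is a closed linear subspace. I would use three consequences of this: aperiodicity may be checked on a dense spanning set; a sum of aperiodic sub-bimodules is aperiodic; and the image of an aperiodic bimodule under a bounded surjective bimodule map $\phi$ is aperiodic, since $a\phi(x)a=\phi(axa)$ and hence $\|a\phi(x)a\|\le\|\phi\|\,\|axa\|$ can be made arbitrarily small. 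The second input is \cite[Lemma~5.12]{KM2}: a Banach $A$-bimodule $X$ is aperiodic whenever its restriction $X\cdot J$ to some essential ideal $J\triangleleft A$ is aperiodic.

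First I would record that for any slice $W$ the intersection $W\cap A$ is a closed ideal of $A$, hence a $C^*$-subalgebra: if $w\in W\cap A$ and $a\in A$, then $aw$ and $wa$ lie in $AW+WA\subseteq W$ and also in $A$. Consequently $J_W:=(W\cap A)\oplus(W\cap A)^\perp$ is an essential ideal of $A$. Since the slices $W$ of the given spanning family densely span $B$ and the quotient map $q\colon B\to B/A$ is a bounded surjection sending each $W$ onto a sub-bimodule $q(W)$ of $B/A$, the $q(W)$ densely span $B/A$; by the first input it therefore suffices to prove that every $q(W)$ is aperiodic.

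To this end I would compute $q(W)\cdot J_W$. The one algebraic fact required is $W\cdot(W\cap A)\subseteq A$: for $0\le a\in W\cap A$ we have $a^{1/2}\in W\cap A$, so $wa^{1/2}=w(a^{1/2})^*\in WW^*\subseteq A$ and hence $wa^{1/2}\in W\cap A$, which gives $wa=(wa^{1/2})a^{1/2}\in W\cap A$; by linearity $W\cdot(W\cap A)\subseteq W\cap A\subseteq A$. Thus $q$ annihilates $W\cdot(W\cap A)$, and writing $J_W$ as the direct sum of the mutually orthogonal ideals $W\cap A$ and $(W\cap A)^\perp$ we obtain $q(W)\cdot J_W=q(W)\cdot(W\cap A)^\perp=q\bigl(W\cdot(W\cap A)^\perp\bigr)$. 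This last space is the image of the aperiodic bimodule $W\cdot(W\cap A)^\perp$ under the bounded surjective bimodule map $q$, so it is aperiodic. Applying \cite[Lemma~5.12]{KM2} with the essential ideal $J_W$ shows that $q(W)$ is aperiodic, and running this over all $W$ in the spanning family and using \cite[Lemma~4.2]{KM4} once more yields that $B/A$ is aperiodic, i.e.\ that $A\subseteq B$ is an aperiodic inclusion.

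I expect the only points needing care to be the verification of $W\cdot(W\cap A)\subseteq A$ and the matching of the precise hypotheses of \cite[Lemma~5.12]{KM2}. The complementary fact $W\cdot(W\cap A)^\perp\cap A=\{0\}$ --- if $0\le a\in(W\cap A)^\perp$ and $wa\in A$ then $wa\in W\cap A$, so $wa\cdot a=wa^2=0$ and therefore $\|wa\|^2=\|wa^2w^*\|=0$ --- is not strictly needed above, but it identifies $q(W)\cdot J_W$ with $W\cdot(W\cap A)^\perp$ as Banach $A$-bimodules, which is how one might prefer to phrase the key step. Apart from these, the proof is a formal assembly of the permanence properties of aperiodicity recalled in the first paragraph.
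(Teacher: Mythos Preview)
Your proof is correct and follows essentially the same route as the paper's: reduce to showing each $q(W)\cong W/(W\cap A)$ is aperiodic, identify $q(W)\cdot\bigl((W\cap A)\oplus(W\cap A)^\perp\bigr)$ with $W\cdot(W\cap A)^\perp$, invoke \cite[Lemma~5.12]{KM2} for the essential ideal, and finish with \cite[Lemma~4.2]{KM4}. The only difference is that you supply justifications the paper leaves implicit---that $W\cap A$ is an ideal, that $W\cdot(W\cap A)\subseteq A$, and that bounded bimodule images of aperiodic bimodules are aperiodic---so your write-up is in fact more complete than the original.
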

	
	\begin{corollary}\label{cor-masaIncIsAper}
		Let $A\subseteq B$ be a regular masa inclusion.
		Then $A\subseteq B$ is an aperiodic inclusion.
		\begin{proof}
			Combine Lemmata~\ref{lem-masaIncHasAperAction} and \ref{lem-aperSpanAperInc}.
		\end{proof}
	\end{corollary}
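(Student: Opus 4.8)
The plan is to verify the two hypotheses of Lemma~\ref{lem-aperSpanAperInc} for the inclusion $A\subseteq B$ --- namely non-degeneracy, and the existence of a spanning family of slices $W$ with $W\cdot(W\cap A)^\perp$ aperiodic --- and then invoke that lemma directly. For non-degeneracy, I would simply recall that a (regular) masa inclusion is automatically non-degenerate, by \cite[Theorem~2.6]{P1}, as was already used in the proof of Lemma~\ref{lem-masaIncHasAperAction}; this also supplies the approximate unit $(e_\lambda)$ for $B$ contained in $A$ that is needed below.

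Next I would produce the spanning family of slices. For each normaliser $n\in N(A,B)$ set $M_n:=\clspan AnA$. One checks $M_n$ is a slice in the sense fixed before Corollary~\ref{cor-partialHomeoSameForASlice}: it is visibly a closed $A$-subbimodule, and $M_n^*M_n+M_nM_n^*\subseteq A$ because for $a,b,c,d\in A$ the products $(anb)^*(cnd)=b^*n^*a^*cnd$ and $(anb)(cnd)^* = anbd^*c^*n^*$ lie in $A$ --- using that $n$ is a normaliser and $A$ is commutative --- together with continuity of multiplication. Since $(e_\lambda)\subseteq A$ is an approximate unit for $B$, we have $e_\lambda n e_\mu\to n$, so $n\in M_n$; hence $\sspan N(A,B)\subseteq\sspan\bigcup_{n}M_n$, and regularity gives that the slices $\{M_n:n\in N(A,B)\}$ span a dense subspace of $B$.

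Now Lemma~\ref{lem-masaIncHasAperAction} applies to each $M_n$ and gives that the $A$-bimodule $M_n\cdot(M_n\cap A)^\perp$ is aperiodic --- which is exactly the property required of $W=M_n$ in Lemma~\ref{lem-aperSpanAperInc}. Thus $B$ is densely spanned by slices of the required type, and Lemma~\ref{lem-aperSpanAperInc} yields at once that $A\subseteq B$ is an aperiodic inclusion.

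This corollary is essentially bookkeeping: the substantive content sits in Lemmata~\ref{lem-masaIncHasAperAction} and~\ref{lem-aperSpanAperInc}. The only point I would take care over is the routine verification that $M_n$ genuinely qualifies as a slice and that $n\in M_n$, both of which reduce to the normaliser identities and the non-degeneracy of the inclusion; I do not anticipate any real obstacle.
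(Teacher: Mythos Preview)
Your proposal is correct and follows exactly the route the paper intends: the paper's proof is the single line ``Combine Lemmata~\ref{lem-masaIncHasAperAction} and~\ref{lem-aperSpanAperInc}'', and you have simply unpacked this by exhibiting the $M_n$ as the spanning family of slices and checking the hypotheses of Lemma~\ref{lem-aperSpanAperInc}. The only small addition worth noting is that $M_n\subseteq N(A,B)$ follows from your computation $M_n^*M_n\subseteq A$ together with the bimodule property (for $a\geq 0$ one has $m^*am=(a^{1/2}m)^*(a^{1/2}m)\in M_n^*M_n\subseteq A$), so the slice axioms are indeed satisfied.
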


	The converse of Corollary~\ref{cor-masaIncIsAper} fails, we provide an elementary counterexample.
	\begin{example}
		Consider the inclusion \(C(\TT)\subseteq C[0,1]\) by vieweing elements of \(C(\TT)\) as continuous functions \(f\colon[0,1]\to\CC\) satisfying \(f(0)=f(1)\).
		This inclusion is unital and regular, and the quotient Banach \(C(\TT)\)-module \(C[0,1]/C(\TT)\) is isomorphic to \(W:=\CC\) via the map \(f+C(\TT)\mapsto f(1)-f(0)\).
		Note here that the \(C(\TT)\)-module structure on \(W\) is given by \(g\cdot w=w\cdot g=g(0)w\) for \(g\in C(\TT)\) and \(w\in W\).
		A non-zero hereditary subalgebra \(D\subseteq C(\TT)\) is a non-zero ideal as \(C(\TT)\) is commutative, so is of the form \(C_0(U)\) for some non-empty open \(U\subseteq \TT\).
		Pick \(g\in C_0(U)\) with \(||g||=1\) and \(g(0)=0\); this can always be done.
		Then \(g\cdot w\cdot g=g(0)^2w=0\) for all \(w\in W\), so \(W\cong C[0,1]/C(\TT)\) is aperiodic, hence the inclusion \(C(\TT)\subseteq C[0,1]\) is aperiodic.
		However, clearly \(C(\TT)\) is not a maximal commutative subalgebra.
	\end{example}

	The various \(C^*\)-algebras associated to a twist \((G,\Sigma)\) contain a common distinguished subalgebra of sections supported on the unit space \(G^{(0)}\).
	Since the twist is by definition trivial over the unit space, the corresponding line bundle is as well, and sections of the line bundle over the unit space of \(G\) correspond to continuous functions \(G^{(0)}\to\CC\); we gain an isomorphic copy of \(C_c(G^{(0)})\) in the \(C^*\)-algebras associated to the twist as the closure of \(C_c(G^{(0)})\subseteq\Cc_c(G,\Sigma)\) in the various norms.
	When \(G\) is Hausdorff and hence all sections in \(C^*_\red(G,\Sigma)\) are continuous, any section \(f\in C^*_\red(G,\Sigma)\) restricts to a continuous section \(f|_{G^{(0)}}\) of the line bundle \(L|_{G^{(0)}}\) over the unit space \(G^{(0)}\).
	Restriction of sections to the unit space (then considered as elements of \(C_0(G^{(0)})\)) forms a contractive linear map \(E:C^*_\red(G,\Sigma)\to C_0(G^{(0)})\).

	\begin{definition}[{\cite[Definition~3.1]{KwasMeyer_EssCrossProd}}]
		Let \(A\subseteq B\) be an inclusion of \(C^*\)-algebras.
		A \emph{conditional expectation} is a completely positive contractive linear map \(E:B\to A\) such that \(E|_A=\Id_A\).

		Let \(A\subseteq\tilde{A}\) be another inclusion of \(C^*\)-algebras. 
		A \emph{generalised conditional expectation} is a completely positive contractive linear map \(E:B\to\tilde{A}\) such that \(E|_A=\Id_A\).

		We say that \(E\) is \emph{faithful} if \(E(b^*b)=0\) implies \(b=0\) for all \(b\in B\).
	\end{definition}

	For an \'etale Hausdorff groupoid \(G\), there is a faithful conditional expectation \(C^*_\red(G,\Sigma)\to C_0(G^{(0)})\) given by restricting functions.
	This conditional expectation leads to an equivalent characterisation of the reduced \(C^*\)-algebra of a twist as a quotient of the full \(C^*\)-algebra in the following sense:

	\begin{proposition}[{\cite[Definition~2.10, Proposition~2.17]{KwasMeyer_NCCart}, \cite[Proposition~ 7.9]{KwasMeyer_EssCrossProd}}]
		Let \((G,\Sigma)\) be a twist over a locally compact Hausdorff \'etale groupoid.
		Restriction of sections \(C_c(G,\Sigma)\to C_0(G^{(0)})\) extends to a conditional expectation \(E:C^*(G,\Sigma)\to C_0(G^{(0)})\).
		Let \(\Nn_E\subseteq\ker(E)\) be the largest ideal of \(C^*(G,\Sigma)\) contained in the kernel of \(E\).
		Then the quotient \(C^*(G,\Sigma)/\Nn_E\) is isomorphic to the reduced \(C^*\)-algebra \(C^*_\red(G,\Sigma)\).
	\end{proposition}

	Of course, if \(G\) is not Hausdorff, the sections in \(C^*_\red(G,\Sigma)\) are not all continuous, and so the restriction map does not take values in \(C_0(G^{(0)})\).
	The essential \(C^*\)-algebra does admit a similar characterisation as the quotient of the full \(C^*\)-algebra by the largest ideal in the kernel of a generalised conditonal expectation; namely by considering restrictions of sections up to meagre support.
	For a locally compact Hausdorff space \(X\), let \(\bB(X)\) denote the algebra of bounded Borel-measurable functions \(X\to\CC\), and let \(\mM(X)\) denote the ideal of such functions that vanish on a comeagre subset of \(X\) (i.e. functions with meagre support).
	The composition \(\Cc_c(G,\Sigma)\to\bB(G^{(0)})\to\bB(G^{(0)})/\mM(G^{(0)})\) restricting a section to the unit space of \(G\) and then taking its equivalence class in the quotient \(\bB(G^{(0)})/\mM(G^{(0)})\) extends to a completely positive contractive linear map
	\begin{equation}\label{eq-ELDefn}
		EL\colon C^*(G,\Sigma)\to \bB(G^{(0)})/\mM(G^{(0)}),
	\end{equation}
	and the quotient by the largest ideal \(\Nn_{EL}\subseteq\ker(EL)\) is the essential \(C^*\)-algebra \(C^*_\ess(G,\Sigma)\) (see \cite[Definition~7.12, Proposition~7.18]{KwasMeyer_EssCrossProd}).
	This implies that the singular ideal \(J_{\mathrm{sing}}\) coincides with \(\Nn_{EL}\).
	
	By results of Dixmier \cite{Dixmier_espacesStone} and later Gonshor \cite[Theorem~1]{Gonshor_InjHulls2}, Hamana's injective hull (see \cite{Hamana_InjEnvs}) of $C_0(X)$ is isomorphic to \(\bB(X)/\mM(X)\).
	Moreover, the local multiplier algebra coincides with the injective hull for commutative \(C^*\)-algebras \cite[Theorem~1]{Frank_InjEnvsLocMultAlgs}, so we have
	\begin{equation}\label{eq-mlocBorMeag}
		\Mloc(C_0(X))\cong I(C_0(X))\cong \bB(X)/\mM(X).
	\end{equation}
	Hence the generalised conditional expectation \(EL\) which characterises the essential \(C^*\)-algebra of a twist is one taking values in the local multiplier algebra or injective hull of \(C_0(G^{(0)})\).

	\begin{definition}
		Let \(A\subseteq B\) be an inclusion of \(C^*\)-algebras.
		A \emph{local conditional expectation} or \emph{local expectation} is a generalised conditional expectation \(E:B\to\Mloc(A)\) taking values in the local multiplier algebra of \(A\).
	\end{definition}

	By the above discussion, local conditional expectations are the same as generalised conditional expectations taking values in the injective hull of \(A\).
	Such generalised expectations are also called \emph{pseudoexpectations} in the literature (e.g. \cite[Definition~3.1]{KwasMeyer_EssCrossProd} and references therein).
	The injective hull and local multiplier algebra of a \(C^*\)-algebra do not coincide in general (one example is the Calkin algebra, see \cite[Section~5]{Hamana_InjEnvs}) so these definitions are generally different. 
	Throughout this article we shall consider conditional expectations only onto commutative subalgebras, and so may freely use both terms interchangeably.

	\section{Essential Cartan subalgebras of $C^*$-algebras}

	We now define an essential Cartan pair in analogue to Renault's definition \cite[Definition~5.1]{Renault_Cartan}.
	
	\begin{definition}\label{defn-essCommCartanPair}
		Let $A\subseteq B$ be an inclusion of $C^*$-algebras with $A$ commutative.
		We say the pair $(A,B)$ is an \emph{essential Cartan pair} if the following conditions hold:
		\begin{enumerate}[leftmargin=2.5cm]
			\item[(EC1)] $A$ is a regular subalgebra of $B$;
			\item[(EC2)] $A$ is a maximal abelian subalgebra of $B$ (masa);
			\item[(EC3)] there exists a faithful local expectation $E:B\to\Mloc(A)$.
		\end{enumerate}
	\end{definition}
	
	We may also use the phrase \emph{essential Cartan inclusion} to refer to an essential Cartan pair.
	
	There are some notable differences in this definition to the definition of Renault in \cite{Renault_Cartan}.
	Firstly, the condition that $A$ is a non-degenerate subalgebra of $B$ has been shown to be redundant by Pitts \cite[Theorem~2.7]{Pitts_NormalisersApproxIds}; all regular masa inclusions are automatically non-degenerate.
	Secondly, the conditional expectation need not take values in $A$, but may take values in $\Mloc(A)$.
	Since $A\subseteq B$ is assumed to be regular and masa, Corollary~\ref{cor-masaIncIsAper} implies that $A\subseteq B$ is an aperiodic inclusion, and so $E$ is the unique pseudo-expectation for the inclusion by \cite[Proposition~8.2]{KwasMeyer_PseudoExp}.
	Lastly, we do not require that the larger \(C^*\)-algebra \(B\) is separable.
	The reconstruction theorem of Renault \cite[Theorem~5.6]{Renault_Cartan} was extended to the non-separable case by Kwa\'sniewski and Meyer in \cite{KwasMeyer_NCCart}, and also by Raad in \cite{Raad_GeneraliseRenault}.

	Throughout the rest of this section, we shall fix a \textbf{regular masa inclusion $A\subseteq B$}.

	Pitts defines \emph{virtual Cartan inclusions} as regular masa inclusions \(A\subseteq B\) such that \(A\) detects ideals in \(B\) (see \cite[Definition~1.1]{Pitts_StructRegInc1}).
	While not superficially obvious, this concept coincides with Definition~\ref{defn-essCommCartanPair}, which we show now.

	\begin{lemma}\label{lem-virtCartEquiv}
		An inclusion \(A\subseteq B\) is a virtual Cartan inclusion in the sense of Pitts \cite[Definition~1.1]{Pitts_StructRegInc1} if and only if it is an essential Cartan inclusion.
		\begin{proof}
			First suppose that \(A\subseteq B\) is a virtual Cartan inclusion.
			There is then a unique pseudoexpectation \(E\) gained by combining Corollary~\ref{cor-masaIncIsAper} and \cite[Proposition~8.2]{KwasMeyer_EssCrossProd}.
			This pseudo-expectation is symmetric since it must agree with the canonical \(\Mloc(A)\)-expectation on \(A\rtimes S\), the full crossed product arising from the canonical action of the slice inverse semigroup \(S:=\Ss(A,B)\) on \(A\).
			If \(A\) detects ideals in \(B\) then the kernel of \(E\) cannot contain any non-zero ideal, so \(E\) is faithful by \cite[Corollary~3.8]{KwasMeyer_EssCrossProd}, and so \(A\subseteq B\) fulfils condition (iii) of Definition~\ref{defn-essCommCartanPair} and so is an essential Cartan inclusion.

			Conversely, \cite[Proposition~5.8]{KwasMeyer_EssCrossProd} together with \cite[Corollary~3.8]{KwasMeyer_EssCrossProd} shows that essential Cartan inclusions must detect ideals, and hence are virtual Cartan inclusions.
		\end{proof}
	\end{lemma}

	The terminology of Pitts predates the earliest version of this article.
	To avoid potential confusion for returning readers, we keep the terminology of \emph{essential Cartan pair} in this and other articles, the first versions of which used this terminology.
	
	\subsection{The evaluation map}
	Renault defines an evaluation map $B\to C^*_\red(G,\Sigma)$ directly taking elements of $B$ to sections of the canonical line bundle associated to $\Sigma$.
	An integral part to this construction is that the conditional expectation takes values in the subalgebra $A=C_0(X)$, and so elements may be considered as functions on $X$.
	This approach breaks down in our setting: elements of $\Mloc(C_0(X))\cong\bB(X)/\mM(X)$ (recall (\ref{eq-mlocBorMeag})) are not functions on $X$, but rather equivalence classes of functions up to meagre support.
	Since singletons are (often) meagre in $X$, elements of $\Mloc(A)$ cannot be evaluated at points.
	We circumvent this problem by showing that normalisers for the inclusion $A\subseteq B$ can be represented by functions defined on a dense open subset, and so give a well-defined class in the Borel-mod-meagre setting since $X$ is a Baire space. 
	
	\begin{lemma}\label{lem-condExpPreservesSlices}
		Let $A\subseteq B$ be regular masa inclusion.
		Then the inclusion $A\subseteq B$ is aperiodic, and hence there is a unique local conditional expectation $E:B\to\Mloc(A)$, and $E(W)\subseteq M(W\cap A)$ for any slice $W\subseteq B$ of the inclusion.
		\begin{proof}
			Write \(A=C_0(X)\) and identify \(\bB(X)/\mM(X)\cong I(C_0(X))\cong\Mloc(A)\) using (\ref{eq-mlocBorMeag}).
			By the universal property of the injective hull, we may always find at least one local conditional expectation \(E\colon B\to \Mloc(A)\).
					
			By Corollary~\ref{cor-masaIncIsAper} the inclusion $A\subseteq B$ is aperiodic, so has at most one pseudoexpectation by \cite[Propositions~3.9,3.16]{KwasMeyer_PseudoExp}.
			Lemma~\ref{lem-masaIncHasAperAction} states that for any slice $W\subseteq B$, the subslice $W\cdot(W\cap A)^\perp$ is an aperiodic $A$-bimodule.
			Then \cite[Lemma~5.10]{KwasMeyer_EssCrossProd} gives that $\Mloc(A)$ contains no non-zero aperiodic bimodules, and the image of an aperiodic bimodule under a bounded bimodule map is aperiodic by \cite[Lemma~5.12]{KwasMeyer_EssCrossProd}. 
			Thus $E(W\cdot(W\cap A)^\perp)=\{0\}$.
			
			For a normaliser $n\in N(A,B)$ and a slice $W\subseteq B$ with $n\in W$, the local multiplier $E(n)$ is determined by how it acts on the essential ideal $(W\cap A)\oplus (W\cap A)^\perp$.
			Fix $a\in W\cap A$ and $a^\perp\in(W\cap A)^\perp$.
			Then $E(n)(a+a^\perp)=E(na)+E(na^\perp)$.
			Since $na\in W\cdot (W\cap A)=W\cap A\subseteq A$, we have $E(na)=na$, and since $na^\perp\in W\cdot (W\cap A)^\perp$, we have $E(na^\perp)=0$.
			Thus $E(n)$ is a multiplier on $W\cap A$, identifying $M(W\cap A)\subseteq\Mloc(W\cap A)\subseteq\Mloc(A)$ via the canonical inclusions.
		\end{proof}
	\end{lemma}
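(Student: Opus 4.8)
The plan is to treat the three assertions in turn, leaning on the aperiodicity results already assembled.

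First, aperiodicity of $A\subseteq B$ is immediate: it is exactly Corollary~\ref{cor-masaIncIsAper}, obtained by combining Lemma~\ref{lem-masaIncHasAperAction} with Lemma~\ref{lem-aperSpanAperInc}, so this clause needs only a reference. For the uniqueness clause I would observe that, since $A$ is commutative, its local multiplier algebra coincides with Hamana's injective hull by Frank's theorem \cite[Theorem~1]{F1}, so any local conditional expectation $E\colon B\to\Mloc(A)$ is in particular a pseudo-expectation; an aperiodic inclusion has at most one pseudo-expectation by \cite[Propositions~3.9,3.16]{KM3}, which settles uniqueness.

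For the slice inclusion, the plan is to first show that $E$ vanishes on the subslice $W\cdot(W\cap A)^{\perp}$ for every slice $W$, then deduce the statement. For the vanishing: this $A$-bimodule is aperiodic by Lemma~\ref{lem-masaIncHasAperAction}, its image under the bounded $A$-bimodule map $E$ is again aperiodic by \cite[Lemma~5.12]{KM2}, and $\Mloc(A)$ admits no nonzero aperiodic $A$-subbimodule by \cite[Lemma~5.10]{KM2}, forcing $E(W\cdot(W\cap A)^{\perp})=\{0\}$. Then, for $n\in W$, the local multiplier $E(n)$ is determined by its action on the essential ideal $(W\cap A)\oplus(W\cap A)^{\perp}$ of $A$. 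On $W\cap A$ it acts by $E(n)a=E(na)=na$, since $na\in W\cdot(W\cap A)=W\cap A\subseteq A$; on $(W\cap A)^{\perp}$ it acts by $E(n)a^{\perp}=E(na^{\perp})=0$, since $na^{\perp}\in W\cdot(W\cap A)^{\perp}$. Hence $E(n)$ is precisely left multiplication by $n$ on $W\cap A$, i.e.\ an element of $M(W\cap A)\subseteq\Mloc(W\cap A)\subseteq\Mloc(A)$ under the canonical inclusions associated with the ideal $W\cap A\triangleleft A$. Taking closed linear spans then yields $E(W)\subseteq M(W\cap A)$.

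I expect the only mildly delicate point to be the last bit of bookkeeping: that a local multiplier which annihilates $(W\cap A)^{\perp}$ and acts as a genuine multiplier on $W\cap A$ is canonically an element of $M(W\cap A)$. This rests on the facts that elements of $\Mloc(A)$ are determined by their restriction to any essential ideal and that the inclusions $M(W\cap A)\hookrightarrow\Mloc(W\cap A)\hookrightarrow\Mloc(A)$ are compatible; routine, but worth stating carefully.
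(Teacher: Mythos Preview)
Your proposal is correct and follows essentially the same route as the paper's proof: aperiodicity via Corollary~\ref{cor-masaIncIsAper}, uniqueness of the pseudo-expectation from \cite[Propositions~3.9,3.16]{KM3}, vanishing of $E$ on $W\cdot(W\cap A)^\perp$ from Lemma~\ref{lem-masaIncHasAperAction} together with \cite[Lemmas~5.10,5.12]{KM2}, and then the multiplier computation on the essential ideal $(W\cap A)\oplus(W\cap A)^\perp$. Your explicit mention of Frank's theorem to identify local expectations with pseudo-expectations is a helpful clarification the paper leaves implicit (it is discussed earlier in Section~3), and the final ``closed linear span'' remark is harmless since every element of a slice is already a normaliser.
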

	
	\begin{corollary}\label{cor-expOfNormaliserIsFunction}
		For each normaliser $n\in N(A,B)$ the element $E(n)\in\Mloc(A)$ is represented by a continuous bounded function on a dense open subset of $X=\hat{A}$.
		Moreover, the images of finite sums of normalisers under $E$ are represented by continuous bounded functions on dense open subsets of $X$.
		\begin{proof}
			By Lemma~\ref{lem-condExpPreservesSlices} we have that $E(n)$ belongs to $M(I)$ for some essential ideal $I\triangleleft A$.
			Essential ideals of $A=C_0(X)$ are of the form $I=C_0(U)$ for dense open subsets $U\subseteq X$.
			This, coupled with the identification of $M(C_0(U))$ with the algebra $C_\mathrm{b}(U)$ of bounded continuous functions on $U$ allows us to express $E(n)$ as a bounded continuous function on $U$.
			
			For a finite linear collection of normalisers $n_i\in N(A,B)$, we have $E(\sum_i n_i)=\sum_i E(n_i)$, where each $E(n_i)$ is a function on a dense open subset $U_i\subseteq X$.
			The sum of these functions defines a continuous and bounded function on the intersection $\bigcap_i U_i$, which is again dense and open as the intersection of finitely many dense open subsets.
		\end{proof}
	\end{corollary}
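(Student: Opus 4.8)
The plan is to combine the slice-preservation property of $E$ established in Lemma~\ref{lem-condExpPreservesSlices} with the concrete description of the multiplier algebras of ideals of $C_0(X)$, together with the fact that $X=\hat A$ is a Baire space. For a fixed normaliser $n\in N(A,B)$ I would pass to the slice $M_n:=\clspan AnA$ that it generates. As in the proof of Lemma~\ref{lem-condExpPreservesSlices}, $E$ annihilates the aperiodic subslice $M_n\cdot(M_n\cap A)^\perp$ while $E(na)=na$ for $a\in M_n\cap A$, so $E(n)$ restricts to a multiplier of $M_n\cap A$ and annihilates $(M_n\cap A)^\perp$. Recalling from the proof of Lemma~\ref{lem-masaIncHasAperAction} that $M_n\cap A=C_0(V)$ for $V$ the interior of the fixed-point set of $\alpha_n$, so that $(M_n\cap A)^\perp=C_0(X\setminus\overline V)$, it follows that $E(n)$ is a multiplier of the ideal $(M_n\cap A)\oplus(M_n\cap A)^\perp=C_0\bigl(V\cup(X\setminus\overline V)\bigr)$. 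The complement of $V\cup(X\setminus\overline V)$ is the boundary $\overline V\setminus V$, which is closed with empty interior, so this ideal is essential in $A$.

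The second step is the translation into functions. Every essential ideal of $A=C_0(X)$ has the form $C_0(U)$ for a dense open $U\subseteq X$, and $M(C_0(U))$ is canonically the algebra $C_\mathrm{b}(U)$ of bounded continuous functions on $U$; hence $E(n)$ is represented by some $f\in C_\mathrm{b}(U)$ with $U:=V\cup(X\setminus\overline V)$ dense open. That this $f$ is also a Borel representative of $E(n)\in\Mloc(A)\cong\bB(X)/\mM(X)$ is then immediate: $X\setminus U$ is nowhere dense, hence meagre since $X$ is Baire, so extending $f$ across $X\setminus U$ by (say) zero yields a bounded Borel function on $X$ whose class modulo $\mM(X)$ is $E(n)$. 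This last point is just the compatibility of the chain $M(C_0(U))\subseteq\Mloc(C_0(U))=\Mloc(A)$ with the Dixmier--Gonshor identification $\Mloc(C_0(X))\cong\bB(X)/\mM(X)$.

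For the \emph{moreover} statement I would use linearity of $E$. Given normalisers $n_1,\dots,n_k$, we have $E(\sum_i n_i)=\sum_i E(n_i)$, and each $E(n_i)$ is represented by a bounded continuous function $f_i$ on a dense open set $U_i$ by the first part. A finite intersection of dense open subsets of a topological space is again dense and open, so $U:=\bigcap_{i=1}^{k}U_i$ is dense open, and $\sum_i f_i|_U\in C_\mathrm{b}(U)$ represents $E(\sum_i n_i)$, again by the compatibility above.

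The only step carrying genuine content is the claim that $E(n)$ is concentrated on the essential ideal $(M_n\cap A)\oplus(M_n\cap A)^\perp$, and this is essentially already contained in the proofs of Lemmata~\ref{lem-masaIncHasAperAction} and \ref{lem-condExpPreservesSlices} (via the identification of $M_n\cap A$ with $C_0(V)$ and the annihilation of aperiodic subslices by $E$); everything else is bookkeeping with commutative $C^*$-algebras and the Baire property of $X$, so I do not anticipate a serious obstacle.
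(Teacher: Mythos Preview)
Your proof is correct and follows essentially the same approach as the paper: both rely on Lemma~\ref{lem-condExpPreservesSlices} to place $E(n)$ in the multiplier algebra of an essential ideal of $C_0(X)$, then invoke $M(C_0(U))\cong C_\mathrm{b}(U)$ and handle finite sums by intersecting dense opens. Your version is simply more explicit about which essential ideal is used (namely $(M_n\cap A)\oplus(M_n\cap A)^\perp=C_0(V\cup(X\setminus\overline V))$), a detail the paper only spells out in the subsequent Corollary~\ref{cor-explicitFuncRepExpOfNormaliser}; note also that the Baire property is not actually needed for either step you invoke it for---nowhere dense sets are meagre by definition, and finite intersections of dense opens are always dense.
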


	Given a normaliser \(n\in N(A,B)\), it will be important to describe the value \(EL(n)\) of the conditional expectation (\ref{eq-ELDefn}) at \(n\).
	The following corollary allows us to canonically pick a function on \(X=\hat{A}\) to represent \(EL(n)\) in the quotient \(\Mloc(A)=\bB(X)/\mM(X)\) (under the identification of (\ref{eq-mlocBorMeag})).
	Pitts first proves this in \cite[Corollary~3.6]{Pitts_StructRegInc1} using Frol\'ik decompositions of normalisers in the injective hull of \(A\) (which, in the commutative setting, coincides with the local multiplier algebra).
	We offer an alternative proof that does not require the machinery of Frol\'ik decompositions.
	
	\begin{corollary}[cf. {\cite[Corollary~3.6]{Pitts_StructRegInc1}}]\label{cor-explicitFuncRepExpOfNormaliser}
		Let $n\in N(A,B)$ be a normaliser with \(A=C_0(X)\) commutative, and let 
		\begin{equation}\label{eq-intTrivDom}
			V_n:=\{x\in \dom(n):\alpha_n(x)=x\}^\circ
		\end{equation}
		be the interior of the set of fixed points for $\alpha_n$.
		Identify \(\Mloc(C_0(X))\) with \(\bB(X)/\mM(X)\) as in \textup{(\ref{eq-mlocBorMeag})}.
		Then $E(n)$ is represented in $\Mloc(A)$ by a bounded continuous function on $V_n^\ess:=V_n\cup(X\setminus\overline{V_n})^\circ$ which we denote by $n|_{V^\ess_n}$, and for any $g\in C_0(V_n)$ we have $n|_{V^\ess_n}g=E(ng)=ng$, and for any $g^\perp\in C_0(V_n)^\perp$ we have $n|_{V^\ess_n}g=E(ng)=0$.
		If $b=\sum_i n_i$ is a sum of finitely many normalisers $n_i\in N(A,B)$ then $E(b)$ is represented in $\Mloc(A)$ by a function defined on a dense open subset $V^\ess_b:=\bigcap_i V^\ess_{n_i}$.
		\begin{proof}
			Let $W_n:=\clspan AnA$ be the smallest slice containing $n$.
			Then $s(W_n)$ is exactly $\clspan An^*AnA=C_0(\dom(n))$, in which $C_0(V_n)\oplus C_0((X\setminus\overline{V_n})^\circ\cap\dom(n))$ is an essential ideal.
			Since $\alpha_n|_{V_n}=\Id_{V_n}$ we have that $n$ commutes with $C_0(V_n)$, whereby $n$ restricts to a multiplier of $C_0(V_n)$ which is represented by a continuous and bounded function on $V_n$.
			Conversely for all $x\in(X\setminus\overline{V_n})^\circ$, we have that $\alpha_n(x)\neq x$, whereby $W_n\cdot C_0((X\setminus\overline{V_n})^\circ)$ is a topologically non-trivial Hilbert $A$-bimodule.
			The subbimodule $W_n\cdot C_0((X\setminus\overline{V_n})^\circ)$ is then purely outer by \cite[Theorem~8.1]{KwasMeyer_AperTopoFree}, whereby $W_n\cdot C_0((X\setminus\overline{V_n})^\circ)\cap A=\{0\}$.
			Thus $E(n)$ gives a multiplier of $C_0(V_n)$ by Lemma~\ref{lem-condExpPreservesSlices}, which extends by zero to a multiplier of $C_0(V_n)\oplus C_0((X\setminus\overline{V_n})^\circ)=C_0(V^\ess_n)$, which is in turn given by a bounded function $n|_{V^\ess_n}$ on $V^\ess_n$.
			Then for any $g\in C_0(V_n)$ we have $n|_{V^\ess_n}g=E(ng)=ng$ and for any $g^\perp\in C_0(V_n)^\perp$ we have $n|_{V^\ess_n}g^\perp=E(ng^\perp)=0$.
			
			The result about finite sums of normalisers follows since the intersection of finitely many dense open subsets of a topological space is again dense and open.
		\end{proof}
	\end{corollary}

	The set \(V_n\) defined in (\ref{eq-intTrivDom}) is the largest open set on which the partial homeomorphism \(\alpha_n\) is trivial, and so the germs of \(\alpha_n\) on \(V_n\) are all trivial germs.
	
	\begin{lemma}\label{lem-vnTrivPartUn}
		For a normaliser \(n\in N(A,B)\), let \(U_n\) be as in \textup{(\ref{eq-UnBisections})} and let \(V_n\) be as in \textup{(\ref{eq-intTrivDom})}.
		Then \(V_n = U_n\cap G(A,B)^{(0)}\).
		\begin{proof}
			The set $U_n\cap G^{(0)}$ is contained in $V_n$ since $\alpha_n(x)=x$ for all $x\in U_n\cap G^{(0)}$ and $U_n\cap G^{(0)}$ is an open set.
			Conversely, if $x\in V_n$ then there is a neighbourhood of $x$ contained in $\dom(n)$ which is fixed by $\alpha_n$, and so $[\alpha_n,x]=[\Id,x]\in U_n\cap G^{(0)}$, hence $V_n=U_n\cap G^{(0)}$.
		\end{proof}
	\end{lemma}
	
	In Corollary~\ref{cor-explicitFuncRepExpOfNormaliser} a different choice of normalisers giving rise to $b\in\sspan N(A,B)$ may give a different dense open subset $V_b^\ess\subseteq X$.
	However a differing choice of normalisers summing to $b$ can only change the resulting set $V_b^\ess$ on a meagre set, since $V_b^\ess$ is always dense and open.
	For our purposes we only need that at least one such $V_b^\ess$ exists.

	Our goal is to identify normalisers of the inclusion \(A\subseteq B\) with corresponding elements of \(\Cc_c(U,\Sigma(A,B))\) for open bisections \(U\subseteq G(A,B)\).
	However, general normalisers need to satisfy the `compact support' condition; this fails if \(A=C_0(X)\) is non-unital, so no element of \(C_0(X)\setminus C_c(X)\) can be represented by a section in \(\Cc_c(G(A,B),\Sigma(A,B))\); recall the definition (\ref{eq-CcG}).
	We circumvent this issue using the same strategy as Renault in \cite[Lemma~5.5]{Renault_Cartan}.
	
	\begin{lemma}[{\cite[Lemma~5.5]{Renault_Cartan}}]\label{lem-compactNormalisers}
		Define \(N_c(A,B):=\{n\in N(A,B): n^*n\in C_c(X)\subseteq A\}\).
		Then \(N_c(A,B)\) is dense in \(N(A,B)\), and hence the span of \(N_c(A,B)\) is a dense subspace of \(B\).
		\begin{proof}
			By the Cohen-Hewitt factorisation theorem, any normaliser \(n\in N(A,B)\) can be expressed as the product \(mf\) for some \(m\in N(A,B)\) and \(f\in C_0(X)\).
			Let \(f_k\in C_c(X)\) be a sequence of functions with compact support approximating \(f\) in the supremum norm.
			Then \(n_k:=mf_k\) is a normaliser for each \(k\), the terms \(n_k^*f^*fn_k\) are compactly supported functions on \(X\) for each \(k\), and \(n_k\to n\) in norm.
			Hence \(N_c(A,B)\) is dense in \(N(A,B)\).
		\end{proof}
	\end{lemma}

	Lemma~\ref{lem-compactNormalisers} allows us to restrict our analysis to normalisers satisfying this compact support criterion.
	We now show that these define elements of \(\Cc_c(U,\Sigma(A,B))\) for open bisections \(U\subseteq G(A,B)\).

	In his paper \cite[Lemma~5.1]{Renault_Cartan}, Renault defines functions \(\Sigma(A,B)\to\CC\) directly for all elements of the \(C^*\)-algebra \(B\).
	This works since the conditional expectation in that setting takes values in the subalgebra, and hence the image of an element under the conditional expectation is a function which can be evaluated at a point.
	Since we only have an \(\Mloc(A)\)-valued conditional expectation, we cannot evaluate \(E(b)\) at points, but using Corollary~\ref{cor-explicitFuncRepExpOfNormaliser} we can select representatives with which to work.

	\begin{lemma}\label{lem-localEvaluationMap}
		Let $n\in N_c(A,B)$ and let $U_n$ be the open bisection of $G(A,B)$ determined by $n$ as in \textup{(\ref{eq-UnBisections})}.
		Viewing elements of \(\Cc_c(G(A,B),\Sigma(A,B))\) as \(\TT\)-contravariant functions \(\Sigma(A,B)\to\CC\), there is an element $\hat{n}\in\Cc_c(U_n,\Sigma(A,B))$ defined by
		$$\hat{n}[m,x]=\begin{cases}\frac{(m^*n)|_{V^\ess_{m^*n}}(x)}{\sqrt{m^*m(x)}},& x\in V^\ess_{m^*n},\\
			0,&\text{otherwise,}
		\end{cases}$$
		where \(V_{m^*n}\) is as in \textup{(\ref{eq-intTrivDom})}.
		If $b=\sum_i n_i$ is a sum of finitely many normalisers then there is a function $\hat{b}\in\Cc_0(G(A,B),\Sigma(A,B))$ given by
		$$\hat{b}[m,x]=\begin{cases}\frac{(m^*b)|_{V^\ess_{m^*b}}(x)}{\sqrt{m^*m(x)}},& x\in V^\ess_{m^*b},\\
			0,&\text{otherwise,}
		\end{cases}$$
		which agrees with $\sum_i\widehat{n_i}$ on $U_m\cdot V^\ess_{m^*b}$ for each normaliser $m\in N(A,B)$. Hence \(\hat b\) and \(\sum_i\widehat{n_i}\) agree on a dense open subset of $G(A,B)$.
		\begin{proof}
			We first show that the proposed formula for \(\hat{n}\) is well-defined.
			For \(x\in \dom(n)\) and \(m,m'\in N(A,B)\) such that \([m,x]=[m',x]\).
			If \([\alpha_m,x]=[\alpha_{m'},x]\) does not belong to \(U_n\) then neither \(\alpha_{m^*n}\) nor \(\alpha_{m'^*n}\) are the identity on an open neighbourhood of \(x\), implying \(x\notin V_{m^*n}\cup V_{m'^*n}\). 
			The proposed formula then evaluates to zero for both representatives of the class \([m,x]\).
			Otherwise we have \(x\in V_{m^*n}\cap V_{m'^*n}\), and since \([m,x]=[m',x]\) there are functions \(a,a'\in A=C_0(X)\) with \(a(x),a'(x)>0\) such that \(ma=m'a'\).
			We then compute
			\[\frac{(m'^*n)|_{V_{m'^*n}}(x)}{\sqrt{m'^*m'(x)}}=\frac{a'(x)}{a'(x)}\frac{(m'^*n)|_{V_{m'^*n}}(x)}{\sqrt{m'^*m'(x)}}=\frac{a(x)}{a(x)}\frac{(m^*n)|_{V_{m^*n}}(x)}{\sqrt{m^*m(x)}}=\frac{(m^*n)|_{V_{m^*n}}(x)}{\sqrt{m^*m(x)}},\]
			so the proposed formula is well-defined.
			We note that \(\hat{n}\) takes non-zero values only in \(\Sigma(A,B)|_{U_n}\), since for other values of \(x\) either \(x\notin V_{m^*n}\), for which the formula is zero, or \((m^*n)|_{V^\ess_{m^*n}}(x)=0\).
			Moreover the formula is continuous and \(\TT\)-contravariant on \(\Sigma(A,B)|_{U_n}\) since every element of \(\Sigma(A,B)|_{U_n}\) is of the form \([zn,x]\) for \(z\in\TT\) and \(x\in\dom(n)\), and for such values the formula simplifies to 
			\[\hat{n}[zn,x]=\bar z \sqrt{n^*n}(x),\]
			and the right-hand side has the desired properties.

			That $\hat{b}$ is well-defined follows from the observation that $(m^*n_i)|_{V^\ess_{m^*b}}+(m^*n_j)|_{V^\ess_{m^*b}}$ is exactly $(m^*(n_i+n_j))|_{V^\ess_{m^*b}}$ for all $m\in N(A,B)$ and for any summands $n_i,n_j$ in the expression $b=\sum_i n_i$.
			Since $V^\ess_{m^*b}\subseteq V^\ess_{m^*n_i}$ for each $i$ we see that $(m^*b)|_{V^\ess_{m^*b}}(x)=\sum_i(m^*n_i)|_{V^\ess_{m^*n_i}}(x)$, whereby $\hat{b}$ and $\sum_i\widehat{n_i}$ agree for values of $x\in V^\ess_{m^*b}$.
			The union of $U_m\cdot V^\ess_{m^*b}$ over all normalisers $m\in N(A,B)$ is then a dense open subset of $G(A,B)$ since the sets $U_m$ form a basis for the topology on $G(A,B)$, and each $U_m\cdot V^\ess_{m^*b}$ is open.
		\end{proof}
	\end{lemma}

	For any twist \((G,\Sigma)\) over an \'etale groupoid and for any open bisection \(U\subseteq G\), the space \(\Cc_c(U,\Sigma)\) carries a natural  \(C_0(G^{(0)})\)-bimodule structure given by
	\[f n g(\gamma)=f(r(\gamma))n(\gamma)g(s(\gamma)),\]
	for \(n\in \Cc_c(U,\Sigma)\) and \(f,g\in C_0(G^{(0)})\).
	Identifying \(G(A,B)^{(0)}\) with the Gelfand spectrum \(X\) of \(A\), the following lemma shows that this bimodule structure is compatible with the inherited bimodule structur of a slice of the inclusion \(A\subseteq B\).

	\begin{lemma}\label{lem-compactlySupportedSectionsAsEvalutatedNormalisers}
		Let $n\in N(A,B)$ be a normaliser with \(A=C_0(X)\) commutative, and let \(U_n\) be the open bisection of \(G(A,B)\) in \textup{(\ref{eq-UnBisections})}.
		Any section $f\in\Cc_c(U_n,\Sigma(A,B))$ is of the form $\hat{n} h$ for a function $h\in C_0(X)$.
		Moreover, the assignment $\phi_n:\hat{n} h\mapsto nh$ is a well-defined $C_0(X)$-bimodule map.
		\begin{proof}
			Existence of $h\in C_0(X)$ with $f=\hat{n} h$ follows since $\hat{n}$ is a non-vanishing section over $U_n$, and so since the support of $f$ is contained in $U_n$, there exists $h\in C_0(X)$ with compact support contained in $s(U_n)$ satisfying $\hat{n}h=f$.
			
			To see that the map $\hat{n} h\mapsto nh$ is well-defined, suppose $f=\hat{n}h=\hat{n}g$.
			Then \mbox{$\hat{n}(h-g)=0$}, so for all $[m,x]\in\Sigma(A,B)$ with $[\alpha_m,x]\in U_n$ we have 
			\[0=\hat{n}[m,x](h-g)(x),\]
			which implies that $h(x)=g(x)$ for all $x\in s(U_n)=\dom(n)$ as $\hat{n}$ is a non-vanishing section on $U_n$.
			Then 
			$$E((n(h-g))^*n(h-g))=\overline{(h-g)}E(n^*n)(h-g)=\overline{(h-g)}n^*n(h-g)$$ 
			and for all $x\in X$ we have
			$$\overline{(h-g)}n^*n(h-g)(x)=|h(x)-g(x)|^2n^*n(x)=0,$$
			as either $x\in\dom(n)=\supp(n^*n)$ where $h(x)-g(x)=0$, or $x\notin\dom(n)$ so $n^*n(x)=0$.
			
			We now show that \(\phi\) is a \(C_0(X)\)-bimodule homomorphism.
			For $f_1,f_2\in C_0(X)$, $\hat{n}h\in\Cc_c(U_n,\Sigma(A,B))$, $m\in N(A,B)$, and $x\in\dom(m)\cap s(U_n)$ we have
			$$(f_1\hat{n}hf_2)[m,x]=f_1(\alpha_n(x))\frac{(m^*n)|_{V_{m^*n}}(x)}{\sqrt{m^*m(x)}}h(x)f_2(x),$$
			where \(V_{m^*n}\) is as in (\ref{eq-intTrivDom}).
			Since $h$ has compact support $K\subseteq X$, the function $(f\circ\alpha_n)|_{\dom(n)\cap K}$ extends to some $F\in C_0(X)$ by Tietze's extension theorem, and we have $f_1\hat{n}h=\hat{n}hF$.
			We claim that $f_1nh=nhF$.
			To see this, we note that for $x\in\dom(n)\cap K$ we have
			\begin{align*}
				(f_1nh-n&hF)^*(f_1nh-nhF)(x)\\
				&=(h^*n^*f_1^*f_1nh-F^*h^*n^*f_1nh-h^*n^*f_1^*nhF+F^*h^*n^*nhF)(x)\\
				&=2|f_1(\alpha_n(x))|^2n^*n(x)|h(x)|^2-2|f_1(\alpha_n(x))|^2n^*n(x)|h(x)|^2\\
				&=0,
			\end{align*}
			as \(F\) extends \(f_1\circ\alpha_n\).
			Alternatively if $x\notin\dom(n)\cap K$ then all the terms in the above calculation evaluate to zero as $n^*n(x)$ is a common factor in each term.
			Thus 
			$$\phi_n(f_1\hat{n}hf_2)=\phi_n(\hat{n}hFf_2)=nhFf_2=f_1nhf_2=f_1\phi_n(\hat{n}h)f_2,$$ 
			so $\phi_n$ is a $C_0(X)$-bimodule homomorphism.
		\end{proof}
	\end{lemma}
	
	\begin{lemma}\label{lem-mapsToBAndCcGTwist}
		For $n\in N(A,B)$ let $\phi_n$ be the $C_0(X)$-bimodule homomorphism in Lemma~\textup{\ref{lem-compactlySupportedSectionsAsEvalutatedNormalisers}}.
		Let $D:=\bigoplus_{n\in N(A,B)}^{\alg}\Cc_c(U_n,\Sigma(A,B))$ and define $\Phi:D\to B$ by 
		$$\Phi((f_n)_{n\in N(A,B)})=\sum_{n\in N(A,B)}\phi_n(f_n).$$
		Let $c:D\to\Cc_c(G(A,B),\Sigma(A,B))$ be the map $c((f_n)_{n\in N(A,B)})=\sum_{n\in N(A,B)}f_n$.
		There is a ${}^*$-algebra structure on $D$ such that both $\Phi$ and $c$ are ${}^*$-homomorphisms.
		Moreover, $c$ is surjective and $\Phi$ has dense range in $B$.
		\begin{proof}
			For $f\in\Cc_c(U_n,\Sigma(A,B))$ and $g\in\Cc_c(U_m,\Sigma(A,B))$ in $D$, write $f=\hat{n}h$ and $g=\hat{m}k$ with $h,k\in C_0(X)$ as in Lemma~\ref{lem-compactlySupportedSectionsAsEvalutatedNormalisers}.
			Using Lemma~\ref{lem-localEvaluationMap}, define the product $fg=\widehat{nhm}k\in\Cc_c(U_{nhm},\Sigma(A,B))$, extending bilinearly to all of $D$, and define the involution $(\hat{n}h)^*=h^*\widehat{n^*}\in\Cc_c(U_{n^*},\Sigma(A,B))$, again extending anti-linearly.
			A brief computation shows that this gives $D$ a ${}^*$-algebra structure.\\
			
			Fix $f=\hat{n}h\in\Cc_c(U_n,\Sigma(A,B))$ and $g=\hat{m}k\in\Cc_c(U_m,\Sigma(A,B))$.
			To see that \(\Phi\) is multiplicative, we compute 
			$$\Phi(\hat{n}h\hat{m}k)=\Phi(\widehat{nhm}k)=\phi_{nhm}(\widehat{nhm}k)=nhmk=\phi_n(\hat{n}h)\phi_m(\hat{m}k)=\Phi(f)\Phi(g).$$
			Since $f$ has compact support there is $j\in C_0(X)$ with compact support such that $f=j\hat{n}h$, and so $\Phi(f^*)=\phi_{n^*}(\bar{h}\widehat{n^*}\bar{j})=\bar{h}\phi_{n^*}(\widehat{n^*}\bar{j})=(j\hat{n}h)^*=\Phi(f)^*$.
			Thus $\Phi$ is a ${}^*$-homomorphism.\\
			
			To see that $c$ is a ${}^*$-homomorphism, fix $\hat{n}h\in\Cc_c(U_n,\Sigma(A,B))$ and $\hat{m}k\in\Cc_c(U_m,\Sigma(A,B))$.
			Considering $\hat{n}h$ and $\hat{m}k$ as sections of the canonical line bundle associated to the twist $(G(A,B),\Sigma(A,B))$, for $[\alpha_p,x]\in G(A,B)$ we have
			\begin{align*}
				(c(\hat{n}h)c(\hat{m}k))[\alpha_p,x]&=\sum_{\alpha_q(y)=\alpha_p(x)}\hat{n}h[\alpha_q,y]\hat{m}k[\alpha_{q^*p},x]\\
				&=\hat{n}[\alpha_n,y]h(\alpha_{n^*p}(x))\hat{m}[\alpha_{n^*p},x]k(x)\\
				&=\begin{cases}
					\hat{n}[\alpha_n,\alpha_m(x)]h(\alpha_m(x))\hat{m}[\alpha_m,x]k(x),&[\alpha_{p},x]=[\alpha_{nm},x],\\
					0,&\text{otherwise,}
				\end{cases}\\
				&=c(\widehat{nhm}k)[\alpha_p,x]
			\end{align*}
			The second and third equalities hold since $\hat{n}$ and $\hat{m}$ have support contained in $U_n$ and $U_m$, respectively.
			For $f=\hat{n}h\in\Cc_c(U_n,\Sigma(A,B))$ we have
			\begin{align*}
				c(f^*)[\alpha_p,x]&=\bar{h}\widehat{n^*}[\alpha_p,x]\\
				&=\overline{(h\hat{n})[\alpha_{p^*},\alpha_p(x)]}\\
				&=\begin{cases}
					\overline{h(x)\hat{n}[\alpha_n,\alpha_{n^*}(x)]},&[\alpha_p,x]=[\alpha_{n^*},x],\\
					0,&\text{otherwise,}
				\end{cases}\\
				&=(\hat{n}h)^*[\alpha_{p},x]\\
				&=c(f)^*[\alpha_p,x]
			\end{align*}
			thus $c$ is a ${}^*$-homomorphism.\\
			
			That $c$ is surjective is clear as $\Cc_c(G(A,B),\Sigma(A,B))$ is spanned by the spaces $\Cc_c(U,\Sigma(A,B))$ ranging over open bisections $U\subseteq G$, and bisections of the form $U_n$ (as given in \ref{eq-UnBisections}) for normalisers $n\in N(A,B)$ form a basis for the topology on $G(A,B)$.
			
			To see that $\Phi$ has dense range, it suffices to show that any normaliser $n\in N(A,B)$ can be approximated by elements in the image of $\Phi$ since $A\subseteq B$ is a regular subalgebra.
			For $n\in N(A,B)$ we can write $n=mh$ for $m\in N(A,B)$ and $h\in C_0(X)=A$ by the Cohen-Hewitt factorisation theorem.
			Since $C_c(X)$ is dense in $C_0(X)$, we have compactly supported functions $h_k$ with $h_k\to h$ uniformly.
			Since $n^*n$ has support contained in $\dom(n)=s(U_n)$, we can pick functions $h_k$ with support contained in $s(U_n)$.
			Then $n=mh=\lim\limits_{k\to\infty}mh_k$, and each $mh_k$ is exactly the image of $\hat{m}h_k\in\Cc_c(U_m,\Sigma(A,B))$.
			Thus the image of $\Phi$ is dense in $B$.
		\end{proof}
	\end{lemma}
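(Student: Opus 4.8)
The plan is to transport an algebraic structure onto $D$ via the concrete description of its summands from Lemma~\ref{lem-compactlySupportedSectionsAsEvalutatedNormalisers}: every element of $\Cc_c(U_n,\Sigma(A,B))$ has the form $\hat n\cdot h$ with $h\in C_0(X)$, where $h$ may be chosen with compact support inside $\dom(n)=s(U_n)$, and $\hat n h=\hat n h'$ exactly when $h$ and $h'$ agree on $\dom(n)$. On the generators I would declare $(\hat n h)\cdot(\hat m k):=\widehat{nhm}\,k$ — which is legitimate because $nhm\in N(A,B)$, as $A\subseteq N(A,B)$ and a product of normalisers is again a normaliser — and $(\hat n h)^{*}:=\overline h\,\widehat{n^{*}}$, and then extend the product bilinearly and the involution conjugate-linearly to all of $D$. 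The one delicate issue is well-definedness, which rests on the observation that $nh=0$ whenever $h$ vanishes on $\dom(n)$, since $(nh)^{*}nh=\overline h\,n^{*}n\,h=|h|^{2}n^{*}n$ vanishes identically on $X$; combining this with the identity $\phi_p(\hat p g)=pg$ of Lemma~\ref{lem-compactlySupportedSectionsAsEvalutatedNormalisers}, the product and involution become insensitive to the chosen representatives, and the $*$-algebra axioms (associativity, $(fg)^{*}=g^{*}f^{*}$, $f^{**}=f$) then reduce to the corresponding identities for normalisers in $B$.

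Once $D$ carries this structure, $\Phi$ is a $*$-homomorphism essentially by construction, since $\Phi\big((\hat n h)(\hat m k)\big)=\phi_{nhm}(\widehat{nhm}\,k)=nhmk=\phi_n(\hat n h)\,\phi_m(\hat m k)$ and $\Phi\big((\hat n h)^{*}\big)=\phi_{n^{*}}(\overline h\,\widehat{n^{*}})=\overline h\,n^{*}=(nh)^{*}$, where the last equality uses that $\phi_{n^{*}}$ is a $C_0(X)$-bimodule map. For $c$ I would regard $\hat n h\in\Cc_c(U_n,\Sigma(A,B))$ as a section of the canonical line bundle over $G(A,B)$ supported on the bisection $U_n$ and compute the convolution $c(\hat n h)\ast c(\hat m k)$ straight from its defining formula: at a germ $[\alpha_p,x]$ the support conditions collapse the sum to at most one summand, coming from the germ $[\alpha_n,\alpha_m(x)]$, and this summand is non-zero only when $[\alpha_p,x]=[\alpha_{nm},x]$; evaluating it, using the reduction of $\hat n$ on $U_n$ to $\overline z\sqrt{n^{*}n}$ recorded in Lemma~\ref{lem-localEvaluationMap}, yields precisely $c(\widehat{nhm}\,k)[\alpha_p,x]$. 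The analogous, shorter, computation gives $c(f^{*})=c(f)^{*}$, so $c$ is a $*$-homomorphism.

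Surjectivity of $c$ follows from the construction of $\Cc_c(G(A,B),\Sigma(A,B))$: a standard partition-of-unity argument subordinate to a finite cover of the support shows it is spanned by the spaces $\Cc_c(U_n,\Sigma(A,B))$, since the $U_n$ form a basis for the topology on $G(A,B)$, and each such space clearly lies in the image of $c$. For density of the range of $\Phi$ I would use regularity of $A\subseteq B$ to reduce to approximating a single normaliser $n$: by the Cohen--Hewitt factorisation theorem write $n=mh$ with $m\in N(A,B)$ and $h\in C_0(X)$, which may be taken supported in $\dom(n)=s(U_n)$; picking $h_k\in C_c(X)$ with supports in $\dom(n)$ and $h_k\to h$ uniformly gives $mh_k=\Phi(\hat m h_k)$ with $mh_k\to mh=n$, so $n\in\overline{\Img\Phi}$.

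The step I expect to be the main obstacle is the very first one: verifying that the product and involution on $D$ are genuinely well defined — the summand into which a product falls depends on the chosen representative $\hat n h$, so one must check the value is unchanged — and that they obey the $*$-algebra axioms; this is exactly where one must lean on $\phi_p(\hat p g)=pg$ and on the vanishing observation $nh=0$. The convolution identity for $c$ is the other point requiring care, because $G(A,B)$ need not be Hausdorff, so one cannot conclude from agreement on a dense set and must instead argue directly through the support conditions on $U_n$ and $U_m$.
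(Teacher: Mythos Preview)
Your proposal is correct and follows essentially the same approach as the paper: you define the same product $(\hat n h)(\hat m k)=\widehat{nhm}\,k$ and involution $(\hat n h)^*=\overline{h}\,\widehat{n^*}$ on $D$, verify that $\Phi$ and $c$ are ${}^*$-homomorphisms by the same direct computations (including the reduction of the convolution sum via the support constraint on $U_n$ and $U_m$), and obtain surjectivity of $c$ and density of $\Phi$ by the same basis/partition-of-unity and Cohen--Hewitt arguments. If anything, you are more careful than the paper about the well-definedness of the product and involution on $D$, which the paper dispatches in one line; conversely, your computation $\phi_{n^*}(\overline{h}\,\widehat{n^*})=\overline{h}\,n^*$ tacitly uses that $\overline{h}\,\widehat{n^*}$ is already compactly supported (so it equals some $\widehat{n^*}g'$) together with the bimodule identity $fn^*=n^*(f\circ\alpha_{n^*})$, which the paper makes explicit by introducing an auxiliary compactly supported $k$ with $f=k\hat n h_f$.
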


	Lemma~\ref{lem-mapsToBAndCcGTwist} allows us to model a dense subspace of elements of \(B\) by genuine functions, and gives as a similar tool to the evaluation map of Renault \cite[Lemma~5.1]{Renault_Cartan}.
	This culminates in Corollary~\ref{cor-mapDescendsToCc(Gtwist)} and Theorem~\ref{thm-isoEssTwstGrpdCstarAlg} where we show that this generalised evaluation map yields the desired isomorphism reconstructing the \(C^*\)-algebra \(B\) as a twisted groupoid \(C^*\)-algebra.
	
	\begin{proposition}\label{prop-commutingCondExps}
		Let $D$, $\Phi$, and $c$ be as in Lemma~\textup{\ref{lem-mapsToBAndCcGTwist}} and write \((G,\Sigma)\) for the Weyl twist \((G(A,B),\Sigma(A,B))\). 
		Identify $A$ with $C_0(G^{(0)})$ and \(\Mloc(A)\) with \(\bB(G^{(0)})/\mM(G^{(0)})\) as in (\ref{eq-mlocBorMeag}).
		There is a linear map $R:\Cc_c(G,\Sigma)\to\Mloc(C_0(G^{(0)}))$ that restricts functions to the unit space $G^{(0)}$ on $G$, such that the following diagram commutes:
		\begin{center}
			\begin{tikzcd}
				{D} \arrow[rr, "\Phi"] \arrow[d, "c"] &  & B \arrow[d, "E"]            \\
				{\Cc_c(G,\Sigma)} \arrow[rr, "R"]                                       &  & \bB(G^{(0)})/\mM(G^{(0)})
			\end{tikzcd}
		\end{center}
		\begin{proof}
			Let $f\in\Cc_c(U_n,\Sigma)$.
			Then $f$ has support contained in $U_n$, so $f|_{G^{(0)}}$ has support contained in $U_n\cap G^{(0)}$, which is equal to \(V_n\) by Lemma~\ref{lem-vnTrivPartUn}.
			Since $f$ is continuous and compactly supported on $U_n$, $f|_{G^{(0)}}$ is continuous and bounded on $V_n$, so defines a multiplier of $C_0(V_n)$.
			As $f|_{G^{(0)}}$ is zero outside of $V_n$, we see that $f|_{G^{(0)}}$ extends (by zero) to a multiplier on $C_0(V_n\cup(G^{(0)}\setminus \overline{V_n}))=C_0(V_n)\oplus C_0(V_n)^\perp$, which is an essential ideal in $C_0(G^{(0)})$.
			Thus $f|_{G^{(0)}}$ defines an element of $\Mloc(C_0(G^{(0)}))$.
			Since restriction to \(G^{(0)}\) is linear, we have a linear map \(R:\Cc_c(G(A,B),\Sigma(A,B))\to\Mloc(C_0(G^{(0)}))\) by defining \(R(f)\) as the class of \(f|_{G^{(0)}}\) in \(\Mloc(C_0(G^{(0)}))\).
			
			To show that \(E\circ\Phi=R\circ c\), it suffices (by linearity) to show that the two maps agree on the subspaces \(\Cc_c(U_n,\Sigma)\).
			Fix such a bisection \(U_n\) associated to a normaliser \(n\in N(A,B)\) and fix \(f\in\Cc_c(U_n,\Sigma)\).
			Write $f=\hat{n}h$ for some $h\in C_0(G^{(0)})$ using Lemma~\ref{lem-compactlySupportedSectionsAsEvalutatedNormalisers}.
			To show that $E(\Phi(f))=R(c(f))$ it suffices to show that both represent the same multiplier on any essential ideal of \(C_0(G^{(0)})\).
			In particular, it suffices to check on \(C_0(V_n)\oplus C_0(V_n)^\perp\).
			For $g\in C_0(V_n)$ we have 
			\[E(ng)g=(nh)|_{V_n}g=nhg=\hat n hg=f|_{V_n}g=R(f)g\]
			by Corollary~\ref{cor-explicitFuncRepExpOfNormaliser}.
			For $g^\perp\in C_0(V_n)^\perp$ we have $(nh)|_{V_n}g^\perp=0=f|_{V_n}g$ by Corollary~\ref{cor-explicitFuncRepExpOfNormaliser}, and $(nh)|_{G^{(0)}}g^\perp=0=f|_{G^{(0)}}$ since the supports of $n|_{G^{(0)}}$ and $g^\perp$ have zero intersection.
			Thus \(E(\Phi(f))\) and \(R(c(f))\) are both represented by the same multiplier on the essential ideal \(C_0(V_n)\oplus C_0(V_n)^\perp\) of $C_0(G^{(0)})$, whereby \(E(\Phi(f))=R(c(f))\).
		\end{proof}
	\end{proposition}

	The restriction map \(R\) in Proposition~\ref{prop-commutingCondExps} is the same as the conditional expectation $EL$ defined in \cite[Proposition~4.3]{KwasMeyer_EssCrossProd}, and so is the canonical local multiplier algebra-valued expectation giving rise to the essential groupoid $C^*$-algebra.
	
	\begin{corollary}\label{cor-mapDescendsToCc(Gtwist)}
		The map $\Phi$ descends via $c$ to a ${}^*$-homomorphism
		$$\Psi_0:\Cc_c(G(A,B),\Sigma(A,B))\to B$$ 
		satisfying $\Psi_0(c(f_n))=\Phi(f_n)$ for all $(f_n)\in D$, and  $R=E\circ\Psi_0$.
		\begin{proof}
			It suffices to show that the kernel of $c$ is contained in the kernel of $\Phi$.
			Contrapositively, we must show that if $\Phi(d)\neq 0$ for some $d\in D$ then $c(d)\neq 0$.
			Fix such $d\in D$ with $\Phi(d)\neq 0$.
			Since $c$ and $\Phi$ are ${}^*$-homomorphisms and $E$ is faithful, we have
			$$R(c(d)^*c(d))=R(c(d^*d))=E(\Phi(d^*d))=E(\Phi(d)^*\Phi(d))\neq 0.$$
			In particular, $c(d)^*c(d)$ cannot be zero since $R$ is linear, whereby $c(d)\neq 0$.
		\end{proof}
	\end{corollary}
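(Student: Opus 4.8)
The plan is to exploit the surjectivity of $c$ established in Lemma~\ref{lem-mapsToBAndCcGTwist} together with the universal property of the quotient of $D$ by $\ker c$. Since $c\colon D\to\Cc_c(G(A,B),\Sigma(A,B))$ is a surjective ${}^*$-homomorphism, it exhibits $\Cc_c(G(A,B),\Sigma(A,B))$ as the quotient ${}^*$-algebra $D/\ker c$. Consequently, producing a ${}^*$-homomorphism $\Psi_0$ with $\Psi_0\circ c=\Phi$ is equivalent to checking the single containment $\ker c\subseteq\ker\Phi$; once this is known, $\Psi_0$ is well defined on $c(d)$ by $\Psi_0(c(d)):=\Phi(d)$ and inherits linearity, multiplicativity, and ${}^*$-preservation directly from $\Phi$.

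The heart of the argument is therefore the verification that $\ker c\subseteq\ker\Phi$, which I would carry out contrapositively. Suppose $d\in D$ satisfies $\Phi(d)\neq 0$; I want to conclude $c(d)\neq 0$. Apply the faithful local expectation $E$ to the nonzero positive element $\Phi(d)^*\Phi(d)=\Phi(d^*d)$ of $B$: faithfulness of $E$ (condition (ECC3)) forces $E(\Phi(d^*d))\neq 0$. Now push this through the commuting square of Proposition~\ref{prop-commutingCondExps}, namely $E\circ\Phi=R\circ c$, together with the fact that $c$ is a ${}^*$-homomorphism, to obtain $E(\Phi(d^*d))=R\big(c(d^*d)\big)=R\big(c(d)^*c(d)\big)$. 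Since $R$ is linear and sends $0$ to $0$, the element $c(d)^*c(d)$ cannot vanish, and hence $c(d)\neq 0$, as desired.

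With $\Psi_0$ in hand, the identity $R=E\circ\Psi_0$ is immediate from Proposition~\ref{prop-commutingCondExps}: given $\xi\in\Cc_c(G(A,B),\Sigma(A,B))$, use surjectivity of $c$ to write $\xi=c(d)$ for some $d\in D$, and compute $E(\Psi_0(\xi))=E(\Psi_0(c(d)))=E(\Phi(d))=R(c(d))=R(\xi)$.

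I do not expect a genuine obstacle in this step: all of the substantive work — the construction of $\Phi$ and $c$, the ${}^*$-algebra structure on $D$, and the commuting square relating $E$ and $R$ — has already been done in Lemma~\ref{lem-compactlySupportedSectionsAsEvalutatedNormalisers}, Lemma~\ref{lem-mapsToBAndCcGTwist}, and Proposition~\ref{prop-commutingCondExps}. The only point that requires care, and the only place where the essential Cartan hypothesis is genuinely used, is the invocation of faithfulness of $E$ to pass from $\Phi(d)\neq 0$ to $E(\Phi(d)^*\Phi(d))\neq 0$; without this one could only descend $\Phi$ modulo the kernel of a reduced-type seminorm rather than modulo $\ker c$ itself.
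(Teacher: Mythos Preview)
Your proposal is correct and follows essentially the same argument as the paper: both reduce to the containment $\ker c\subseteq\ker\Phi$, prove it contrapositively by applying the faithful expectation $E$ to $\Phi(d)^*\Phi(d)$, and then push through the commuting square $E\circ\Phi=R\circ c$ of Proposition~\ref{prop-commutingCondExps}. Your write-up is slightly more explicit about deducing $R=E\circ\Psi_0$ from surjectivity of $c$, but the logic is identical.
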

	
	\begin{theorem}\label{thm-isoEssTwstGrpdCstarAlg}
		Let $A\subseteq B$ be an essential Cartan pair with faithful local conditional expectation $E:B\to\Mloc(A)$.
		Let $(G(A,B),\Sigma(A,B))$ by the Weyl twist associated to $A\subseteq B$.
		There is a ${}^*$-isomorphism $C^*_\ess(G(A,B),\Sigma(A,B))\cong B$ that restricts to an isomorphism $C_0(G(A,B)^{(0)})\cong A$ and intertwines $E$ with the canonical local expectation $C^*_\ess(G(A,B),\Sigma(A,B))\to\Mloc(C_0(G(A,B)^{(0)}))$, where $\Mloc(A)$ is identified with $\Mloc(C_0(G(A,B)^{(0)}))$.
		\begin{proof}
			Corollary~\ref{cor-mapDescendsToCc(Gtwist)} gives a ${}^*$-homomorphism $\Psi_0:\Cc_c(G(A,B),\Sigma(A,B))\to B$, and so this extends to a ${}^*$-homomorphism $\Psi:C^*(G(A,B),\Sigma(A,B))\to B$ by the universal property of the full twisted groupoid $C^*$-algebra.
			Since the canonical local conditional expectation $EL:C^*(G(A,B),\Sigma(A,B))\to\Mloc(A)$ is the continuous extension of the restriction map $R:\Cc_c(G(A,B),\Sigma(A,B))\to\Mloc(C_0(G(A,B)^{(0)}))$, Proposition~\ref{prop-commutingCondExps} implies that $\Psi$ intertwines $EL$ with $E$.
			Thus for $a\in C^*(G(A,B),\Sigma(A,B))$ we have $EL(a^*a)=E(\Psi(a)^*\Psi(a))=0$ if and only if $a\in\ker(\Psi)$ as $E$ is a faithful.
			In particular, the kernel of $\Psi$ is contained in the kernel of $EL$, so $\Psi$ descends to a homomorphism $\psi:C^*_\ess(G(A,B),\Sigma(A,B))\to B$.
			Moreover, $\psi$ is injective since it intertwines expectations, and both $EL$ and $E$ are faithful.
			The map $\psi$ is surjective since $\Phi$ has dense image in $B$ by Lemma~\ref{lem-mapsToBAndCcGTwist}.
		\end{proof}
	\end{theorem}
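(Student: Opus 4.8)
The plan is to exhibit $B$ as the essential twisted groupoid $C^*$-algebra by descending the map into the \emph{full} twisted groupoid $C^*$-algebra. Corollary~\ref{cor-mapDescendsToCc(Gtwist)} already hands us a ${}^*$-homomorphism $\Psi_0:\Cc_c(G(A,B),\Sigma(A,B))\to B$ with $R=E\circ\Psi_0$, and since $C^*(G(A,B),\Sigma(A,B))$ is by definition the maximal $C^*$-completion of $\Cc_c(G(A,B),\Sigma(A,B))$, the map $\Psi_0$ extends uniquely to a ${}^*$-homomorphism $\Psi:C^*(G(A,B),\Sigma(A,B))\to B$. Using the full algebra is essential here: $\Psi_0$ need not be continuous for the reduced norm, whereas continuity for the maximal norm is automatic. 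By Lemma~\ref{lem-mapsToBAndCcGTwist} the range of $\Phi$, hence of $\Psi_0$, hence of $\Psi$, is dense in $B$; as a ${}^*$-homomorphism of $C^*$-algebras has closed range, $\Psi$ is surjective.

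Next I would check that the relevant expectations are intertwined. The canonical local expectation $EL:C^*(G(A,B),\Sigma(A,B))\to\Mloc(C_0(G^{(0)}))$ of \cite{KM2} is the continuous extension of the restriction map, which by Proposition~\ref{prop-commutingCondExps} equals $R=E\circ\Psi_0$ on the dense ${}^*$-subalgebra $\Cc_c(G(A,B),\Sigma(A,B))$. Since $EL$ and $E\circ\Psi$ are both continuous and agree on this dense subalgebra, $EL=E\circ\Psi$ on all of $C^*(G(A,B),\Sigma(A,B))$.

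I would then bring in the description of $C^*_\ess(G(A,B),\Sigma(A,B))$ from \cite{KM2} as the quotient $C^*(G(A,B),\Sigma(A,B))/J$ by the singular ideal $J=\{a:EL(a^*a)=0\}$, for which the induced expectation $\overline{EL}$ on the quotient is faithful; here one uses the Cauchy--Schwarz inequality for contractive completely positive maps to see that $EL$ vanishes on $J$ and so factors through the quotient map $q$ as $EL=\overline{EL}\circ q$. For $a\in\ker\Psi$ we get $EL(a^*a)=E(\Psi(a)^*\Psi(a))=0$, so $\ker\Psi\subseteq J$ and $\Psi$ descends to a ${}^*$-homomorphism $\psi:C^*_\ess(G(A,B),\Sigma(A,B))\to B$ with $\psi\circ q=\Psi$; thus $\psi$ is surjective, and since $EL=\overline{EL}\circ q=E\circ\psi\circ q$ with $q$ surjective, $\overline{EL}=E\circ\psi$. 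If $\psi(x)=0$ then $\overline{EL}(x^*x)=E(\psi(x)^*\psi(x))=0$, whence $x=0$ by faithfulness of $\overline{EL}$, so $\psi$ is a ${}^*$-isomorphism. For the last assertions, unwinding the definitions of $c$ and of $\phi_a$ for $a\in A\subseteq N(A,B)$ shows that $\Psi_0$ restricts on $\Cc_c(G^{(0)},\Sigma(A,B))\cong C_c(G^{(0)})=C_c(X)$ to the inclusion $C_c(X)\hookrightarrow C_0(X)=A$, so $\psi$ identifies $C_0(G^{(0)})$ with $A$; and since $\overline{EL}|_{C_0(G^{(0)})}=\Id$ and $E|_A=\Id$ under this identification, $\psi$ automatically intertwines $\overline{EL}$ with $E$.

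The step I expect to cost the most effort is pinning down precisely which quotient of $C^*(G(A,B),\Sigma(A,B))$ one lands in: one must invoke the \cite{KM2} characterisation of the singular ideal as $\{a:EL(a^*a)=0\}$ together with faithfulness of $\overline{EL}$ on $C^*_\ess$, after which everything reduces to the standard principle that a dense-range ${}^*$-homomorphism intertwining faithful generalised expectations is an isomorphism. A secondary, more computational point is verifying that $\Psi_0$ is genuinely the identity on the copy of $A$, which amounts to checking the $\TT$-normalisations in the definitions of $\hat n$ and $\Phi$ against the trivialisation of the line bundle over $G^{(0)}$.
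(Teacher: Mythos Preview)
Your proposal is correct and follows essentially the same route as the paper: extend $\Psi_0$ to the full algebra by universality, use Proposition~\ref{prop-commutingCondExps} to intertwine $EL$ with $E$, conclude $\ker\Psi\subseteq\{a:EL(a^*a)=0\}$ so that $\Psi$ factors through $C^*_\ess$, and then argue injectivity and surjectivity from faithfulness of the expectations and density of the image of $\Phi$. Your write-up is in fact more careful than the paper's on two points: you spell out that $C^*_\ess$ is the quotient of the \emph{full} algebra by the null ideal of $EL$ (the paper defines it as a quotient of the reduced algebra and leaves this identification implicit), and you explicitly verify the restriction $\psi|_{C_0(G^{(0)})}\cong A$, which the paper asserts but does not check.
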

	
	\section{Uniqueness of the Weyl groupoid and twist}
	
	The Weyl groupoid and twist associated to an essential Cartan inclusion $A\subseteq B$ are not unique among the class of all \'etale groupoids giving rise to an isomorphic essential Cartan pair.
	The Weyl groupoid is however `final' in a certain sense, as we can show that any other twist over a groupoid giving the same essential Cartan pair will have the Weyl groupoid and twist as a quotient.
	We also show that the Weyl pair is unique among twists over effective groupoids.
	
	\begin{definition}\label{defn-twistHom}
		Let \((H,\Omega)\) and \((G,\Sigma)\) be twists over \'etale groupoids \(H\) and \(G\).
		A \emph{homomorphism of twists} \((\beta,\tilde\beta):(H,\Omega)\to (G,\Sigma)\) consists of groupoid homomorphisms \(\beta:H\to G\) and \(\tilde\beta:\Omega\to\Sigma\) such that the following diagram commutes:
		\begin{center}
			\begin{tikzcd}
				H^{(0)}\times\TT \arrow[d, "\beta|_{H^{(0)}}\times\Id"] \arrow[r] & \Omega \arrow[r] \arrow[d, "\tilde\beta"] & H \arrow[d, "\beta"] \\
				G^{(0)}\times\TT \arrow[r]                                        & \Sigma \arrow[r]                          & G                   
			\end{tikzcd}
		\end{center}
	\end{definition}

	\begin{theorem}\label{thm-groupoidSurjectionProperty}
		Let $A\subseteq B$ be an essential Cartan pair and let $(G,\Sigma)$ be the associated Weyl twist.
		Let $(H,\Omega)$ be a twist over an \'etale groupoid with locally compact Hausdorff unit space.
		Suppose there is an isomorphism $\varphi:C^*_\ess(H,\Omega)\to C^*_\ess(G,\Sigma)$ that restricts to an isomorphism $\varphi|_{C_0(H^{(0)})}:C_0(H^{(0)})\to C_0(G^{(0)})$ and intertwines local conditional expectations.
		Then there is a homomorphism of twists $(\beta_\varphi,\tilde\beta_\varphi):(H,\Omega)\to (G,\Sigma)$ that restricts to a homeomorphism of unit spaces, and  \(H^{(0)}\) is a dense subset of the kernel of \(\beta_\varphi\).
		Moreover $\tilde\beta_\varphi$ is a local homeomorphism.
	\end{theorem}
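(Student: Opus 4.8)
The plan is to read off $\beta_\varphi$ and $\tilde\beta_\varphi$ from the normalisers that $\varphi$ produces. Using Theorem~\ref{thm-isoEssTwstGrpdCstarAlg} I identify $C^*_\ess(G,\Sigma)$ with $B$ so that $C_0(G^{(0)})$ becomes $A$, and let $h\colon H^{(0)}\to G^{(0)}$ be the homeomorphism dual to $\varphi|_{C_0(H^{(0)})}$. Since $U^{-1}U\subseteq H^{(0)}$ for every open bisection $U\subseteq H$, each $f\in\Cc_c(U,\Omega)$ is a normaliser of $C_0(H^{(0)})$ in $C^*_\ess(H,\Omega)$, hence $\varphi(f)\in N(A,B)$. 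A direct convolution computation shows that the partial homeomorphism $\alpha_f$ of Proposition~\ref{prop-normaliserAction} agrees, on the open subset of $s(U)$ where $f$ does not vanish, with the partial homeomorphism $\rho_U\colon s(U)\to r(U)$ induced by the bisection; and since $\varphi$ restricts to $h$ on spectra one gets $\alpha_{\varphi(f)}=h\circ\alpha_f\circ h^{-1}$. I would then set
$$\beta_\varphi(\gamma):=[\alpha_{\varphi(f)},h(s(\gamma))]\in G(A,B)$$
for any open bisection $U\ni\gamma$ and any $f\in\Cc_c(U,\Omega)$ with $f(\gamma)\neq0$. Independence of $f$ follows because near $s(\gamma)$ the map $\alpha_f$ equals $\rho_U$ regardless of $f$; independence of $U$ follows because two open bisections through $\gamma$ coincide on the open bisection $U\cap U'$, a neighbourhood of $\gamma$, so $\rho_U$ and $\rho_{U'}$ have the same germ at $s(\gamma)$.

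Next I would verify that $\beta_\varphi$ restricts to $h$ on unit spaces (take $U\subseteq H^{(0)}$, so $\rho_U=\Id$), is a groupoid homomorphism (from $\rho_{UU'}=\rho_U\circ\rho_{U'}$ together with Lemma~\ref{lem-WeylPseudogroupIsAPseudogroup}), is continuous, and on each open bisection $U$ is injective and a homeomorphism onto the basic open set $U_{\varphi(f)}$ of $G(A,B)$, hence a local homeomorphism. The twist map is built the same way: for $\omega\in\Omega$ over $\gamma$ I choose $f\in\Cc_c(U,\Omega)$ on an open bisection $U\ni\gamma$ with $f(\omega)>0$ (possible after multiplying $f$ by a constant phase) and set $\tilde\beta_\varphi(\omega):=[\varphi(f),h(s(\gamma))]\in\Sigma(A,B)$. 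Well-definedness follows from a bump-function and Tietze argument: any two such sections differ, near $\gamma$, by multiplication by a continuous function that is constant on $s$-fibres and positive at $s(\gamma)$, which yields $b,b'\in C_0(H^{(0)})$ positive at $s(\gamma)$ with $bf'=b'f$, hence $[\varphi(f'),h(s(\gamma))]=[\varphi(f),h(s(\gamma))]$. One then checks that $\tilde\beta_\varphi$ covers $\beta_\varphi$, restricts to $h\times\Id$ on $H^{(0)}\times\TT$, and is $\TT$-equivariant and multiplicative, so that $(\beta_\varphi,\tilde\beta_\varphi)$ is a homomorphism of twists. For the local homeomorphism claim I restrict $\tilde\beta_\varphi$ to $\Omega|_U$ with $U$ small enough that both $\Omega$ and $\Sigma(A,B)$ are trivial over $U$ and over $\beta_\varphi(U)$ respectively: there $\tilde\beta_\varphi$ is a continuous $\TT$-equivariant map of principal $\TT$-bundles covering the homeomorphism $\beta_\varphi|_U$, hence itself a homeomorphism onto the open set $\Sigma(A,B)|_{\beta_\varphi(U)}$.

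Finally, $\ker\beta_\varphi=\beta_\varphi^{-1}(G(A,B)^{(0)})$ is open by continuity, and $\beta_\varphi(\gamma)\in G(A,B)^{(0)}$ forces $\rho_U$ to fix $s(\gamma)$, i.e. $r(\gamma)=s(\gamma)$; so every open bisection $U\subseteq\ker\beta_\varphi$ lies in the isotropy, whence $\alpha_{\varphi(f)}=\Id$ on $\dom(\varphi(f))$ for $f\in\Cc_c(U,\Omega)$ and Lemma~\ref{lem-sliceCommutesIfPartialHomTriv} gives $\varphi(f)\in A$; since $A$ is maximal abelian, $f\in C_0(H^{(0)})$ in $C^*_\ess(H,\Omega)$, and applying the canonical restriction expectation identifies $f$ with $f|_{H^{(0)}}$ there. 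If $H^{(0)}$ were not dense in $\ker\beta_\varphi$ I could pick a nonempty open bisection $W\subseteq\ker\beta_\varphi$ disjoint from $H^{(0)}$ and $0\neq f\in\Cc_c(W,\Omega)$; then $(f^*f)|_{H^{(0)}}$ has nonempty open — hence non-meagre — support, so $f\notin J_{\mathrm{sing}}$ and $f\neq0$ in $C^*_\ess(H,\Omega)$, contradicting $f=f|_{H^{(0)}}=0$. I expect this density argument to be the main obstacle: it is where the maximal-abelian hypothesis and the precise description of the singular ideal $J_{\mathrm{sing}}$ genuinely enter, and where one must control the possibly non-Hausdorff structure of $H$; by contrast the construction of $\beta_\varphi$ and $\tilde\beta_\varphi$ and the homomorphism-of-twists verifications, though lengthy, reduce to routine bookkeeping with germs, convolution products, and local trivialisations.
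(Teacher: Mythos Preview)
Your construction of $\beta_\varphi$ and $\tilde\beta_\varphi$, the well-definedness, multiplicativity, and local homeomorphism checks all follow the same route as the paper (the paper builds $\tilde\beta_\varphi$ first and reads $\beta_\varphi$ off from it, but the formulas are identical). Your density argument for $H^{(0)}\subseteq\ker\beta_\varphi$ is a minor variant: the paper deduces $f=0$ from $\|f\|\le\|f-g\|$ for sections on disjoint bisections (its Lemma~\ref{lem-boundOnNormalisersGrpdAlg}), while you deduce it from $E(f)=f|_{H^{(0)}}=0$ combined with $f\notin J_{\mathrm{sing}}$; both are valid.

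The substantive omission is \emph{surjectivity}. The paper's proof of this theorem devotes roughly half its length to showing that $\beta_\varphi$ (and hence $\tilde\beta_\varphi$) is surjective, even though surjectivity is not written into the theorem statement. This is where Lemmas~\ref{lem-bisectionSupBound}, \ref{lem-effectiveGrpdBisectionRecoveryProperty}, and \ref{lem-weirdTopoResult} are used: one supposes some $[n,x]\in\Sigma$ lies outside the image, shows $[\alpha_n,x]$ cannot lie in the interior of the closure of any $\beta_\varphi(U_f)$ (using effectiveness of the Weyl groupoid), and then produces a uniform lower bound $\|\hat n - \sum_k\varphi(f_k)\|\ge\sqrt{n^*n(x)}/2$ against all finite sums from $\Cc_c(H,\Omega)$, contradicting surjectivity of $\varphi$. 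This surjectivity is what makes Corollary~\ref{cor-WeylGrpdUniqueAmongEffectiveGrpds} go through (injectivity alone on an effective $H$ does not give an isomorphism of twists), so if you intend your proof to support the uniqueness corollary you must supply this argument as well.
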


	To prove Theorem~\ref{thm-groupoidSurjectionProperty} we will need a few technical lemmata.
	Lemmata~\ref{lem-boundOnNormalisersGrpdAlg} and \ref{lem-bisectionSupBound} give bounds for the norms of some special elements of the essential \(C^*\)-algebra, and Lemma~\ref{lem-effectiveGrpdBisectionRecoveryProperty} is used to remove topological complications while retaining enough local information for our purposes.
	
	\begin{lemma}\label{lem-boundOnNormalisersGrpdAlg}
		Let $U,V\subseteq H$ be open bisections in a twisted groupoid $(H,\Omega)$.
		Suppose that $U\cap V=\emptyset$.
		Then for $f\in\Cc_c(U,\Omega)$ and $g\in\Cc_c(V,\Omega)$ we have $||f||\leq ||f+g||$ in the essential groupoid $C^*$-algebra.
		\begin{proof}
			The conditional expectation $E:C^*_\ess(H,\Omega)\to\Mloc(C_0(H^{(0)}))$ is contractive, and for sections supported on open bisections acts by restricting to the unit space.
			The functions $f^*g$ and $g^*f$ have supports contained in $U^{-1}V$ and $V^{-1}U$ respectively, which do not intersect the unit space $H^{(0)}$ since $U\cap V=\emptyset$, hence \(E(f^*g)=E(g^*f)=0\).
			Since $f$ and $g$ are supported on bisections, $f^*f$ and $g^*g$ are functions on the unit space, so $E(f^*f)=f^*f$ and $E(g^*g)=g^*g$.
			Moreover, $C_0(H^{(0)})$ is a commutative $C^*$-algebra so $||f^*f+g^*g||\geq ||f^*f||$.
			Thus we have
			\begin{align*}
				||f+g||^2&=||(f+g)^*(f+g)||\\
				&=||f^*f+f^*g+g^*f+g^*g||\\
				&\geq||E(f^*f+f^*g+g^*f+g^*g)||\\
				&=||E(f^*f)+E(g^*g)||\\
				&=||f^*f+g^*g||\\
				&\geq||f^*f||\\
				&=||f||^2.\qedhere
			\end{align*}
		\end{proof}
	\end{lemma}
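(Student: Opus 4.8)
The plan is to leverage the generalised expectation $P:\bB(H,\Omega)\to\bB(H^{(0)},\Omega)$ that restricts a Borel section to the unit space. By \cite[Proposition~7.10]{KM2} this map is contractive, so it can only decrease norms; the strategy is therefore to bound $||f+g||$ from below by the norm of $P\big((f+g)^*(f+g)\big)$, which lands in the commutative algebra $C_0(H^{(0)})$ where norm computations are transparent. The whole argument thus reduces the $C^*$-norm estimate in the essential algebra to an elementary comparison of positive functions on the unit space.

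First I would expand $(f+g)^*(f+g)=f^*f+f^*g+g^*f+g^*g$ and analyse the supports of the four summands. Since $f$ and $g$ are sections over the bisections $U$ and $V$, the diagonal terms $f^*f$ and $g^*g$ are supported on the unit space, so they are fixed by $P$. The crucial observation is that the two cross terms vanish under $P$: the product $f^*g$ is supported in $U^{-1}V$ and $g^*f$ in $V^{-1}U$, and the disjointness hypothesis $U\cap V=\emptyset$ forces these sets to avoid $H^{(0)}$. Indeed, if $\eta^{-1}\zeta$ were a unit for some $\eta\in U$, $\zeta\in V$ with $r(\eta)=r(\zeta)$, then multiplying by $\eta$ on the left yields $\zeta=\eta$, placing a common element in $U\cap V$ and contradicting disjointness. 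Hence $P(f^*g)=P(g^*f)=0$.

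With these support facts established, the contractivity of $P$ gives
\[||f+g||^2=||(f+g)^*(f+g)||\geq||P((f+g)^*(f+g))||=||f^*f+g^*g||.\]
The final ingredient is that $C_0(H^{(0)})$ is commutative, so its norm is monotone on positive elements: as $f^*f$ and $g^*g$ are both positive, $||f^*f+g^*g||\geq||f^*f||=||f||^2$. Chaining the two inequalities yields $||f+g||\geq||f||$, as claimed.

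I expect the only genuinely delicate point to be the support computation guaranteeing that the cross terms are annihilated by $P$; once the disjointness of $U$ and $V$ is correctly transported into the statement that $U^{-1}V$ and $V^{-1}U$ miss the unit space, everything else follows immediately from contractivity of the expectation and the monotonicity of the supremum norm on positive functions in the commutative algebra $C_0(H^{(0)})$.
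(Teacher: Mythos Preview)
Your proposal is correct and follows essentially the same argument as the paper: both use the contractive generalised expectation $P$ from \cite[Proposition~7.10]{KM2}, kill the cross terms $f^*g$ and $g^*f$ via the support observation that $U^{-1}V$ and $V^{-1}U$ miss $H^{(0)}$ when $U\cap V=\emptyset$, and finish with monotonicity of the norm on positive elements in the commutative algebra $C_0(H^{(0)})$. Your write-up even makes the support argument for the cross terms slightly more explicit than the paper does.
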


	For the following lemma we remind the reader that the reduced \(C^*\)-algebra \(C^*_\red(H,\Omega)\) can be defined as a space of sections of the canonical line bundle associated to the twist \(\Omega\).
	As the essential \(C^*\)-algebra is a quotient of the reduced, one may choose genuine functions as representatives of elements of the essential \(C^*\)-algebra in the reduced \(C^*\)-algebra and compute using these.

	\begin{lemma}\label{lem-bisectionSupBound}
		Let $f\in C^*_\ess(H,\Omega)$ and suppose that $f$ is represented by a section $g\in C^*_\red(H,\Omega)$ of the line bundle.
		If $g$ has continuous restriction to some open bisection $U\subseteq H$ then
		\[\sup_{\gamma\in U}|g(\gamma)|\leq||f||.\]
		\begin{proof}
			Since $g$ is continuous on $U$ the supremum $\sup_{\gamma\in U}|g(\gamma)|$ is equal to $\sup_{\gamma\in C}|g(\gamma)|$ for any comeagre subset $C\subseteq U$.
			Thus taking an infimum over comeagre subsets of $U$ changes nothing, and we see 
			$$\sup_{\gamma\in U}|g(\gamma)|^2=\inf_{C\subseteq U}\sup_{\gamma\in C}|g(\gamma)|^2\leq\inf_{D\subseteq H^{(0)}}\sup_{x\in D}|g^*g(x)|= ||EL(g^*g)||=||EL(f^*f)||\leq||f||^2,$$
			where $D\subseteq H^{(0)}$ ranges over all comeagre subsets of $H^{(0)}$.
		\end{proof}
	\end{lemma}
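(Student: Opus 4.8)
The plan is to estimate $\sup_{\gamma\in U}|g(\gamma)|$ by pushing everything down to the unit space via the restriction map $EL$ and exploiting that the norm on $\Mloc(C_0(H^{(0)}))=\bB(H^{(0)})/\mM(H^{(0)})$ is the essential supremum, i.e.\ the infimum over comeagre sets of the ordinary supremum.

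First I would set up two preliminary facts. Since $U$ is an open bisection, the source map restricts to a homeomorphism $s|_U\colon U\to s(U)$ onto an open subset of $H^{(0)}$; as $H^{(0)}$ is locally compact Hausdorff it is a Baire space, hence so is $U$, so every comeagre subset of $U$ is dense. Because $g$ is continuous on $U$, this gives $\sup_{\gamma\in U}|g(\gamma)|=\sup_{\gamma\in C}|g(\gamma)|$ for every comeagre $C\subseteq U$, and hence $\sup_{\gamma\in U}|g(\gamma)|^2=\inf_C\sup_{\gamma\in C}|g(\gamma)|^2$, the infimum being over comeagre $C\subseteq U$. Secondly, viewing the product $g^*g$ in $C^*_\red(H,\Omega)\subseteq\bB(H,\Omega)$ as a convolution of sections, its restriction to the (trivially twisted) unit space satisfies $(g^*g)(x)=\sum_{s(\mu)=x}|g(\mu)|^2\geq|g(\gamma)|^2$ for any $\gamma$ with $s(\gamma)=x$, since $|g(\gamma)|^2$ is one term of a sum of non-negative numbers.

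Next I would connect comeagre subsets of $U$ with those of $H^{(0)}$. Given a comeagre $D\subseteq H^{(0)}$, the set $C_D:=(s|_U)^{-1}(D\cap s(U))$ is comeagre in $U$, as $D\cap s(U)$ is comeagre in $s(U)$ and $s|_U$ is a homeomorphism; and for $\gamma\in C_D$ the estimate above gives $|g(\gamma)|^2\leq(g^*g)(s(\gamma))\leq\sup_{x\in D}(g^*g)(x)$. Therefore
$$\sup_{\gamma\in U}|g(\gamma)|^2=\inf_C\sup_{\gamma\in C}|g(\gamma)|^2\leq\sup_{\gamma\in C_D}|g(\gamma)|^2\leq\sup_{x\in D}(g^*g)(x),$$
and since $D$ was an arbitrary comeagre subset of $H^{(0)}$, taking the infimum over all such $D$ yields $\sup_{\gamma\in U}|g(\gamma)|^2\leq\inf_D\sup_{x\in D}(g^*g)(x)=\|EL(g^*g)\|$, the last quantity being the norm in $\Mloc(C_0(H^{(0)}))$.

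Finally, since $g$ represents $f$ and the embedding $C^*_\red(H,\Omega)\hookrightarrow\bB(H,\Omega)$ of \cite[Proposition~7.10]{KM2} is a ${}^*$-homomorphism, $g^*g$ represents $f^*f$, so $EL(g^*g)=EL(f^*f)$; and $EL$, being a generalised expectation, is contractive, whence $\|EL(f^*f)\|\leq\|f^*f\|=\|f\|^2$. Chaining these inequalities gives $\sup_{\gamma\in U}|g(\gamma)|^2\leq\|f\|^2$, as required. The one genuinely nontrivial point is the middle step — that source-fibre preimages carry comeagre subsets of $H^{(0)}$ to comeagre subsets of $U$, so that the essential supremum of $g^*g$ over $H^{(0)}$ dominates the honest supremum (available by continuity) of $|g|$ over $U$; this is precisely where the open-bisection hypothesis and the Baire property of $H^{(0)}$ enter, and the rest is formal.
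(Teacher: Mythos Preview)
Your proof is correct and follows essentially the same line as the paper's: continuity on $U$ plus the Baire property makes the supremum over $U$ equal to the infimum-of-suprema over comeagre subsets, one then compares with the essential supremum of $g^*g$ on $H^{(0)}$ (which is $\|EL(g^*g)\|$), and finishes via $EL(g^*g)=EL(f^*f)$ and contractivity of $EL$. You spell out several points the paper leaves implicit---that $U$ inherits the Baire property through the homeomorphism $s|_U$, the pointwise bound $|g(\gamma)|^2\leq (g^*g)(s(\gamma))$ coming from the convolution formula, and the pullback of comeagre sets along $s|_U$---but these are exactly the justifications behind the paper's displayed chain of inequalities, not a different argument.
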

	
	\begin{lemma}\label{lem-effectiveGrpdBisectionRecoveryProperty}
		Let $G$ be an effective groupoid with locally compact Hausdorff unit space.
		Let $U\subseteq G$ be an open bisection, and let $\overline{U}^\circ$ be the interior of the closure of $U$.
		Then $U=\overline{U}^\circ\cdot s(U)=r(U)\cdot\overline{U}^\circ$.
		\begin{proof}
			The inclusion $U\subseteq\overline{U}^\circ\cdot s(U)$ is clear.
			Fix $\gamma\in\overline{U}^\circ\cdot s(U)$.
			Then there exists $\eta\in U$ with $s(\eta)=s(\gamma)$, and we have $\gamma\eta^{-1}\in\overline{U}^\circ\cdot s(U)\cdot U^{-1}=\overline{U}^\circ U^{-1}$.
			We claim this is contained in \(\overline{UU^{-1}}\).
			Fix a net \((\gamma_\lambda)\) contained in \(U\) which converges to \(\gamma\).
			Then the image \((s(\gamma_\lambda))\) under the source map is a net converging to \(s(\gamma)=s(\eta)\), and since \(s\) restricts to a homeomorphism \(U\to s(U)\) we see that \((\gamma_\lambda)\) also converges to \(s|_U^{-1}(s(\eta))=\eta\).
			Hence the net \((\gamma_\lambda\gamma_\lambda^{-1})\) converges to \(\gamma\eta^{-1}\), so \(\gamma\eta^{-1}\) is contained in the closure of \(UU^{-1}\), which is itself contained in the closure of the unit space of \(G\).
			Since the multiplication map in an \'etale groupoid is open (see e.g. \cite[Lemma~2.4.11]{Sims_EtaleGrpds}), we further see that \(\overline{U}^\circ U^{-1}\) is contained in \(\overline{G^{(0)}}^\circ\).
			Lastly, as $G$ is effective and the closure of the unit space of a groupoid with Hausdorff unit space consists of isotropy by Lemma~\ref{lem-HDUnitsCloseInIsot}, we see that $\overline{G^{(0)}}^\circ=G^{(0)}$ and so $\gamma\eta^{-1}\in G^{(0)}$, and thus $\gamma=\eta\in U$, and so $\overline{U}^\circ\cdot s(U)= U$.
			A similar argument shows $r(U)\cdot\overline{U}^\circ= U$.
		\end{proof}
	\end{lemma}

	\begin{proof}[Proof of Theorem~\textup{\ref{thm-groupoidSurjectionProperty}}]
		First we shall establish that a map $\tilde\beta_\varphi:\Omega\to \Sigma$ exists.	
		Fix $\sigma\in\Omega$ and let $U\subseteq H$ be a bisection with $\sigma\in\Omega|_U$.
		Let $\theta_\varphi$ be the inverse to \mbox{$\varphi|_{H^{(0)}}^*:G^{(0)}\to H^{(0)}$}; the homeomorphism induced by considering the restricted isomorphism of commutative $C^*$-algebras $\varphi|_{H^{(0)}}:C_0(H^{(0)})\to C_0(G^{(0)})$.
		Viewing elements of $\Cc_c(U,\Omega)$ as continuous functions $\Omega|_U\to\CC$ satisfying $f(\tau z)=f(\tau)\bar{z}$ for $z\in\TT$, let $f_\sigma:\Omega|_U\to\CC$ be such a continuous compactly supported function with $f(\sigma)=1$.
		Then $\varphi(f_\sigma)$ is a normaliser of $C_0(G^{(0)})\subseteq C^*_\ess(G,\Sigma)$, and $\varphi(f_\sigma^*f_\sigma)(\theta_\varphi(s(\sigma)))=(f_\sigma^*f_\sigma)(s(\sigma))=|f_\sigma(\sigma)|^2=1$, so $\theta_\varphi(s(\sigma))\in\dom(\varphi(f_\sigma))$.
		Thus $\tilde\beta_{\varphi}(\sigma,U, f_\sigma):=[\varphi(f_\sigma),\theta_\varphi(s(\sigma))]$ is an element of the Weyl twist $\Sigma$.
		
		We claim that $\tilde\beta_\varphi(\sigma, U,f_\sigma)$ does not depend on the choice of $U\subseteq H$ or $f_\sigma\in\Cc_c(U,\Sigma)$.
		To see this, fix another bisection $U'\subseteq H$ with $\sigma\in\Omega|_{U'}$ and $g_\sigma\in\Cc_c(U',\Omega)$ with $g_\sigma(\sigma)=1$.
		
		Then $U\cap U'$ is non-empty since \(\sigma\) is contained in a fibre over \(\Omega|_{U\cap U'}\), and there exists $h\in C_0(X)$ with $\supp(h)\subseteq s(U\cap U')$ and $h|_K=1$ on a compact neighbourhood $K$ of $s(\sigma)$.
		The functions $f_\sigma h$ and $g_\sigma h$ are both compactly supported sections with support contained in $U\cap U'$.
		Thus there are functions $a,a'\in C_0(X)$ with $a(s(\sigma))=1=a'(s(\sigma))$ and $f_\sigma h a=g_\sigma h a'$.
		We then have $\varphi(f_\sigma)\varphi(ha)=\varphi(g_\sigma)\varphi(ha')$ and $\varphi(ha)(\theta_\varphi(s(\sigma)))=ha(s(\sigma))=1=ha'(s(\sigma))=\varphi(ha')(\theta_\varphi(s(\sigma)))$, so $[\varphi(f_\sigma),\theta_\varphi(s(\sigma))]=[\varphi(g_\sigma),\theta_\varphi(s(\sigma))]$ and $\tilde\beta_\varphi$ depends only on $\sigma\in\Omega$.
		We shall now simply write \(\tilde\beta(\sigma)\) for the element \(\tilde\beta(\sigma,U,f_\sigma)\), which defines our function \(\tilde\beta\colon\Omega\to\Sigma\).
		
		To see that $\tilde\beta_\varphi$ is a groupoid homomorphism, fix a composable pair $(\sigma,\tau)\in\Omega^{(2)}$.
		For all $a\in C_0(G^{(0)})$ we have
		\begin{align*}
			a(r(\tilde\beta_\varphi(\tau)))&=a(\alpha_{\varphi(f_\tau)}(\theta_\varphi(s(\tau))))\\
			&=a(\alpha_{\varphi(f_\tau)}(\theta_\varphi(s(\tau))))\varphi(f_\tau^*f_\tau)(\theta_\varphi(s(\tau)))\\
			&=(\varphi(f_\tau)^*a\varphi(f_\tau))(\theta_\varphi(s(\tau)))\\
			&=f_\tau^*\varphi^{-1}(a)f_\tau(s(\tau))\\
			&=\varphi^{-1}(a)(r(\tau))f_\tau^*f_\tau(s(\tau))\\
			&=a(\theta_\varphi(r(\tau)))\\
			&=a(\theta_\varphi(s(\sigma)))\\
			&=a(s(\tilde\beta_\varphi(\sigma))).
		\end{align*}
		Since this holds for all $a\in C_0(G^{(0)})$, we see that $r(\tilde\beta_\varphi(\tau))=s(\tilde\beta_\varphi(\sigma))$, so the pair $(\tilde\beta_\varphi(\sigma),\tilde\beta_\varphi(\tau))$ is composable.
		
		Let $U,V\subseteq H$ be bisections with $\sigma\in\Omega|_U$ and $\tau\in\Omega|_V$, and let $f_\sigma\in\Cc_c(U,\Omega)$ and $f_\tau\in\Cc_c(V,\Omega)$ be functions with $f_\tau(\sigma)=f_\tau(\tau)=1$.
		The product $f_\sigma f_\tau$ belongs to $\Cc_c(UV,\Omega)$, and since both functions are supported on bisections we have $(f_\sigma f_\tau)(\sigma\tau)=f_\sigma(\sigma)f_\tau(\tau)=1$, whereby
		\begin{align*}
			\tilde\beta_\varphi(\sigma\tau)&=[\varphi(f_\sigma f_\tau),\theta_\varphi(s(\sigma\tau))]\\
			&=[\varphi(f_\sigma),\theta_\varphi(s(\sigma))][\varphi(f_\tau),\theta_\varphi(s(\tau))]\\
			&=\tilde\beta_\varphi(\sigma)\tilde\beta_\varphi(\tau).
		\end{align*}			
		We now show that $\tilde\beta_\varphi$ is a local homeomorphism.
		The topology on $\Sigma$ is generated by open sets specified by normalisers $n\in N(A,B)$ and the homeomorphisms 
		$$G_n:\TT\times\dom(n)\to\Sigma|_{U_n},\qquad G_n(t,x)=[tn,x].$$
		For $\sigma\in\Omega$ and an open bisection $U\subseteq H$ with $\sigma\in\Omega|_U$, pick $f_\sigma\in\Cc_c(U,\Omega)$ with $f_\sigma(\sigma)=1$.
		Let $U_\sigma\subseteq\Omega$ be the open support of $f_\sigma$.
		Define the map $H_\sigma:\TT\times s(U_\sigma)\to U_\sigma$ by $H_\sigma(z,x)=z\tau_x$, where $\tau_x\in U_\sigma$ is the unique element of $U_\sigma$ with $s(\tau_x)=x$ and $f_\sigma(\tau_x)>0$.
		Then $H_\sigma$ is a homeomorphism since $f_\sigma$ is continuous and non-zero on $U_\sigma$.
		Note now that $f_\sigma(z\tau_x)=\bar{z}f_\sigma(\tau_x)$, so $zf_\sigma(z\tau_x)>0$ for any $\tau_x\in U_\sigma$ with $f_\sigma(\tau_x)>0$, giving $\tilde\beta(z\tau_x)=[z\varphi(f_\sigma),\theta_\varphi(s(\tau_x))]$.
		Letting $n_\sigma:=\varphi(f_\sigma)$, for $(z,x)\in \TT\times s(U_\sigma)$  we compute
		\begin{align*}
			G_{n_\sigma}^{-1}\circ\tilde\beta_\varphi\circ H_{\sigma}(z,x)&=G_{n_\sigma}^{-1}\circ\tilde\beta_\varphi(z\tau_x)\\
			&=G_{n_\sigma}^{-1}[z\varphi(f_\sigma),\theta_\varphi(s(\tau_x))]\\
			&=G_{n_\sigma}^{-1}[zn_\sigma,\theta_\varphi(s(\tau_x))]\\
			&=(z,\theta_\varphi(s(\tau_x))).
		\end{align*} 
		Thus the composition $G_{n_\sigma}^{-1}\circ\tilde\beta_\varphi\circ H_{\sigma}$ agrees with the homeomorphism $\Id_\TT\times\theta_\varphi$ on its domain, implying that $\tilde\beta_\varphi$ is a homeomorphism on $U_\sigma$.
		Hence $\tilde\beta_\varphi$ is a local homeomorphism.
		This argument also shows that $\tilde\beta_\varphi$ descends to a local homeomorphism $\beta_\varphi:H\to G$, as one need only ignore the $\TT$-component in the homeomorphisms $H_\sigma$ and $G_{n_\sigma}$.
		Moreover, since \(H_\sigma\) and \(G_{n_\sigma}\) commute with the \(\TT\)-actions, we also see \(\tilde\beta_\varphi(z\sigma)=z\tilde\beta_\varphi(\sigma)\) for all \(\sigma\in\Omega\) and hence the first square in Definition~\ref{defn-twistHom} commutes.
		Hence \((\beta_\varphi,\tilde\beta_\varphi)\) forms a homomorphism of twists.\\
		
		To see that each \(\tilde\beta_\varphi\) and \(\beta_\varphi\) are surjective, it suffices to show that \(\tilde\beta_\varphi\) is.
		To this end, fix \([n,x]\in\Sigma\), and let \(U\subseteq G\) be an open bisection neighbourhood of \([\alpha_n,x]\).
		Pick \(g\in\Cc_c(U,\Sigma)\) such that there is a neighbourhood \(W\subseteq U\) of \([n,x]\) with \(|g(\nu)|=1\) for all \(\nu\in\Sigma|_W\) (viewing \(g\) as a function \(\Sigma\to\CC\)).
		Since \(\varphi\) is surjective and the image of \(\Cc_c(H,\Omega)\) in \(C^*_\ess(H,\Omega)\) is dense, there are open bisections \(U_1,\dots,U_n\subseteq H\) and elements \(f_i\in\Cc_c(U_i,\Omega)\) such that 
		\[\left|\left|\sum_{i=1}^n \varphi(f_i)-g\right|\right|<\frac12.\]
		Each \(f_i\) is a normaliser of \(C_0(H^{(0)})\) in \(C^*_\ess(H,\Omega)\), and hence the \(\varphi(f_i)\) normaliser \(C_0(G^{(0)})\) since \(\varphi\) restricts to an isomorphism of these subalgebras.
		Moreover, since the \(f_i\) can be picked to have compact support, the images \(\varphi(f_i)^*\varphi(f_i)\) of \(f_i^*f_i\) belong to \(C_c(G^{(0)})\), so the \(\varphi(f_i)\) are normalisers of \(C_0(G^{(0)})\) in \(C^*_\ess(G,\Sigma)\) with compact support.
		Hence we may apply Lemma~\ref{lem-localEvaluationMap}, yielding functions \(\widehat{\varphi(f_i)}\in\Cc_c(U_{\varphi(f_i)},\Sigma)\) supported on the sets \(U_{\varphi(f_i)}\) as in (\ref{eq-UnBisections}), which are then mapped to \(\varphi(f_i)\) in \(C^*_\ess(G,\Sigma)\) by the map in Lemma~\ref{lem-mapsToBAndCcGTwist}.
		We claim that at least one \(f_i\) satisfies \(\widehat{\varphi(f_i)}[n,x]\neq 0\).
		Suppose, for a contradiction, that \(\widehat{\varphi(f_i)}[n,x]=0\) for all \(i=1,\dots,n\).
		By \cite[Lemma~7.13]{KwasMeyer_EssCrossProd}, there is a comeagre subset \(C\subseteq W\) on which each of the \(\widehat{\varphi(f_i)}\) are continuous, and hence there is an open neighbourhood \(W'\subseteq W\) of \([\alpha_n,x]\) such that 
		\[|\widehat{\varphi(f_i)}[m,y]|<\frac1{2n}\]
		for each \(i=1,\dots,n\) and all \([m,y]\in\Sigma|_{W'\cap C}\).
		The section \(g^*\widehat{\varphi(f_i)}\) is a function supported on \(U^*U_i\), and when restricted further to \(U^*W\) this function takes values
		\[\left(g^*\widehat{\varphi(f_i)}\right)(y)=\overline{g(\eta_y)}\widehat{\varphi(f_i)}(\eta_y)\]
		where \(\eta_y\) denotes the unique element of \(W\) with \(s(\eta_y)=y\).
		Taking the norm of this yields
		\[\left|\left(g^*\widehat{\varphi(f_i)}\right)(y)\right|=\left|\widehat{\varphi(f_i)}(\eta_y)\right|,\]
		as \(\eta_y\in W\) implies \(|g(\eta_y)|=1\) by choice of \(g\).
		We now analyse
		\begin{align*}
			\left|\left|\sum_{i=1}^n\varphi(f_i)-g\right|\right|&\geq \left|\left|\sum_{i=1}^n g^*\varphi(f_i)-g^*g\right|\right|\\
			&\geq \left|\left|EL\left(\sum_{i=1}^n g^*\varphi(f_i)\right)-g^*g\right|\right|\\
			&=\sup_{\substack{D\subseteq G^{(0)}\\\text{comeagre}}}\sup_{y\in D}\left|\sum_{i=1}^n \left(g^*\widehat{\varphi(f_i)}\right)(y)-g^*g(y)\right|\\
			&\geq \sup_{\substack{D\subseteq s(W')\\\text{comeagre}}}\sup_{y\in D}\left|\sum_{i=1}^n \left(g^*\widehat{\varphi(f_i)}\right)(y)-g^*g(y)\right|\\
			&\geq\sup_{\substack{D\subseteq s(W')\\\text{comeagre}}}\sup_{y\in D}\left|\left|\sum_{i=1}^n \left(g^*\widehat{\varphi(f_i)}\right)(y)\right|-\left|g^*g(y)\right|\right|\\
			&\geq\sup_{y\in s(C)}\left|\sum_{i=1}^n \widehat{\varphi(f_i)}(\eta_y)-1\right|\\
			&>1-n\frac1{2n}\\
			&=\frac12,
		\end{align*}
		contradicting \(\left|\left|\sum_{i=1}^n\varphi(f_i)-g\right|\right|<\frac12\).
		Hence \(\widehat{\varphi(f_i)}[n,x]\) must be non-zero for some \(i\).
		The formula in Lemma~\ref{lem-localEvaluationMap} implies \(x\) belongs to the domains of the partial homeomorphisms \(\alpha_n\) and \(\alpha_{\varphi(f_i)}\), and moreover that the germs of these partial homeomorphisms agree at \(x\).
		This implies \([\alpha_{\varphi(f_i)},x]=[\alpha_n,x]\), whereby \([z\varphi(f_i),x]=[n,x]\) for some \(z\in\TT\).
		Moreover, \(f_i^*f_i(\theta_\varphi(x))=\varphi(f_i^*f_i)(x)\neq 0\) so there is a unique element \(\sigma\in\Omega\) in the support of \(f_i\) with \(\theta_\varphi(s(\sigma))=x\) and \(zf_i(\sigma)=1\).
		We then have \(\tilde\beta_\varphi(\sigma)=[\varphi(zf_i),\theta_\varphi(s(\sigma))]=[n,x]\), and thus \(\tilde\beta_\varphi\) is surjective.\\

		Finally we shall show that the (twist over the) unit space $H^{(0)}$ is dense in the kernel of $\tilde\beta_\varphi$.
		The kernel of $\beta_\varphi$, that is, the preimage of the unit space $G^{(0)}$ under $\beta_\varphi$ is open since $G$ is \'etale.
		Let $U\subseteq\ker(\beta_\varphi)$ be an open bisection and suppose that $U\cap H^{(0)}=\emptyset$.
		Then for any $f\in\Cc_c(U,\Omega)$ we have $[\varphi(f),\theta_\varphi(x)]\in G^{(0)}\times\TT$ for any $x\in\dom(f)$, so $\varphi(f)\in C_0(G^{(0)})$.
		Since $\varphi$ restricts to an isomorphism of subalgebras $C_0(H^{(0)})\cong C_0(G^{(0)})$, the function $f$ must represent an element of $C_0(H^{(0)})$ in $C^*_\ess(H,\Omega)$.
		So there is a function $g\in C_0(H^{(0)})$ with \(f=g\) in $C^*_\ess(H,\Omega)$.
		Consider $f$ and $g$ as sections of the line bundle associated to $(H,\Omega)$. 
		Since $U\cap H^{(0)}=\emptyset$, Lemma~\ref{lem-boundOnNormalisersGrpdAlg} gives $||f||\leq ||f-g||=0$, implying that $f=0$.
		Thus $U$ must be empty, so every non-empty open subset of $\ker(\beta_\varphi)$ has non-empty intersection with $H^{(0)}$; equivalently $H^{(0)}$ is dense in $\ker(\beta_\varphi)$.
	\end{proof}
	
	\begin{corollary}\label{cor-WeylGrpdUniqueAmongEffectiveGrpds}
		Let $A\subseteq B$ be an essential Cartan pair and let $(G,\Sigma)$ be the associated Weyl twist.
		Let $(H,\Omega)$ be a twist over an \'etale groupoid with locally compact Hausdorff unit space.
		Suppose there is an isomorphism $\varphi:C^*_\ess(H,\Omega)\to C^*_\ess(G,\Sigma)$ that intertwines conditional expectations and restricts to an isomorphism $\varphi|_{C_0(H^{(0)})}:C_0(H^{(0)})\to C_0(G^{(0)})$.
		If $H$ is effective then the homomorphism $(\beta_\varphi,\tilde\beta_\varphi):(H,\Omega)\to(G,\Sigma)$ in Theorem~\textup{\ref{thm-groupoidSurjectionProperty}} is an isomorphism of twists.
		\begin{proof}
			The kernel of $\beta_\varphi$ is an open normal isotropy group bundle in $H$ by Theorem~\ref{thm-groupoidSurjectionProperty}, and so is contained in the interior of the isotropy of $H$.
			Since $H$ is effective, the interior of the isotropy is exactly the unit space of $H$ and so the map $\beta_\varphi$ is injective, hence an isomorphism between $H$ and $G$.
			
			Suppose $\tilde\beta_\varphi(\omega)\in\Sigma^{(0)}$ for some $\omega\in\Omega$.
			Let $q_\Omega:\Omega\to H$ and $q_\Sigma:\Sigma\to G$ be the canonical quotient maps associated to the twists. Since $(\beta_\varphi,\tilde\beta_\varphi)$ is a homomorphism of twists, we have $\beta_\varphi(q_\Omega(\omega))=q_\Sigma(\tilde\beta_\varphi(\omega))\in G^{(0)}$, so $q_\Omega(\omega)\in H^{(0)}$ as $\beta_\varphi$ is an isomorphism.
			Hence $\omega\in H^{(0)}\times\TT$, so $\omega=(s(\omega),z)$ for some $z\in\TT$.

			Hence we have
			\begin{align*}
				\tilde\beta_\varphi(\omega)&=\tilde\beta_\varphi(z\cdot s(\omega))\\
				&=z\tilde\beta_\varphi(s(\omega))\\
				&=z\cdot s(\tilde\beta_\varphi(\omega))\\
				&=z\cdot \tilde\beta_\varphi(\omega),
			\end{align*}
			where the second equality holds since \(\tilde\beta_\varphi\) comes from a homomorphism of twists by Theorem~\ref{thm-groupoidSurjectionProperty}.
			This then yields \(z=1\), and hence $\tilde\beta_\varphi$ and \(\beta_\varphi\) are injective, so yield an isomorphism of twists.
		\end{proof}
	\end{corollary}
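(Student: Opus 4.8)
The plan is to deduce the statement almost entirely from Theorem~\ref{thm-groupoidSurjectionProperty}. That theorem already hands us a homomorphism of twists $(\beta_\varphi,\tilde\beta_\varphi)\colon(H,\Omega)\to(G,\Sigma)$ that is surjective, restricts to a homeomorphism of unit spaces, is a local homeomorphism at the level of $\tilde\beta_\varphi$ (hence also of $\beta_\varphi$), and has $H^{(0)}$ dense in $\ker(\beta_\varphi)$. So the only thing left to do is upgrade surjectivity to bijectivity, first for $\beta_\varphi$ and then for $\tilde\beta_\varphi$, and this is exactly where the effectiveness hypothesis on $H$ enters.

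First I would examine $\ker(\beta_\varphi)=\beta_\varphi^{-1}(G^{(0)})$. Since $\beta_\varphi$ restricts to a bijection of unit spaces, for $h\in\ker(\beta_\varphi)$ one has $\beta_\varphi(r(h))=r(\beta_\varphi(h))=\beta_\varphi(h)=s(\beta_\varphi(h))=\beta_\varphi(s(h))$, whence $r(h)=s(h)$; thus $\ker(\beta_\varphi)$ is contained in the isotropy bundle of $H$. It is also open: $G$ is \'etale, so $G^{(0)}$ is open in $G$, and $\beta_\varphi$ is continuous. An open subset of the isotropy bundle is contained in the interior of the isotropy, which equals $H^{(0)}$ precisely because $H$ is \emph{effective}; hence $\ker(\beta_\varphi)=H^{(0)}$, i.e.\ $\beta_\varphi$ is injective. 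A continuous bijective local homeomorphism is a homeomorphism, so $\beta_\varphi$ is an isomorphism of \'etale groupoids.

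Next I would lift injectivity to the twists. Suppose $\tilde\beta_\varphi(\omega)=\tilde\beta_\varphi(\omega')$ with $\omega,\omega'\in\Omega$. Applying the twist quotients and using that $(\beta_\varphi,\tilde\beta_\varphi)$ is a morphism of twists gives $\beta_\varphi(q_\Omega(\omega))=\beta_\varphi(q_\Omega(\omega'))$, so $q_\Omega(\omega)=q_\Omega(\omega')$ by injectivity of $\beta_\varphi$, and therefore $\omega'=z\cdot\omega$ for a unique $z\in\TT$. Morphisms of twists are $\TT$-equivariant — this follows from multiplicativity of $\tilde\beta_\varphi$ together with the commuting square on $H^{(0)}\times\TT$, since the $\TT$-action on $\Omega$ is left multiplication by the image of $(s(q_\Omega(\omega)),z)$ under $H^{(0)}\times\TT\hookrightarrow\Omega$ — so $\tilde\beta_\varphi(\omega')=z\cdot\tilde\beta_\varphi(\omega)$. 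As $\TT$ acts freely on the central extension $\Sigma$, we conclude $z=1$ and $\omega=\omega'$. Thus $\tilde\beta_\varphi$ is a bijective local homeomorphism, hence a homeomorphism.

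Finally, with $\beta_\varphi$ and $\tilde\beta_\varphi$ both homeomorphic groupoid isomorphisms compatible with the two central extensions and the $\TT$-actions, the inverse pair $(\beta_\varphi^{-1},\tilde\beta_\varphi^{-1})$ is a homomorphism of twists (the required squares are obtained by inverting those for $(\beta_\varphi,\tilde\beta_\varphi)$), so $(\beta_\varphi,\tilde\beta_\varphi)$ is an isomorphism of twists. Given Theorem~\ref{thm-groupoidSurjectionProperty}, essentially everything here is routine; the one point that needs genuine care is the global $\TT$-equivariance of $\tilde\beta_\varphi$ used in the third paragraph, since the defining diagram of a morphism of twists only constrains $\tilde\beta_\varphi$ on $H^{(0)}\times\TT$ a priori, and one must propagate this to all of $\Omega$ via multiplicativity.
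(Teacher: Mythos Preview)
Your proof is correct and, for the injectivity of $\beta_\varphi$, essentially identical to the paper's. For the injectivity of $\tilde\beta_\varphi$ you take a slightly different route: the paper argues that the preimage of $\Sigma^{(0)}$ under $\tilde\beta_\varphi$ is $\Omega^{(0)}$ by writing $\omega=(s(\omega),z)\in H^{(0)}\times\TT$ and then using the concrete description $\tilde\beta_\varphi(\sigma)=[\varphi(f_\sigma),\theta_\varphi(s(\sigma))]$ from the proof of Theorem~\ref{thm-groupoidSurjectionProperty}, together with the section equivariance $f(z\sigma)=\bar z f(\sigma)$, to force $z=1$. You instead observe directly that if $\tilde\beta_\varphi(\omega)=\tilde\beta_\varphi(\omega')$ then $\omega'=z\cdot\omega$ for some $z\in\TT$ (via injectivity of $\beta_\varphi$), and then invoke global $\TT$-equivariance of $\tilde\beta_\varphi$ plus freeness of the $\TT$-action on $\Sigma$. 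Your argument is a bit more structural and avoids unpacking the explicit formula for $\tilde\beta_\varphi$; the paper's is more concrete but relies on details from the earlier construction. Your aside about why $\TT$-equivariance holds globally (multiplicativity plus the commuting square over $H^{(0)}\times\TT$) is exactly the point that needs checking, and you handle it correctly.
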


	We thank an anonymous referee for providing a streamlined argument showing the injectivity of \(\tilde\beta\) in Corollary~\ref{cor-WeylGrpdUniqueAmongEffectiveGrpds}.

	We have seen in Lemma~\ref{lem-weylGrpdIsEffective} that the Weyl groupoid associated to an essential Cartan pair is effective.
	We now show that twists over effective groupoids give rise to essential Cartan pairs; this amounts to showing that \(C_0(G^{(0)})\) is a maximal commutative subalgebra of \(C^*_\ess(G,\Sigma)\) if \(G\) is effective.
	For Hausdorff groupoids this is a fairly straightforward proof, but typically relies on being to evaluate sections in the reduced \(C^*\)-algebra at individual points.
	Instead, we must show that a section which commutes with the subalgebra of functions on the unit space must have a comeagre amount of its support contained in the unit space, and moreover it can be represented continuously there. 
	
	\begin{proposition}[cf. {\cite[2.4.7]{Renault_GrpdApproach}}]\label{prop-effectiveGrpdGivesMasa}
		Let \(G\) be an effective groupoid and let \(\Sigma\) be a twist over \(G\).
		The inclusion \(C_0(G^{(0)})\subseteq C^*_\ess(G,\Sigma)\) is maximal commutative.
		\begin{proof}
			Fix \(f\in C^*_\ess(G,\Sigma)\) commuting with \(C_0(G^{(0)})\) and pick \(\tilde{f}\in C^*_\red(G,\Sigma)\) with image equal to \(f\) in the essential quotient.
			
			Applying \cite[Lemma~7.13]{KwasMeyer_EssCrossProd} we see that there is a comeagre subset \(C\subseteq G\) on which \(\tilde{f}\) is continuous.
			If the support of \(\tilde{f}\) does not intersect \(C\) then \(f=0\) by \cite[Proposition~7.18]{KwasMeyer_EssCrossProd}.
			Else, for \(\gamma\in C\) with \(\tilde{f}(\gamma)\neq 0\), there is \(\eps>0\) and a neighbourhood bisection \(U\subseteq G\) of \(\gamma\) with \(|f(\gamma)|>\eps\) for all \(\gamma\in U\cap C\).
			Then, for all \(\eta\in U\cap C\) and all \(g\in C_0(G^{(0)})\) we have
			\begin{align*}
				g(r(\eta))\tilde{f}(\eta)&=(g\tilde{f})(\eta)\\
				&=(\tilde{f}g)(\eta)\\
				&=\tilde{f}(\eta)g(s(\eta)).
			\end{align*}
			Since \(\tilde{f}(\eta)\) is non-zero for all \(\eta\in U\cap C\) and functions \(g\in C_0(G^{(0)})\) separate points in \(G^{(0)}\), we see that \(r(\eta)=s(\eta)\).
			Hence \(U\cap C\) is contained in the isotropy of \(G\), and so is \(U\) since \(C\subseteq G\) is comeagre (hence dense).
			Then \(U\) is a bisection contained in the interior of the isotropy of \(G\), therefore \(U\subseteq G^{(0)}\) since \(G\) is effective.
			Thus \(\tilde{f}\) is continuous on a comeagre subset of \(G^{(0)}\) and (up to a comeagre subset) is supported there, so \(f\in C_0(G^{(0)})\).
		\end{proof}
	\end{proposition}

	\begin{remark}
		Exel and Pitts define their essential \(C^*\)-algebra of a twist \((G,\Sigma)\) as the quotient of \(C^*_\red(G,\Sigma)\) by the \emph{grey ideal}.
		If \(G\) is topologically principal i.e. the subset of \(G^{(0)}\) of points with non-trivial isotropy fibres is dense, Kwa\'sniewski and Meyer showed that this construction yields an isomorphic \(C^*\)-algebra to their own construction when the groupoid \cite[Section~7]{KwasMeyer_EssCrossProd}.
		However, should \(G\) fail to be topologically principal, the grey ideal is much larger, and there are examples of effective groupoids for which the essential \(C^*\)-algebra collapses to zero entirely.

		Exel and Pitts define \emph{weak Cartan inclusions} \cite[Definition~2.11.5]{ExelPitts_AlgNonHDGrpds} and show that any such inclusion is isomorphic to one arising from a twist over a topologically principal groupoid with their essential twisted groupoid \(C^*\)-algebra \cite[Corollary~3.9.5]{ExelPitts_AlgNonHDGrpds}.
		In their setup, only separable \(C^*\)-algebras and second-countable groupoids are considered, and effective second-countable groupoids are automatically topologically principal, so the two constructions agree in this context.
		The definitions of weak Cartan inclusions and essential Cartan inclusions differ outside of the second-countable setting, the former corresponding to topologically principal groupoids, and the latter corresponding to effective groupoids.
	\end{remark}

	\subsection{Non-uniqueness for non-effective groupoids}
	
	One may ask the question of when two \'etale groupoids give rise to isomorphic essential groupoid $C^*$-algebras.
	We are not able to answer this question in full generality, but we can provide this for the condition when one groupoid is contained in another as an open subgroupoid.
	We answer this question in the case where the twist over the groupoid is trivial.

	As noted in Remark~\ref{rem-singularIdealMeagreSupp}, if \(G\) is an \'etale groupoid and \(H\) is an open subgroupoid then there is an inclusion \(\Cc_c(H)\hookrightarrow\Cc_c(G)\) given by extending functions on \(H\) by zero to \(G\).
	This map is a \({}^*\)-homomorphism, and descends to a homomorphism \(C^*_\ess(H)\to C^*_\ess(G)\).
	This homomorphism is injective on the essential groupoid \(C^*\)-algebras since it intertwines expectations and the expectations are faithful on the essential quotients.
	Hence, we may always consider \(C^*_\ess(H)\subseteq C^*_\ess(G)\) for an open subgroupoid \(H\subseteq G\).
	It is possible that the resulting inclusion is an equality, even if the underlying subgroupoid \(H\subseteq G\) is not trivially all of \(G\), but contains `essentially enough' of the information of \(G\).
	The notion of `essentially enough' can be made rigorous, but to do so we require a technical point-set topological tool.

	\begin{lemma}\label{lem-weirdTopoResult}
		Let $X$ be a topological space.
		Let $x\in X$ be a point and let $U\subseteq X$ be a neighbourhood of $x$.
		Let $V_1,\dots,V_\ell\subseteq X$ be open sets such that $x\notin\overline{V_k}^\circ$ for $1\leq k\leq\ell$.
		That is, $x$ does not lie in the interior of the closure of any of the $V_k$.
		Then $Z:=U\setminus\bigcup_{k=1}^\ell \overline{V_k}$ is a non-empty open set, and $x$ lies in the closure of $Z$.
		\begin{proof}
			Since $Z$ is the intersection of $U$ with the complements of finitely many open sets, we see that $Z$ is open.
			To see that $Z$ is non-empty, we first note that $Z$ can be expressed as
			$$Z=U\setminus\bigcup_{k=1}^\ell \overline{V_k}=\left(U\setminus\bigcup_{k=1}^\ell \partial(\overline{V_k}^\circ)\right)\cap\left(U\setminus\bigcup_{k=1}^\ell \overline{V_k}^\circ\right).$$
			Since each $\partial(\overline{V_k}^\circ)$ is the boundary of an open set, it is nowhere dense, and the finite union \(\bigcup_{k=1}^\ell\partial(\overline{V_k}^\circ)\) is again nowhere dense.
			Thus $Z$ is the complement of a nowhere dense subset of $U\setminus\bigcup_{k=1}^\ell \overline{V_k}^\circ$, and hence is dense.
			The set $U\setminus\bigcup_{k=1}^\ell \overline{V_k}^\circ$ is non-empty since $x$ is not contained in any $\overline{V_k}^\circ$ by hypothesis, so $x$ lies in the closure of $Z$ and in particular $Z$ is non-empty.
		\end{proof}
	\end{lemma}
	
	\begin{proposition}\label{prop-essGrpdCstarAlgEqualCond}
		Let $H\subseteq G$ be an open subgroupoid of an \'etale groupoid with locally compact Hausdorff unit space.
		Consider $C^*_\ess(H)\subseteq C^*_\ess(G)$.
		Then $C^*_\ess(H)=C^*_\ess(G)$ if and only if for every $\gamma\in G$ there exists a bisection $V\subseteq H$ with $\gamma\in \overline{V}^\circ\cdot s(V)$.
		In particular, $H$ is dense in $G$ and $H^{(0)}=G^{(0)}$.
	\end{proposition}
	In the criterion described in Proposition~\ref{prop-essGrpdCstarAlgEqualCond}, the closure of \(V\) is taken in \(G\) rather than \(H\), otherwise the described condition would reduce to \(H=G\) and the equivalence would fail (see Example~\ref{eg-grpdsWithSameEssCstarAlg}).
	\begin{proof}
		Suppose first that for every $\gamma\in G$ there is some bisection $V\subseteq H$ with $\gamma\in\overline{V}^\circ\cdot s(V)$.
		Let $U\subseteq G$ be a bisection neighbourhood of $\gamma$ contained in $\overline{V}^\circ\cdot s(V)$; such a bisection exists since $\overline{V}^\circ\cdot s(V)$ is open.
		We claim that $\Cc_c(U)$ and $\Cc_c(V\cdot s(U))$ are identified in the essential groupoid $C^*$-algebra.
		To show this, fix $f\in\Cc_c(U)$.
		Note that $s|_U^{-1}\circ s:V\cdot s(U)\to U$ is a homeomorphism, so we can define $\tilde{f}:=f\circ s|_{U}^{-1}\circ s$ on $V\cdot s(U)$, and extend by zero to a function on $H$.
		Then $\tilde{f}$ is a continuous and compactly supported function on $V$ since $f$ is, and $s|_U^{-1}\circ s$ is a homeomorphism between $U$ and $V\cdot s(U)$.
		
		We claim $f-\tilde{f}$ has meagre support, which by \cite[Proposition~7.18]{KwasMeyer_EssCrossProd} shows that $f$ and $\tilde{f}$ represent the same element of $C^*_\ess(G)$.
		Since $s|_U^{-1}\circ s$ restricts to the identity on $U\cap V$, the set-theoretic support of $f-\tilde{f}$ (that is, the set of point where \(f-\tilde f\) is non-zero) must be contained in $(\supp^\circ(f)\setminus (U\cap V))\cup(\supp^\circ(\tilde{f})\setminus (U\cap V))$.
		Since the open supports of $f$ and $\tilde{f}$ are contained in $U$ and $V$ respectively, this reduces to $\supp^\circ(f)\setminus V\cup\supp^\circ(\tilde{f})\setminus U$.
		We shall show that each of these components of the union is meagre.
		
		Note that $\supp^\circ(f)\subseteq U\subseteq\overline{V}^\circ\cdot s(U)\subseteq \overline{V}$, and $\overline{V}\setminus V$ is meagre as it is closed and contains no open subsets.
		Hence $\supp^\circ(f)\setminus V$ is meagre.
		
		We claim that $U$ is dense in the support of $\tilde{f}$. 
		Then by a similar argument as above the set $\supp^\circ(\tilde{f})\subseteq \overline{U}\setminus U$ is meagre.
		Fix an open subset $W\subseteq\supp^\circ(\tilde{f})$.
		Note that $W$ is open in $G$ since $\supp^\circ(\tilde{f})$ is.
		Suppose $W\cap U=\emptyset$.
		We shall show that $W$ is then empty, whereby $\supp^\circ(\tilde{f})$ is contained in the closure of $U$.
		Note that since $s(W)\subseteq s(\supp^\circ(\tilde{f}))\subseteq s(U)$, it suffices to show that $U\cdot s(W)$ is empty.
		We then note that $(U\cdot s(W))\cap V=U\cap V\cdot s(W)=U\cap W=\emptyset$, whereby $U\cdot s(W)$ is empty since $U$ lies in the closure of $V$, so every non-empty open subset of $U$ has non-empty intersection with $V$.
		Thus $W\cap U\neq\emptyset$ for all non-empty open subsets $W\subseteq\supp^\circ(\tilde{f})$, whereby $\supp^\circ(\tilde{f})$ is contained in the closure of $U$ so $\supp^\circ(\tilde{f})\setminus U$ is meagre.
		It follows that $\supp^\circ(f-\tilde{f})$ is a meagre set and the homeomorphism $s|_V^{-1}\circ s$ induces an identification of $\Cc_c(U)$ and $\Cc_c(V)$ in $C^*_\ess(G)$.
		We can do this on neighbourhoods around any point in $G$, so a partition of unity argument shows that $C^*_\ess(H)$ is dense in $C^*_\ess(G)$, whereby they are equal.\\
		
		We shall show the converse via contrapositive.
		Suppose there exists $\gamma\in G\setminus H$ with the property that, for any bisection $V\subseteq H$ we have $\gamma\notin \overline{V}^\circ\cdot s(V)$.
		That is, for any bisection $V\subseteq H$ either no open neighbourhood of $\gamma$ is contained in $\overline{V}$, or $s(\gamma)\notin s(V)$.
		Let $U$ be a bisection containing $\gamma$ and let $K\subseteq U$ be a compact neighbourhood also containing $\gamma$, and denote the interior of $K$ by $K^\circ$.
		Let $f\in\Cc_c(U)$ be a function with $f|_K=1$.
		We claim that the image of $f$ in \(C^*_\ess(G)\) does not belong to $C^*_\ess(H)$.
		Recall that the subspaces $\Cc_c(V)$ for bisections $V\subseteq H$ span a dense subalgebra of $C^*_\ess(H)$, so it suffices to show that $f$ cannot be approximated by any finite sum of elements belonging to subspaces of this form.
		Fix bisections $V_i\subseteq H$ and functions $g_i\in\Cc_c(V_i)$ for $i=1,\dots,n$.
		By assumption, for each $i\geq 1$, at least one of the following two statements is true: that $s(\gamma)\notin s(V_i)$ or $\gamma\notin\overline{V_i}^\circ$.\\
		
		Let $I$ denote the set of values of $i$ for which $s(\gamma)\notin s(V_i)$.
		For $i\in I$, $g_i$ has compact support $K_i\subseteq V_i$, and so $s(K_i)$ is closed in $G^{(0)}$.
		Since $G^{(0)}$ is regular, there are disjoint open neighbourhoods $V'_i$ and $W'_i$ separating $s(K_i)$ and $\{s(\gamma)\}$.
		Let $W_i=s|_{U}^{-1}(W'_i\cap s(U))$, that is, the lift of $W'_i$ to $U$ under the source map.
		Then $g_i|_{W_i}=0$, since $g_i$ is only non-zero on its support, and the source of $g_i$'s support does not intersect the source of $W_i$.
		Let $W:=\bigcap_{i\in I}W_i$, and note that this is an open neighbourhood of $\gamma$.\\
		
		For the second case, let $J$ denote the set of values of $i$ such that $s(\gamma)\in s(V_i)$ but $\gamma\notin\overline{V_i}^\circ$.
		The set $U'_J:=K^\circ\setminus\left(\bigcup_{i\in J}\overline{V_i}\right)$ is then open, non-empty, and has $\gamma$ as a limit point by Lemma~\ref{lem-weirdTopoResult}.
		
		The set $Z:=U'_J\cap W$ is non-empty since $\gamma\in W\cap\overline{U'_J}$, and $W$ is open so intersects $U'_J$.
		Moreover, $Z$ is open as both $U'_J$ and $W$ are, and contained in $K^\circ$ since $U'_J$ is.
		Thus each $g_i$ is zero on $Z$ as $g_i$ is zero on either $W_i$ (if $i\in I$) or is zero on $U'_J$ (if $i\in J$), since $U_J'$ lies in the complement of each $V_i$, which in turn contains the support of $g_i$.
		Thus the sum of the $g_i$ is again zero on $Z$, and so
		$$\left(f-\sum_{i=1}^ng_i\right)\bigg|_Z=f|_Z=1,$$
		as $Z\subseteq K$.
		By Lemma~\ref{lem-bisectionSupBound} we see that $||f-\sum_ig_i||\geq1$ and so $f$ does not lie in the closure of the span of $\Cc_c(V)$ for bisections $V\subseteq H$.
	\end{proof}

	\begin{remark}
		The criterion presented in Proposition~\ref{prop-essGrpdCstarAlgEqualCond} is only non-trivial in the case where \(G\) is both non-Hausdorff and non-effective.

		If \(G\) is effective, Lemma~\ref{lem-effectiveGrpdBisectionRecoveryProperty} reduces the condition in Proposition~\ref{prop-essGrpdCstarAlgEqualCond} to \(H=G\).

		If \(G\) is Hausdorff (and not necessarily effective) then the reduced and essential \(C^*\)-algebras coincide and the inclusion \(C^*_\red(H)\subseteq C^*_\red(G)\) coming from an open subgroupoid \(H\subseteq G\) is exactly that extending the inclusion \(C_c(H)\subseteq C_c(G)\).
		If \(C^*_\red(H)=C^*_\red(G)\) then for any \(\gamma\in G\), a function \(f\in C_c(G)\subseteq C^*_\red(G)\) with \(f(\gamma)=1\) can be approximated by functions  \(f_n\in C_c(H)\).
		We can then eventually deduce that \(f_n(\gamma)\neq 0\) for some \(f_n\), whereby \(\gamma\) belongs to \(H\) and we have \(H=G\).
	\end{remark}
	
	\begin{example}\label{eg-grpdsWithSameEssCstarAlg}
		Consider the line with two origins $G=(0,1]\cup\{0_1,0_0\}$.
		Define the range and source maps $r,s:G\to (0,1]\cup\{0_0\}$ by $r(t)=s(t)=t$ for $t>0$ and $r(0_i)=s(0_i)=0_0$ for $i=0,1$.
		Define a multiplication $G{}_s\times_\red G\to G$ by $t\cdot t=t$ for $t>0$ and $0_i\cdot 0_j=0_{i+j\mod 2}$.
		Then the unit space of $G$ is homeomorphic to the interval $[0,1]$, and the groupoid $G$ satisfies the conditions of Proposition~\ref{prop-essGrpdCstarAlgEqualCond} with the dense open subgroupoid $H=G^{(0)}$.
		Thus $C^*_\ess(G)\cong C[0,1]$, but clearly $G$ is not isomorphic to $[0,1]$ as a groupoid, since it is both non-Hausdorff and contains non-trivial isotropy (although, very little such isotropy).
	\end{example}
	
	Example~\ref{eg-grpdsWithSameEssCstarAlg} illustrates that sometimes removing `small' amounts of isotropy does not change the resulting essential groupoid $C^*$-algebra.
	
	\subsection{More general regular masa inclusions}
	
	Suppose now that \(A\subseteq B\) is a regular masa inclusion (and not necessarily an essential Cartan pair).
	We may still construct the Weyl groupoid and twist as in Section~2.
	Corollary~\ref{cor-mapDescendsToCc(Gtwist)} gives a \({}^*\)-homomorphism \(\Psi_0:\Cc_c(G,\Sigma)\to B\) that intertwines restriction to the unit space with the (necessarily unique) pseudoexpectation associated to the inclusion \(A\subseteq B\).
	Hence, by \cite[Proposition~4.14]{KwasMeyer_EssCrossProd}, the canonical \({}^*\)-homomorphism \(C^*(G,\Sigma)\twoheadrightarrow C^*_\ess(G,\Sigma)\) factors through the canonical map \(C^*(G,\Sigma)\twoheadrightarrow B\), that is, \(B\) is an \emph{exotic \(C^*\)-algebra} for the twist \((G,\Sigma)\).
	
	The inclusion \(A\subseteq B\) is aperiodic by Corollary~\ref{cor-masaIncIsAper}, hence by \cite[Theorem~5.17]{KwasMeyer_EssCrossProd} there is a unique maximal ideal of \(B\) that has trivial intersection with \(A\), named the \emph{hidden ideal}.
	
	\begin{corollary}\label{cor-regMasaIsExoticTwistAlg}
		Let \(A\subseteq B\) be a regular masa inclusion, and let \(I\triangleleft B\) be the hidden ideal, that is, \(I\) is the unique maximal ideal of \(B\) satisfying \(I\cap B=\{0\}\).
		Then the image of \(A\) in the quotient \(B/I\) is masa and detects ideals, and hence \(A\subseteq B/I\) is an essential Cartan inclusion.
		\begin{proof}
			We identify \(A\) with \(C_0(G^{(0)})\) and \(B\) with \(C^*_{\mathrm x}(G,\Sigma)\) as above.
			The maximal ideal \(I\) that is not detected by \(C_0(G^{(0)})\) is then exactly the kernel of the quotient map \(C^*_{\mathrm x}(G,\Sigma)\to C^*_\ess(G,\Sigma)\) by \cite[Theorem~5.28]{KwasMeyer_EssCrossProd}.
			The inclusion \(C_0(G^{(0)})\subseteq C^*_\ess(G,\Sigma)\) is masa by Proposition~\ref{prop-effectiveGrpdGivesMasa}, and the canonical pseudoexpectation for this inclusion is faithful by \cite[Theorem~4.11]{KwasMeyer_EssCrossProd}.
		\end{proof}
	\end{corollary}

	A natural converse question to the statement of Proposition~\ref{prop-effectiveGrpdGivesMasa} is the following: given a twist \((H,\Omega)\) over an \'etale groupoid \(H\) where the inclusion \(A=C_0(H^{(0)})\subseteq B=C^*_{\mathrm x}(H,\Omega)\) is masa for some exotic groupoid \(C^*\)-algebra of \((H,\Omega)\), what properties does \(H\) have?
	It is certainly not possible to assert that \(H\) is effective, as Example~\ref{eg-grpdsWithSameEssCstarAlg} demonstrates.
	The following is not an exhaustive answer to this question, but at least provides a piece of it.
	
	\begin{proposition}\label{prop-regMasaOFAlmostEffective}
		Let \((H,\Omega)\) be a twist over an \'etale groupoid \(H\) with locally compact Hausdorff unit space.
		If the inclusion \(C_0(H^{(0)})\subseteq C^*_{\mathrm x}(H,\Omega)\) is masa for some exotic \(C^*\)-algebra of \((H,\Omega)\) then the unit space \(H^{(0)}\) is dense in the interior of the isotropy of \(H\).
		\begin{proof}
			Suppose that \(A:=C_0(H^{(0)})\subseteq B:=C^*_{\mathrm x}(H,\Omega)\) is masa.
			By Corollary~\ref{cor-regMasaIsExoticTwistAlg}, the quotient \(C^*_{\mathrm x}(H,\Omega)/I\) by the hidden ideal of the inclusion \(C_0(H^{(0)})\subseteq C^*_{\mathrm x}(H,\Omega)\) gives rise to an essential Cartan pair.
			Since the inclusion \(C_0(H^{(0)})\subseteq C^*_{\mathrm x}(H,\Omega)\) is masa, hence aperiodic by Corollary~\ref{cor-masaIncIsAper}, the quotient by the hidden ideal \(I\) corresponds to the essential \(C^*\)-algebra \(C^*_\ess(H,\Omega)\) of \((H,\Omega)\).
			There is then a surjective homomorphism of twists \((H,\Omega)\to (G(A,B/I),\Sigma(A,B/I))\) by Theorem~\ref{thm-groupoidSurjectionProperty}, and the unit space \(H^{(0)}\) is dense in the kernel of the map \(H\to G(A,B/I)\).
			In particular, since \(G\) is effective, the kernel of this map contains the interior of the isotropy of \(H\).
		\end{proof}
	\end{proposition}
	
	\section{Automorphisms of twists and Cartan automorphisms}
	
	Having established the link between twists over certain \'etale groupoids and essential Cartan pairs, one may ask whether automorphisms of one object induce automorphisms of the other.
	We provide constructions in both directions: Cartan automorphisms from twisted groupoid automorphisms and vice versa.
	We also show that these constructions are mutually inverse if the twist is over an effective groupoid, giving an isomorphism of the automorphism group of an essential Cartan pair $A\subseteq B$ to the automorphism group of the corresponding Weyl pair $(G(A,B),\Sigma(A,B))$.
	In particular we show that all automorphisms of twists of effective groupoids that induce essential Cartan pairs come from automorphisms of the induced essential Cartan pair and vice versa.
	
	Throughout this section we assume $(G,\Sigma)$ is a twisted groupoid giving rise to an essential Cartan pair $C_0(G^{(0)})\subseteq C^*_\ess(G,\Sigma)$.
	
	\begin{definition}\label{defn-CartanMorphism}
		Let $(A_1,B_1)$ and $(A_2,B_2)$ be essential Cartan inclusions with local conditional expectations $E_i:B_i\to \Mloc(A_i)$ for $i=1,2$.
		A \emph{Cartan homomorphism} or \emph{homomorphism of Cartan pairs} $(A_1,B_1)\to(A_2,B_2)$ is a ${}^*$-homomorphism $\varphi:B_1\to B_2$ such that
		\begin{enumerate}
			\item\label{ax-normToNorm} $\varphi(N(A_1,B_1))\subseteq N(A_2,B_2)$; and
			\item $E_2\circ\varphi(b)=\varphi\circ E_1(b)$ for all $b\in B_1$ with $E_1(b)\in A_1$.
		\end{enumerate}
		An \emph{isomorphism of Cartan pairs} or \emph{Cartan isomorphism} is a Cartan morphism that is a ${}^*$-isomorphism.
	\end{definition}

	One may expect that an appropriate definition of morphisms of inclusions of \(C^*\)-algebras should preserve the inclusion itself.
	While this is not directly required in the definition of a Cartan homomorphism, it does follow from condition (\ref{ax-normToNorm}) of Definition~\ref{defn-CartanMorphism}.
	The author thanks an anonymous reviewer for pointing this out and providing the following short proof.
	
	\begin{lemma}
		Let \(A_1\subseteq B_1\) and \(A_2\subseteq B_2\) be regular non-degenerate inclusions, and let \(\varphi:B_1\to B_2\) be a \({}^*\)-homomorphism with \(\varphi(N(A_1,B_1))\subseteq N(A_2,B_2)\).
		Then \(\varphi(A_1)\subseteq A_2\).
		\begin{proof}
			Since \(A_1\) is spanned by its positive elements, it suffices to show that positive elements of \(A_1\) are mapped into \(A_2\).
			Fix \(a\in A_1\) positive and let \((u_\lambda)\subseteq A_2\) be an approximate identity.
			Since \(a\) is positive, its square root belongs to \(A_1\), and \(A_1\) is of course contained in the normalisers \(N(A_1,B_1)\).
			Hence we have \(\varphi(a^{\frac12})^*u_\lambda\varphi(a^{\frac12})\in A_2\) for all \(\lambda\), and taking the limit gives \(\varphi(a^{\frac12})^*\varphi(a^{\frac12})=\varphi(a)\in A_2\).
		\end{proof}
	\end{lemma}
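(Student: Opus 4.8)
The plan is to reduce to positive elements and then exploit the normaliser hypothesis applied to square roots. First I would observe that, as with any $C^*$-algebra, $A_1$ is the linear span of its positive cone, so it suffices to prove $\varphi(a)\in A_2$ for an arbitrary positive $a\in A_1$. For such $a$ the square root $a^{1/2}$ lies in $A_1$, and since $A_1$ is a ${}^*$-subalgebra, every element of $A_1$ is trivially a normaliser of $A_1$ in $B_1$; hence the hypothesis yields $\varphi(a^{1/2})\in N(A_2,B_2)$.

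Next I would invoke the defining property of a normaliser: $n^*A_2n\subseteq A_2$ for $n=\varphi(a^{1/2})$. Applied to the terms of an approximate unit $(u_\lambda)\subseteq A_2$, this gives $\varphi(a^{1/2})^*u_\lambda\varphi(a^{1/2})\in A_2$ for every $\lambda$. The last step is the passage to the limit: since $A_2\subseteq B_2$ is non-degenerate, $(u_\lambda)$ is an approximate unit for all of $B_2$, so $u_\lambda\varphi(a^{1/2})\to\varphi(a^{1/2})$ in norm; continuity of multiplication then forces $\varphi(a^{1/2})^*u_\lambda\varphi(a^{1/2})\to\varphi(a^{1/2})^*\varphi(a^{1/2})$, which equals $\varphi(a)$ because $\varphi$ is a ${}^*$-homomorphism and $a^{1/2}$ is self-adjoint. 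As $A_2$ is norm-closed, $\varphi(a)\in A_2$, and the lemma follows.

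The only point demanding any care — the step I would single out as the main (if mild) obstacle — is the claim that an approximate unit for $A_2$ is automatically an approximate unit for $B_2$. This is exactly where non-degeneracy of the inclusion enters, and it is settled by a routine $\varepsilon/3$ estimate comparing $(u_\lambda)$ against an approximate unit of $B_2$ lying in $A_2$. Beyond that, the argument is a two-line computation with no hidden subtlety.
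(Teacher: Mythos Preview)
Your proof is correct and follows essentially the same approach as the paper's: reduce to positive elements, apply the normaliser hypothesis to $\varphi(a^{1/2})$ against an approximate unit of $A_2$, and pass to the limit using non-degeneracy. The only difference is that you make explicit the role of non-degeneracy in the limit step, which the paper leaves implicit.
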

	
	If $\varphi:(A_1,B_1)\to(A_2,B_2)$ is an isomorphism of essential Cartan pairs as above then $\varphi(A_1)=A_2$ will hold since $A_1$ is masa in $B_1$.
	That is, $\varphi$ restricts to an isomorphism $A_1\cong A_2$.
	This also allows us to see that \(\varphi^{-1}\) fulfils the connditions of Definition~\ref{defn-CartanMorphism}, and so the inverse isomorphism of a Cartan isomorphism is also a Cartan isomorphism, justifying the terminology.
	
	\begin{lemma}\label{lem-CartanMorphCondForNormalisers}
		Let $(A_1,B_1)$ and $(A_2,B_2)$ be essential Cartan inclusions with local conditional expectations $E_i:B_i\to \Mloc(A_i)$ for $i=1,2$ and let $\varphi:B_1\to B_2$ be a ${}^*$-homomorphism.
		If $\varphi$ intertwines conditional expectations and $\varphi(A_1)= A_2$ then $\varphi$ is a Cartan morphism.
		\begin{proof}
			We need only show that $\varphi$ maps normalisers to normalisers.
			Let $n\in N(A_1,B_1)$ be a normaliser and fix $a\in A_2$.
			Since $\varphi(A_1)=A_2$, there exists $a'\in A_1$ with $\varphi(a')=a$.
			Then $\varphi(n)^*a\varphi(n)=\varphi(n^*a'n)\in\varphi(A_1)=A_2$.
		\end{proof}
	\end{lemma}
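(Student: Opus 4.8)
The plan is to check directly the two defining conditions of a Cartan homomorphism from Definition~\ref{defn-CartanMorphism}. Condition~(2), namely that $E_2\circ\varphi(b)=\varphi\circ E_1(b)$ whenever $E_1(b)\in A_1$, is precisely the hypothesis that $\varphi$ intertwines the conditional expectations, so nothing needs to be done there. Hence the whole content of the lemma is condition~(\ref{ax-normToNorm}): that $\varphi$ carries $N(A_1,B_1)$ into $N(A_2,B_2)$.

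To establish this I would fix a normaliser $n\in N(A_1,B_1)$ and show that $\varphi(n)^*A_2\varphi(n)\subseteq A_2$ and $\varphi(n)A_2\varphi(n)^*\subseteq A_2$. Given $a\in A_2$, the surjectivity hypothesis $\varphi(A_1)=A_2$ lets me write $a=\varphi(a')$ for some $a'\in A_1$. Since $\varphi$ is a ${}^*$-homomorphism, $\varphi(n)^*a\varphi(n)=\varphi(n^*a'n)$, and as $n$ normalises $A_1$ we have $n^*a'n\in A_1$, so $\varphi(n^*a'n)\in\varphi(A_1)=A_2$. The computation for $\varphi(n)a\varphi(n)^*$ is identical with $na'n^*$ replacing $n^*a'n$. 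Thus $\varphi(n)\in N(A_2,B_2)$ and $\varphi$ is a Cartan morphism.

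I do not anticipate any real obstacle. Once one observes that condition~(2) is supplied directly by the hypothesis, the remaining assertion reduces to pulling the normaliser conjugation back into $B_1$ through the surjection $\varphi|_{A_1}\colon A_1\twoheadrightarrow A_2$. The only point requiring any care is notational --- confirming that the phrase ``intertwines conditional expectations'' is to be read as condition~(2) of the definition --- after which the argument is a two-line calculation.
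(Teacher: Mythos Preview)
Your proof is correct and matches the paper's own argument essentially line for line: both observe that condition~(2) is the hypothesis, then verify condition~(\ref{ax-normToNorm}) by writing $a=\varphi(a')$ and computing $\varphi(n)^*a\varphi(n)=\varphi(n^*a'n)\in\varphi(A_1)=A_2$. The paper omits the symmetric case $\varphi(n)a\varphi(n)^*$, which you include, but otherwise the two are identical.
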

	
	If the pairs $(A_1,B_1)$ and $(A_2,B_2)$ are aperiodic inclusions then the conditional expectations are unique by \cite[Theorem~3.6]{KwasMeyer_AperTopoFree}.
	Thus the expectations are intertwined by any $^*$-homomorphism that restricts to an isomorphism of the subalgebras $A_1$ and $A_2$.
	
	\begin{lemma}\label{lem-mapCartAutToGrpdAutIsHom}
		The assignment $(A,B)$ to $(G(A,B),\Sigma(A,B))$ taking an essential Cartan pair to its Weyl groupoid and twist is functorial for Cartan isomorphisms.
		Explicitly, the identity Cartan isomorphism on \((A,B)\) induces the identity morphism on the Weyl twist, and if $\varphi$ and $\psi$ are automorphisms of $(A,B)$ then Theorem~\textup{\ref{thm-groupoidSurjectionProperty}} yields isomorphisms of twists \(\tilde\beta_\varphi\) and \(\tilde\beta_\psi\) satisfying $\tilde\beta_{\varphi\circ\psi}=\tilde\beta_\varphi\circ\tilde\beta_\psi$.
		\begin{proof}
			For brevity we denote the Weyl twist for \((A,B)\) by \((G,\Sigma)\) and we identify \((A,B)\) with the corresponding pair \((C_0(G^{(0)}),C^*_\ess(G,\Sigma))\) via Theorem~\ref{thm-isoEssTwstGrpdCstarAlg}.
			Theorem~\ref{thm-groupoidSurjectionProperty} ensures the existence of the twisted groupoid homomorphisms \(\tilde\beta_\varphi\) and \(\tilde\beta_\psi\), and Corollary~\ref{cor-WeylGrpdUniqueAmongEffectiveGrpds} ensures it is an isomorphism.
			It is clear that the identity Cartan morphism on \((A,B)\) is mapped ot the identity map on \((G,\Sigma)\).
			We lastly observe 
			\[\tilde\beta_\varphi\circ\tilde\beta_\psi[n,x]=[\varphi(\psi(n)),((\varphi\circ\psi)|_{C_0(G^{(0)})}^*)^{-1}(x)]=\tilde\beta_{\varphi\circ\psi}[n,x],\]
			completing the proof.
		\end{proof}
	\end{lemma}
	
	\begin{proposition}\label{prop-CartanAutFromTwistedGrpdAut}
		Let \((G,\Sigma)\) be a twist over an essential \'etale groupoid, so that the pair \((C_0(G^{(0)}),C^*_\ess(G,\Sigma))\) is an essential Cartan pair.
		Let $(\beta,\tilde\beta):(G,\Sigma)\to (G,\Sigma)$ be an automorphism of twists.
		Then there is an automorphism of Cartan pairs
		\[\Phi_{(\beta,\tilde\beta)}:(C_0(G^{(0)}),C^*_\ess(G,\Sigma))\to (C_0(G^{(0)}),C^*_\ess(G,\Sigma))\] 
		mapping a section $f\in\Cc_c(G,\Sigma)$ to $f\circ\beta^{-1}$.
		\begin{proof}
			Since $(\beta,\tilde\beta)$ is a twisted automorphism, so is $(\beta^{-1},\tilde\beta^{-1})$, and in particular $\beta^{-1}$ is a homeomorphism mapping bisections to bisections, so precomposing sections with \(\beta^{-1}\) maps sums of compactly supported sections to sums of compactly supported sections.
			Let $\Phi_{(\beta,\tilde\beta)}^0:\Cc_c(G,\Sigma)\to\Cc_c(G,\Sigma)$ be the map $\Phi_{(\beta,\tilde\beta)}^0(f)=f\circ\beta^{-1}$.
			That $\Phi_{(\beta,\tilde\beta)}^0$ is a ${}^*$-isomorphism follows from the fact that $(\beta,\tilde\beta)$ is an isomorphism of twists.
			
			The isomorphism $\Phi_{(\beta,\tilde\beta)}^0$ then extends to an automorphism $\Phi_{(\beta,\tilde\beta)}$ of the full twisted groupoid $C^*$-algebra $C^*(G,\Sigma)$.
			Let $R$ be the restriction map from Proposition~\ref{prop-commutingCondExps} and note that $R$ extends to the canonical conditional expectation $EL:C^*(G,\Sigma)\to\Mloc(C_0(G^{(0)}))$.
			Then clearly $R\circ\Phi_{(\beta,\tilde\beta)}^0=\Phi_{(\beta,\tilde\beta)}^0\circ R$, so $\Phi_{(\beta,\tilde\beta)}$ descends to an automorphism of the quotient $C^*_\ess(G,\Sigma)$.
			
			To see that $\Phi_{(\beta,\tilde\beta)}$ is an automorphism of essential Cartan pairs, note that $\beta$ restricts to a homeomorphism of the unit space $G^{(0)}$, and so for $f\in C_0(G^{(0)})$ we have $\Phi_{(\beta,\tilde\beta)}(f)=f\circ\beta^{-1}$ which belongs to $C_0(G^{(0)})$.
			That $\Phi_{(\beta,\tilde\beta)}$ maps normalisers to normalisers follows from Lemma~\ref{lem-CartanMorphCondForNormalisers}.
		\end{proof}
	\end{proposition}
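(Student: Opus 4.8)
The plan is to construct \(\Phi_{(\beta,\tilde\beta)}\) by first defining it on the dense \(*\)-subalgebra \(\Cc_c(G,\Sigma)\), extending it to the full twisted groupoid \(C^*\)-algebra, and then pushing it down to the essential quotient. First I would note that an automorphism of twists \((\beta,\tilde\beta)\) consists of a groupoid automorphism \(\beta\colon G\to G\) together with a \(\TT\)-equivariant map \(\tilde\beta\colon\Sigma\to\Sigma\) lifting it; hence \(\beta\) carries open bisections homeomorphically onto open bisections, \(\tilde\beta\) induces a bundle automorphism of the canonical line bundle \(L\), and the assignment \(f\mapsto f\circ\tilde\beta^{-1}\) (equivalently \(f\circ\beta^{-1}\) on sections) carries \(\Cc_c(U,\Sigma)\) bijectively onto \(\Cc_c(\beta(U),\Sigma)\). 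Summing over bisections yields a linear bijection \(\Phi^0_{(\beta,\tilde\beta)}\) of \(\Cc_c(G,\Sigma)\) with inverse \(\Phi^0_{(\beta^{-1},\tilde\beta^{-1})}\).

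Next I would check that \(\Phi^0_{(\beta,\tilde\beta)}\) is a \(*\)-homomorphism for the convolution product and involution. This is a direct computation: since \(\beta\) preserves the range and source maps and the set of composable pairs and \(\tilde\beta\) is multiplicative, the convolution sum defining \((f\circ\beta^{-1})\ast(g\circ\beta^{-1})\) at \(\gamma\) is obtained by reindexing the convolution sum for \(f\ast g\) at \(\beta^{-1}(\gamma)\) along \(\beta\), and the involution identity follows from \(\beta(\gamma^{-1})=\beta(\gamma)^{-1}\). So \(\Phi^0_{(\beta,\tilde\beta)}\) is a \(*\)-automorphism of \(\Cc_c(G,\Sigma)\), which by the universal property of the maximal completion extends to a \(*\)-automorphism \(\Phi\) of \(C^*(G,\Sigma)\).

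The key step is the descent to \(C^*_\ess(G,\Sigma)\). Because \(\beta\) restricts to a homeomorphism of \(G^{(0)}\) and a homeomorphism carries meagre sets to meagre sets, the restriction map \(R\colon\Cc_c(G,\Sigma)\to\Mloc(C_0(G^{(0)}))\) of Proposition~\ref{prop-commutingCondExps} satisfies \(R\circ\Phi^0_{(\beta,\tilde\beta)}=\theta\circ R\), where \(\theta\) is the automorphism of \(\Mloc(C_0(G^{(0)}))=\bB(G^{(0)})/\mM(G^{(0)})\) induced by \(\beta|_{G^{(0)}}\). Since \(R\) extends to the canonical local expectation \(EL\colon C^*(G,\Sigma)\to\Mloc(C_0(G^{(0)}))\), we obtain \(EL\circ\Phi=\theta\circ EL\); as \(\theta\) is injective and \(EL\) induces a faithful expectation on \(C^*_\ess(G,\Sigma)\), the set \(\{a:EL(a^*a)=0\}\) — which is precisely the kernel of \(C^*(G,\Sigma)\to C^*_\ess(G,\Sigma)\) — is \(\Phi\)-invariant, so \(\Phi\) descends to an automorphism \(\Phi_{(\beta,\tilde\beta)}\) of \(C^*_\ess(G,\Sigma)\) with \(\Phi_{(\beta,\tilde\beta)}(f)=f\circ\beta^{-1}\) for \(f\in\Cc_c(G,\Sigma)\).

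Finally I would verify the Cartan axioms: \(\Phi_{(\beta,\tilde\beta)}\) restricts to \(C_0(G^{(0)})\) because \(\beta\) preserves \(G^{(0)}\) (so \(f\circ\beta^{-1}\in C_0(G^{(0)})\) for \(f\in C_0(G^{(0)})\)), it intertwines the conditional expectations by the previous step, and hence by Lemma~\ref{lem-CartanMorphCondForNormalisers} it maps normalisers to normalisers and is a Cartan homomorphism; it is an automorphism because \(\Phi^0_{(\beta^{-1},\tilde\beta^{-1})}\) descends to its inverse. I expect the only genuine obstacle to be the descent to the essential quotient — making precise that precomposition by the homeomorphism \(\beta\) is compatible with the Borel-modulo-meagre structure defining \(\Mloc(C_0(G^{(0)}))\) and the singular ideal \(J_{\mathrm{sing}}\) — together with the bookkeeping, in the non-Hausdorff case, that \(\beta^{-1}\) really does map the sum space \(\Cc_c(G,\Sigma)\) into itself, which holds because \(\beta\) sends bisections to bisections.
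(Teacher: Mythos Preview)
Your proposal is correct and follows essentially the same approach as the paper: define \(\Phi^0\) on \(\Cc_c(G,\Sigma)\), extend to \(C^*(G,\Sigma)\) by universality, show compatibility with the restriction map \(R\) to descend to \(C^*_\ess(G,\Sigma)\), and then invoke Lemma~\ref{lem-CartanMorphCondForNormalisers}. Your formulation \(R\circ\Phi^0=\theta\circ R\) with \(\theta\in\Aut(\Mloc(C_0(G^{(0)})))\) is in fact slightly more careful than the paper's shorthand \(R\circ\Phi^0=\Phi^0\circ R\), but the content is identical.
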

	
	\begin{corollary}\label{cor-functorForIsos}
		The assignment $(G,\Sigma)$ to $(C_0(G^{(0)}),C^*_\ess(G,\Sigma))$ is functorial, lifting isomorphisms of twists to Cartan isomorphisms.
		That is, the map ${(\beta,\tilde\beta)}\mapsto\Phi_{(\beta,\tilde\beta)}$ from automorphisms of the twist $(G,\Sigma)$ to Cartan automorphisms of $(C_0(G^{(0)}),C^*_\ess(G,\Sigma))$ is a group homomorphism.
		\begin{proof}
			Let ${(\beta,\tilde\beta)},{(\beta',\tilde\beta')}\in\Aut(G,\Sigma)$.
			For each slice $U\subseteq G$ and $f\in\Cc_c(U,\Sigma)$ we have $\Phi_{(\beta,\tilde\beta)}(\Phi_{(\beta',\tilde\beta')}(f))=f\circ(\beta'^{-1}\circ\beta)=\Phi_{(\beta\circ\beta',\tilde\beta\circ\tilde\beta')}(f)$.
			Such normalisers span a dense subalgebra of $C^*_\ess(G,\Sigma)$ and both $\Phi_{(\beta,\tilde\beta)}\circ\Phi_{(\beta',\tilde\beta')}$ and $\Phi_{(\beta\circ\beta',\tilde\beta\circ\tilde\beta')}$ are linear and isometric so the equality extends to all elements of $C^*_\ess(G,\Sigma)$.
		\end{proof}
	\end{corollary}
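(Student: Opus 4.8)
The plan is to verify the homomorphism property on the dense $*$-subalgebra $\Cc_c(G,\Sigma)$ and then promote it to all of $C^*_\ess(G,\Sigma)$ by continuity. Recall from Proposition~\ref{prop-CartanAutFromTwistedGrpdAut} that, for a twisted automorphism $(\beta,\tilde\beta)$, the Cartan automorphism $\Phi_{(\beta,\tilde\beta)}$ is by construction the extension to $C^*_\ess(G,\Sigma)$ of the $*$-isomorphism $\Phi_{(\beta,\tilde\beta)}^0\colon\Cc_c(G,\Sigma)\to\Cc_c(G,\Sigma)$, $f\mapsto f\circ\beta^{-1}$; this makes sense because $\beta^{-1}$ is a homeomorphism carrying open bisections to open bisections, hence sums of compactly supported sections to sums of compactly supported sections.

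First I would fix $(\beta,\tilde\beta),(\beta',\tilde\beta')\in\Aut(G,\Sigma)$, observing that their product in $\Aut(G,\Sigma)$ is $(\beta\circ\beta',\tilde\beta\circ\tilde\beta')$ and that the inverse of the underlying groupoid automorphism $\beta\circ\beta'$ is $\beta'^{-1}\circ\beta^{-1}$. For an open bisection $U\subseteq G$ and $f\in\Cc_c(U,\Sigma)$ a direct computation gives
\[
\Phi_{(\beta,\tilde\beta)}^0\bigl(\Phi_{(\beta',\tilde\beta')}^0(f)\bigr)=(f\circ\beta'^{-1})\circ\beta^{-1}=f\circ(\beta'^{-1}\circ\beta^{-1})=f\circ(\beta\circ\beta')^{-1}=\Phi_{(\beta\circ\beta',\tilde\beta\circ\tilde\beta')}^0(f).
\]
Since the subspaces $\Cc_c(U,\Sigma)$ over open bisections $U$ span $\Cc_c(G,\Sigma)$, which is dense in $C^*_\ess(G,\Sigma)$, and since $\Phi_{(\beta,\tilde\beta)}\circ\Phi_{(\beta',\tilde\beta')}$ and $\Phi_{(\beta\circ\beta',\tilde\beta\circ\tilde\beta')}$ are both bounded---indeed isometric, being $*$-automorphisms of a $C^*$-algebra---linear maps agreeing on this dense subspace, they agree everywhere. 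Together with the trivial identity $\Phi_{(\Id,\Id)}=\Id$, this shows that $(\beta,\tilde\beta)\mapsto\Phi_{(\beta,\tilde\beta)}$ is a group homomorphism.

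I expect no genuine obstacle here: all the substance is already contained in Proposition~\ref{prop-CartanAutFromTwistedGrpdAut}, which shows each $\Phi_{(\beta,\tilde\beta)}$ is a well-defined Cartan automorphism. The only points deserving a moment of care are bookkeeping: getting the direction of composition right so that the assignment comes out covariant rather than contravariant (here precomposition with $\beta^{-1}$ does exactly this), and recalling that a $*$-automorphism of a $C^*$-algebra is isometric, so that density of $\Cc_c(G,\Sigma)$ is enough to conclude.
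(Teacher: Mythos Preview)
Your proof is correct and follows essentially the same route as the paper: verify $\Phi_{(\beta,\tilde\beta)}\circ\Phi_{(\beta',\tilde\beta')}=\Phi_{(\beta\circ\beta',\tilde\beta\circ\tilde\beta')}$ on sections supported in bisections by direct composition, then pass to the whole algebra by linearity, density, and isometry. In fact your computation is more carefully written---the paper's displayed middle term $f\circ(\beta'^{-1}\circ\beta)$ contains a typo (it should be $\beta^{-1}$), which you have avoided.
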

	
	\begin{proposition}\label{prop-grpdAutIsTrivOnAlgStuff}
		Let \((G,\Sigma)\) be a twist over an effective groupoid.
		Let $(\beta,\tilde\beta)$ be an automorphism of the twist $(G,\Sigma)$.
		If the induced Cartan automorphism $\Phi_{(\beta,\tilde\beta)}$ is equal to the identity on $C^*_\ess(G,\Sigma)$ then there is a dense subset $D\subseteq G$ with $\beta|_D=\Id_D$.
		Moreover, if $\beta\neq\Id_G$ then there exists $\eta\in \overline{G^{(0)}}^\circ\setminus G^{(0)}$ and \(G\) is not effective.
		\begin{proof}
			Suppose that $U\subseteq G$ is an open bisection with $\beta(\gamma)\neq\gamma$ for all $\gamma\in U$.
			We claim that there is a non-empty open subset $V\subseteq U$ with $\beta^{-1}(V)\cap V=\emptyset$.
			If $\beta^{-1}(U)\cap U=\emptyset$ then the choice of $V:=U$ suffices.
			Else, if $\beta^{-1}(U)\cap U\neq\emptyset$ then there exists some $\gamma\in U$ with $\beta^{-1}(\gamma)\in U$.
			Since $U$ is Hausdorff, we can pick open $V_1,V_2\subseteq U$ with $\gamma\in V_1$, $\beta^{-1}(\gamma)\in V_2$, and $V_1\cap V_2=\emptyset$.
			Set $V:=V_1\cap \beta(V_2)$.
			Then $$V\cap\beta^{-1}(V)=V_1\cap\beta(V_2)\cap\beta^{-1}(V_1)\cap V_2\subseteq V_1\cap V_2=\emptyset$$ 
			as required.
			In particular, since $V$ and $\beta(V)$ belong to the same bisection and are disjoint the product slices $V^{-1}\beta^{-1}(V)$ and $\beta^{-1}(V)^{-1}V$ are empty.
			Thus for any $f\in\Cc_c(V,\Sigma)$ the compositions $f^*\Phi_{(\beta,\tilde\beta)}(f)$ and $\Phi_{(\beta,\tilde\beta)}(f)^*f$ are zero as their supports are $V^{-1}\tilde\beta(V)$ and $\beta^{-1}(V)^{-1}V$, respectively.
			Since $\Phi_{(\beta,\tilde\beta)}$ is the identity on $C^*_\ess(G,\Sigma)$, we then have $0=||f-\Phi_{(\beta,\tilde\beta)}(f)||^2=||f^*f-f^*\Phi_{(\beta,\tilde\beta)}(f)-\Phi_{(\beta,\tilde\beta)}(f)^*f+\Phi_{(\beta,\tilde\beta)}(f^*f)||=||f^*f+\Phi_{(\beta,\tilde\beta)}(f^*f)||$, and so $f^*f=-\Phi_{(\beta,\tilde\beta)}(f^*f)$.
			But both $f^*f$ and $\Phi_{(\beta,\tilde\beta)}(f^*f)$ are positive elements of $C^*_\ess(G,\Sigma)$, so $f^*f=0$ must hold.
			Thus $f=0$ for all $f\in\Cc_c(V,\Sigma)$, whereby $V=\emptyset$, which is a contradiction.
			Hence there are no open bisections $U\subseteq G$ with $\beta(\gamma)\neq\gamma$ for all $\gamma\in U$, and so the subset $D$ of $G$ on which $\beta$ acts trivially is dense in $G$.\\
			
			Suppose $\beta\neq\Id_G$.
			Then there exists $\gamma\in G$ with $\beta(\gamma)\neq\gamma$.
			Let $U\subseteq G$ be an open bisection with $\gamma\in U$.
			Then $U\cap D$ is dense in $U$, and so $(U\cap D)^{-1}\beta(U\cap D)=(U\cap D)^{-1}(U\cap D)=s(U\cap D)$ is dense in the bisection $U^{-1}\beta(U)$.
			In particular $U^{-1}\beta(U)$ is an open bisection contained in the closure of the unit space $G^{(0)}$, so in particular $U^{-1}\beta(U)\subseteq \overline{G^{(0)}}^\circ$.
			Since $\beta(\gamma)\neq\gamma$, we have that $\eta:=\gamma^{-1}\beta(\gamma)$ is not a unit, and so $\eta\in\overline{G^{(0)}}^\circ\setminus G^{(0)}$.
		\end{proof}
	\end{proposition}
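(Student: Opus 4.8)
The plan is to prove both assertions directly, working with open bisections and the orthogonality estimate already packaged in Lemma~\ref{lem-boundOnNormalisersGrpdAlg}. First I would record a preliminary observation: since $\Phi_{(\beta,\tilde\beta)}$ is the identity on $C^*_\ess(G,\Sigma)$ it is in particular the identity on the subalgebra $C_0(G^{(0)})$, on which it acts by $f\mapsto f\circ(\beta|_{G^{(0)}})^{-1}$; hence $\beta$ fixes the unit space pointwise, so $r(\beta(\gamma))=r(\gamma)$ and $s(\beta(\gamma))=s(\gamma)$ for every $\gamma\in G$.

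To prove density of the fixed-point set $D:=\{\gamma\in G:\beta(\gamma)=\gamma\}$ it suffices, since open bisections form a basis, to show $\beta$ has a fixed point in every non-empty open bisection. Suppose $U$ is a non-empty open bisection on which $\beta$ has no fixed point. Using that $U$ is Hausdorff (it is carried homeomorphically onto an open subset of $G^{(0)}$ by the source map), I would first produce a non-empty open $V\subseteq U$ with $V\cap\beta^{-1}(V)=\emptyset$: if $U\cap\beta^{-1}(U)=\emptyset$ take $V=U$; otherwise choose $\gamma\in U$ with $\beta^{-1}(\gamma)\in U$, separate $\gamma$ and $\beta^{-1}(\gamma)$ by disjoint open sets $V_1,V_2\subseteq U$, and set $V:=V_1\cap\beta(V_2)$, which contains $\gamma$ and satisfies $V\cap\beta^{-1}(V)\subseteq V_1\cap V_2=\emptyset$; applying $\beta$ gives $V\cap\beta(V)=\emptyset$. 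Now $V$ is an open bisection (contained in $U$) and $\beta(V)$ is an open bisection (contained in $\beta(U)$), and for any section $f\in\Cc_c(V,\Sigma)$ the section $\Phi_{(\beta,\tilde\beta)}(f)=f\circ\beta^{-1}$ is supported in $\beta(V)$. Since $\Phi_{(\beta,\tilde\beta)}$ is the identity on $C^*_\ess(G,\Sigma)$ we have $f=\Phi_{(\beta,\tilde\beta)}(f)$ there, so $\|f-\Phi_{(\beta,\tilde\beta)}(f)\|=0$, and Lemma~\ref{lem-boundOnNormalisersGrpdAlg} applied to the disjoint bisections $V$ and $\beta(V)$ gives $\|f\|\le\|f-\Phi_{(\beta,\tilde\beta)}(f)\|=0$. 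Choosing $f$ to be a non-zero compactly supported section of the line bundle over $V$ (which exists since the bundle is locally trivial over the locally compact Hausdorff space $V$) yields a contradiction, because such an $f$ has $\|f\|\neq 0$ in $C^*_\ess(G,\Sigma)$: the element $f^*f$ restricts on $G^{(0)}$ to a function that is positive on the non-empty open set $s(V)$, hence has non-meagre support, and the canonical generalised expectation on the essential quotient is faithful. This shows $D$ is dense in $G$.

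For the second assertion, assume $\beta\neq\Id_G$ and pick $\gamma$ with $\beta(\gamma)\neq\gamma$; fix an open bisection $U\ni\gamma$. By the first part $U\cap D$ is dense in $U$. The product $U^{-1}\beta(U)$ is again an open bisection, and the map $U\to U^{-1}\beta(U)$, $u\mapsto u^{-1}\beta(u)$, is continuous and onto (one uses $r(\beta(u))=r(u)$ and injectivity of $r$ on $U$ to see every element of $U^{-1}\beta(U)$ has this form); it carries the dense subset $U\cap D$ onto $s(U\cap D)\subseteq G^{(0)}$, so $G^{(0)}$ is dense in $U^{-1}\beta(U)$, and since this set is open we get $U^{-1}\beta(U)\subseteq\overline{G^{(0)}}^\circ$. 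Finally $\eta:=\gamma^{-1}\beta(\gamma)$ belongs to $U^{-1}\beta(U)$ and is not a unit since $\beta(\gamma)\neq\gamma$; hence $\eta\in\overline{G^{(0)}}^\circ\setminus G^{(0)}$ and in particular $G^{(0)}\neq\overline{G^{(0)}}^\circ$.

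The step I expect to be the main obstacle is the low-level topology on a possibly non-Hausdorff groupoid: verifying that the products $V^{-1}\beta(V)$ and $U^{-1}\beta(U)$ are honest bisections positioned as claimed, and confirming that a non-zero continuous compactly supported section over an open bisection is non-zero in $C^*_\ess(G,\Sigma)$. Once these are in hand, the argument is just the Hausdorff-separation construction of $V$, the orthogonality bound of Lemma~\ref{lem-boundOnNormalisersGrpdAlg}, and the elementary observation that $\gamma^{-1}\beta(\gamma)$ is a non-unit.
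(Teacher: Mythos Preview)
Your proof is correct and follows essentially the same approach as the paper: the construction of the disjoint-from-its-shift bisection $V$ via Hausdorff separation is identical, the contradiction comes from the same orthogonality of $f$ and $\Phi_{(\beta,\tilde\beta)}(f)$, and the second assertion is handled by the same density argument placing $U^{-1}\beta(U)$ inside $\overline{G^{(0)}}^\circ$. The only cosmetic difference is that you invoke Lemma~\ref{lem-boundOnNormalisersGrpdAlg} as a black box, whereas the paper expands $\|f-\Phi_{(\beta,\tilde\beta)}(f)\|^2$ directly and concludes via positivity of $f^*f$ and $\Phi_{(\beta,\tilde\beta)}(f^*f)$; your added remarks that $\beta$ fixes $G^{(0)}$ pointwise and that a nonzero $f\in\Cc_c(V,\Sigma)$ survives in $C^*_\ess(G,\Sigma)$ make explicit what the paper leaves implicit.
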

	
	\begin{corollary}
		Let \((G,\Sigma)\) be a twist over an effective groupoid.
		An automorphism ${(\beta,\tilde\beta)}$ of $(G,\Sigma)$ gives rise to the identity automorphism $\Phi_{(\beta,\tilde\beta)}$ of $C^*_\ess(G,\Sigma)$ if and only if ${(\beta,\tilde\beta)}=(\Id_G,\Id_\Sigma)$.
		\begin{proof}
			If $\tilde\beta$ is not the identity but $\Phi_{(\beta,\tilde\beta)}=\Id_{C^*_\ess(G,\Sigma)}$ then Proposition~\ref{prop-grpdAutIsTrivOnAlgStuff} shows that $\overline{G^{(0)}}^\circ$ is strictly larger than $G^{(0)}$, leading to a contradiction.
		\end{proof}
	\end{corollary}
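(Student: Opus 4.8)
The plan is to treat the two implications separately. The forward direction is immediate: if $(\beta,\tilde\beta)=(\Id_G,\Id_\Sigma)$, then in the notation of Proposition~\ref{prop-CartanAutFromTwistedGrpdAut} the map $\Phi_{(\beta,\tilde\beta)}^0$ sends each section $f\in\Cc_c(G,\Sigma)$ (regarded as a function on $\Sigma$) to $f\circ\Id_\Sigma=f$, so the induced automorphism of $C^*_\ess(G,\Sigma)$ is the identity. For the converse I would assume $\Phi_{(\beta,\tilde\beta)}=\Id$ on $C^*_\ess(G,\Sigma)$ and establish first $\beta=\Id_G$ and then $\tilde\beta=\Id_\Sigma$.

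For $\beta=\Id_G$, the topological input needed is the equality $\overline{G^{(0)}}^\circ=G^{(0)}$. Since $G$ is \'etale, $G^{(0)}$ is open in $G$, which gives one inclusion; for the other, any point of $\overline{G^{(0)}}$ is a limit of a net of units, and applying the continuous maps $r$ and $s$ together with uniqueness of limits in the Hausdorff space $G^{(0)}$ forces that point into the isotropy of $G$, so $\overline{G^{(0)}}^\circ$ is contained in the interior of the isotropy, which equals $G^{(0)}$ because the Weyl groupoid is effective (Lemma~\ref{lem-weylGrpdIsEffective}). Proposition~\ref{prop-grpdAutIsTrivOnAlgStuff} then closes this step: were $\beta\neq\Id_G$, it would produce an element of $\overline{G^{(0)}}^\circ\setminus G^{(0)}=\emptyset$, a contradiction.

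It then remains to deduce $\tilde\beta=\Id_\Sigma$. Suppose not; then $\tilde\beta^{-1}$ moves some point, so fix $\sigma_0\in\Sigma$ with $\tilde\beta^{-1}(\sigma_0)\neq\sigma_0$. The inverse automorphism of twists is $(\beta^{-1},\tilde\beta^{-1})=(\Id_G,\tilde\beta^{-1})$, so $\tilde\beta^{-1}$ covers the identity on $G$; since $\Sigma\to G$ is a principal $\TT$-bundle this forces $\tilde\beta^{-1}(\sigma_0)=w\sigma_0$ for a unique $w\in\TT$, and $w\neq1$ because $\tilde\beta^{-1}(\sigma_0)\neq\sigma_0$. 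Choose an open bisection $U\subseteq G$ through the image of $\sigma_0$ and a section $f\in\Cc_c(U,\Sigma)$ with $f(\sigma_0)=1$. Then $\Phi_{(\beta,\tilde\beta)}(f)=f\circ\tilde\beta^{-1}$ is supported on $\Sigma|_{\beta(U)}=\Sigma|_U$, continuous there, and satisfies $\Phi_{(\beta,\tilde\beta)}(f)(\sigma_0)=f(w\sigma_0)=\bar w$. Hence $g:=\Phi_{(\beta,\tilde\beta)}(f)-f$ lies in $\Cc_c(U,\Sigma)$, has continuous restriction to the bisection $U$, and satisfies $|g(\sigma_0)|=|\bar w-1|>0$; Lemma~\ref{lem-bisectionSupBound} then gives $||g||\geq|\bar w-1|>0$ in $C^*_\ess(G,\Sigma)$, contradicting $\Phi_{(\beta,\tilde\beta)}=\Id$. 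Therefore $\tilde\beta=\Id_\Sigma$.

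I expect the main obstacle to be this last step, since Proposition~\ref{prop-grpdAutIsTrivOnAlgStuff} only constrains the groupoid part $\beta$; once $\beta=\Id_G$ one has to argue directly that a nontrivial ``gauge part'' of $\tilde\beta$ is still visible in $C^*_\ess(G,\Sigma)$. The subtlety is that the singular ideal is large, so $\tilde\beta$ cannot be detected at a single point; the remedy is to promote a single misplaced $\sigma_0$ to a section that is nonzero and continuous on an open bisection and then invoke the Baire-category estimate of Lemma~\ref{lem-bisectionSupBound}, which prevents such a section from vanishing in the essential quotient.
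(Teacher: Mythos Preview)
Your argument is correct and in fact more complete than the paper's. For the step $\beta=\Id_G$ you follow exactly the paper's route: the Hausdorff unit space forces $\overline{G^{(0)}}$ into the isotropy, effectivity of the Weyl groupoid then gives $\overline{G^{(0)}}^\circ=G^{(0)}$, and Proposition~\ref{prop-grpdAutIsTrivOnAlgStuff} yields the contradiction.

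Where you differ is in the treatment of $\tilde\beta$. The paper's proof simply writes ``if $\tilde\beta$ is not the identity\dots\ then Proposition~\ref{prop-grpdAutIsTrivOnAlgStuff} gives\dots'', but that proposition only produces an element of $\overline{G^{(0)}}^\circ\setminus G^{(0)}$ under the hypothesis $\beta\neq\Id_G$; it says nothing about the gauge part once $\beta=\Id_G$. You correctly isolate this residual case and dispatch it directly: a nontrivial $\tilde\beta$ covering $\Id_G$ acts on each $\TT$-fibre, so at some $\sigma_0$ it multiplies by $w\neq 1$, and the resulting difference $f\circ\tilde\beta^{-1}-f$ is a genuine element of $\Cc_c(U,\Sigma)$ with nonzero value at $\sigma_0$, which Lemma~\ref{lem-bisectionSupBound} prevents from vanishing in the essential quotient. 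This extra paragraph supplies exactly the step the paper elides, and your anticipation of the difficulty (that the singular ideal could in principle absorb pointwise discrepancies, so one must pass to a continuous section on a bisection) is on the mark.
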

	
	\begin{corollary}\label{cor-twistGrpdAutsToCartInjIfEffective}
		If $G$ is effective then the homomorphism 
		$$\Phi:\Aut(G,\Sigma)\to\Aut(C_0(G^{(0)}),C^*_\ess(G,\Sigma))$$
		is injective.
	\end{corollary}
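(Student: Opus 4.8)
Since $\Phi$ is a group homomorphism by Corollary~\ref{cor-functorForIsos}, the plan is to show that its kernel is trivial: I would take an automorphism $(\beta,\tilde\beta)$ of $(G,\Sigma)$ with $\Phi_{(\beta,\tilde\beta)}=\Id_{C^*_\ess(G,\Sigma)}$ and argue first that $\beta=\Id_G$ and then that $\tilde\beta=\Id_\Sigma$.

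For the first step I would invoke Proposition~\ref{prop-grpdAutIsTrivOnAlgStuff}: if $\beta\neq\Id_G$, then $\overline{G^{(0)}}^\circ\setminus G^{(0)}$ is non-empty. But this cannot happen when $G$ is effective. Indeed, since $G^{(0)}$ is Hausdorff and $r,s$ are continuous, any net $(u_i)\subseteq G^{(0)}$ converging to $\gamma\in G$ has $u_i=r(u_i)\to r(\gamma)$ and $u_i=s(u_i)\to s(\gamma)$ in the Hausdorff space $G^{(0)}$, forcing $r(\gamma)=s(\gamma)$; hence $\overline{G^{(0)}}$ is contained in the isotropy bundle of $G$. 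Therefore $\overline{G^{(0)}}^\circ$ is an open subset of the isotropy, so is contained in the interior of the isotropy, which equals $G^{(0)}$ by effectivity. Combined with $G^{(0)}\subseteq\overline{G^{(0)}}^\circ$ (as $G$ is \'etale), this gives $\overline{G^{(0)}}^\circ=G^{(0)}$, contradicting the conclusion of the proposition. Hence $\beta=\Id_G$.

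For the second step, with $\beta=\Id_G$: unwinding the twist-morphism axioms, $\tilde\beta$ restricts on each $\TT$-torsor fibre of the projection $q\colon\Sigma\to G$ to a $\TT$-equivariant bijection fixing $G^{(0)}\times\TT$ pointwise, so $\tilde\beta(\sigma)=c(q(\sigma))\,\sigma$ for a continuous homomorphism $c\colon G\to\TT$ with $c|_{G^{(0)}}\equiv 1$; and from the formula of Proposition~\ref{prop-CartanAutFromTwistedGrpdAut} the induced map on $\Cc_c(G,\Sigma)$ (sections regarded as functions $\Sigma\to\CC$) is pointwise multiplication by $c$. If $c(\gamma)\neq 1$ for some $\gamma\in G$, I would pick an open bisection $U\ni\gamma$ and $f\in\Cc_c(U,\Sigma)$ with $f(\gamma)\neq 0$; then $f-\Phi_{(\Id,\tilde\beta)}(f)=(1-c)f$ is continuous on $U$, so Lemma~\ref{lem-bisectionSupBound} yields $||f-\Phi_{(\Id,\tilde\beta)}(f)||\geq|1-c(\gamma)|\,|f(\gamma)|>0$, contradicting $\Phi_{(\Id,\tilde\beta)}=\Id$. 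Hence $c\equiv 1$, so $\tilde\beta=\Id_\Sigma$, and $\ker\Phi$ is trivial.

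The routine bookkeeping here (identifying $\Phi_{(\Id,\tilde\beta)}$ on $\Cc_c(G,\Sigma)$ with multiplication by the cocycle $c$, and checking that $(1-c)f$ is continuous on $U$ in the possibly non-Hausdorff line bundle) is immediate once the definitions are unwound; the only delicate point is the identity $\overline{G^{(0)}}^\circ=G^{(0)}$, which is precisely where effectivity of $G$ — rather than mere topological principality — and Hausdorffness of $G^{(0)}$ are used. Alternatively, since by Corollary~\ref{cor-WeylGrpdUniqueAmongEffectiveGrpds} an effective twist giving rise to an essential Cartan pair is canonically isomorphic to its own Weyl twist, one could instead deduce the statement directly from the preceding corollary.
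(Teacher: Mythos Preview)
Your proposal is correct and follows the paper's approach: the corollary is stated without proof, being an immediate consequence of the preceding (unlabelled) corollary, whose argument invokes Proposition~\ref{prop-grpdAutIsTrivOnAlgStuff} together with the identity $\overline{G^{(0)}}^\circ=G^{(0)}$ for effective $G$ with Hausdorff unit space, exactly as in your first step. Your second step---writing $\tilde\beta(\sigma)=c(q(\sigma))\sigma$ for a continuous homomorphism $c\colon G\to\TT$ and using Lemma~\ref{lem-bisectionSupBound} to force $c\equiv 1$---actually supplies detail that the paper's preceding corollary leaves implicit: Proposition~\ref{prop-grpdAutIsTrivOnAlgStuff} as stated only yields $\beta=\Id_G$, and the paper's sentence ``if $\tilde\beta$ is not the identity\ldots then Proposition~\ref{prop-grpdAutIsTrivOnAlgStuff} gives\ldots'' does not literally address the case $\beta=\Id_G$, $\tilde\beta\neq\Id_\Sigma$. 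So your argument is, if anything, more careful than the paper's.
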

	
	\begin{theorem}\label{thm-isoOfAutGrps}
		For the Weyl twist $(G,\Sigma)$ of an essential Cartan pair $(A,B)$ the constructions in Theorem~\textup{\ref{thm-groupoidSurjectionProperty}} and Proposition~\textup{\ref{prop-CartanAutFromTwistedGrpdAut}} are mutually inverse.
		That is, for a twisted groupoid automorphism ${(\beta,\tilde\beta)}:(G,\Sigma)\to(G,\Sigma)$ we have $\beta_{\Phi_{(\beta,\tilde\beta)}}=\beta$ and \(\tilde\beta_{\Phi_{(\beta,\tilde\beta)}}=\tilde\beta\).
		For a Cartan automorphism $\Phi:(C_0(G^{(0)}),C^*_\ess(G,\Sigma))\to(C_0(G^{(0)}),C^*_\ess(G,\Sigma))$ we have $\Phi_{(\beta_\Phi,\tilde\beta_\Phi)}=\Phi$.
		Hence $\Aut(G,\Sigma)$ and $\Aut(C_0(G^{(0)}),C^*_\ess(G,\Sigma))$ are isomorphic as groups via these constructions.
		\begin{proof}
			Let $\Phi:(C_0(G^{(0)}),C^*_\ess(G,\Sigma))\to(C_0(G^{(0)}),C^*_\ess(G,\Sigma))$ be a Cartan automorphism.
			By Lemma~\ref{lem-mapCartAutToGrpdAutIsHom} we have $\tilde\beta_\Phi[n,x]=[\Phi(n),\theta_\Phi(x)]$, and so $\tilde\beta_\Phi^{-1}[n,x]=[\Phi^{-1}(n),\theta_\Phi^{-1}(x)]$.
			For a normaliser $n\in N(C_0(G^{(0)}),C^*_\ess(G,\Sigma))$ let $U_n:=\{[\alpha_n,x]:x\in\dom(n)\}$ be the bisection specified by $n$, and fix $f\in\Cc_c(U_n,\Sigma)$.
			Then $f=\hat{n}h$ for some $h\in C_0(G^{(0)})$ by Lemma~\ref{lem-compactlySupportedSectionsAsEvalutatedNormalisers}, and moreover for all $[n,x]$ in the open support of $f$ we have $[n,x]=[znh,x]$ for some fixed $z\in\TT$ (namely $z=\overline{h(x)}$).
			Thus
			\begin{align*}
				\Phi(f)[\Phi(f),\theta_\Phi(x)]&=\Phi(\hat{n}h)[\Phi(\hat{n}h),\theta_\Phi(x)]\\
				&=\sqrt{\Phi((nh)^*(nh))(\theta_\Phi(x))}\\
				&=\sqrt{(nh)^*(nh)(x)}\\
				&=\sqrt{f^*f(x)}\\
				&=f[f,x],
			\end{align*}
			and
			\begin{align*}
				\Phi_{(\beta_\Phi,\tilde\beta_\Phi)}(f)[\Phi(f),\theta_\Phi(x)]&=f(\tilde\beta_\Phi^{-1}[\Phi(f),\theta_\Phi(x)])\\
				&=f[\Phi^{-1}\Phi(f),\theta_\Phi^{-1}\theta_\Phi(x)]\\
				&=f[f,x].
			\end{align*}
			Hence $\Phi=\Phi_{(\beta_\Phi,\tilde\beta_\Phi)}$, so the map $\Aut(G,\Sigma)\to\Aut(C_0(G^{(0)}),C^*_\ess(G,\Sigma))$ is surjective.
			The Weyl groupoid is effective by Lemma~\ref{lem-weylGrpdIsEffective}, and so Corollary~\ref{cor-twistGrpdAutsToCartInjIfEffective} gives that this map $\Aut(G,\Sigma)\to\Aut(C_0(G^{(0)}),C^*_\ess(G,\Sigma))$ is injective, hence an isomorphism.
			The above calculation shows that the construction in Theorem~\ref{thm-groupoidSurjectionProperty} is a one-sided inverse to this isomorphism, hence is the two-sided inverse and is itself an isomorphism.
		\end{proof}
	\end{theorem}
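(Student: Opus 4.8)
The plan is to reduce the whole statement to a single computation --- that applying the construction of Theorem~\ref{thm-groupoidSurjectionProperty} followed by that of Proposition~\ref{prop-CartanAutFromTwistedGrpdAut} returns the original Cartan automorphism --- and to derive everything else formally from the injectivity already recorded in Corollary~\ref{cor-twistGrpdAutsToCartInjIfEffective}. Write $\Lambda\colon(\beta,\tilde\beta)\mapsto\Phi_{(\beta,\tilde\beta)}$ for the map of Proposition~\ref{prop-CartanAutFromTwistedGrpdAut} and $F\colon\Phi\mapsto(\beta_\Phi,\tilde\beta_\Phi)$ for the map of Theorem~\ref{thm-groupoidSurjectionProperty}. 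By Lemma~\ref{lem-weylGrpdIsEffective} the Weyl groupoid $G=G(A,B)$ is effective, so Corollary~\ref{cor-WeylGrpdUniqueAmongEffectiveGrpds} guarantees that $F$ indeed takes values in $\Aut(G,\Sigma)$, Corollary~\ref{cor-twistGrpdAutsToCartInjIfEffective} gives that $\Lambda$ is injective, and Corollary~\ref{cor-functorForIsos} together with Lemma~\ref{lem-mapCartAutToGrpdAutIsHom} give that $\Lambda$ and $F$ are group homomorphisms.

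First I would prove $\Lambda\circ F=\mathrm{id}$, i.e.\ $\Phi_{(\beta_\Phi,\tilde\beta_\Phi)}=\Phi$ for every Cartan automorphism $\Phi$. Since both sides are linear and isometric, it suffices to check this on the spanning family of sections $f=\hat{n}h\in\Cc_c(U_n,\Sigma)$ furnished by Lemma~\ref{lem-compactlySupportedSectionsAsEvalutatedNormalisers}. The key inputs are: the twist homeomorphism attached to $\Phi$ is $\tilde\beta_\Phi[n,x]=[\Phi(n),\theta_\Phi(x)]$, where $\theta_\Phi$ is the homeomorphism of $G^{(0)}$ dual to $\Phi|_{C_0(G^{(0)})}$ (the formula underlying Lemma~\ref{lem-mapCartAutToGrpdAutIsHom}), so $\tilde\beta_\Phi^{-1}[n,x]=[\Phi^{-1}(n),\theta_\Phi^{-1}(x)]$; and, by Proposition~\ref{prop-CartanAutFromTwistedGrpdAut}, $\Phi_{(\beta_\Phi,\tilde\beta_\Phi)}$ acts on sections by precomposition with $\tilde\beta_\Phi^{-1}$. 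Evaluating the two sections at a point $[\Phi(n),\theta_\Phi(x)]$ in the open support of $\Phi(f)$, and rewriting $[n,x]=[znh,x]$ on the open support of $f$ with $z=\overline{h(x)}$ as in Lemma~\ref{lem-compactlySupportedSectionsAsEvalutatedNormalisers}, both sides collapse to $\sqrt{f^*f(x)}$ times the same element of $\TT$; this is exactly the computation already present in the proofs of Lemmata~\ref{lem-localEvaluationMap} and~\ref{lem-compactlySupportedSectionsAsEvalutatedNormalisers} combined with the definition of $\hat{n}$.

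Once $\Lambda\circ F=\mathrm{id}$ is in hand, $\Lambda$ is surjective and $F$ is injective; combined with the injectivity of $\Lambda$, this makes $\Lambda$ a bijective group homomorphism, hence a group isomorphism $\Aut(G,\Sigma)\cong\Aut(C_0(G^{(0)}),C^*_\ess(G,\Sigma))$. Since $\Lambda$ is invertible and $\Lambda\circ F=\mathrm{id}$, we get $F=\Lambda^{-1}$, so $F$ is a bijective homomorphism and $F\circ\Lambda=\mathrm{id}$ as well; unwinding definitions, the latter says precisely $\beta_{\Phi_{(\beta,\tilde\beta)}}=\beta$ and $\tilde\beta_{\Phi_{(\beta,\tilde\beta)}}=\tilde\beta$ for every twisted automorphism $(\beta,\tilde\beta)$, which is the remaining assertion.

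The step I expect to be the main obstacle is the explicit identity $\Phi_{(\beta_\Phi,\tilde\beta_\Phi)}=\Phi$. One has to juggle three identifications at once --- sections of the Fell line bundle as $\TT$-equivariant functions on $\Sigma$, the representation of normalisers as such functions via Lemma~\ref{lem-localEvaluationMap}, and the explicit formula for $\hat{n}$ --- and verify in particular that the choice of representative $[znh,x]$ with $z=\overline{h(x)}$ makes the $\TT$-coordinates on the two sides agree rather than differ by a complex conjugate. Everything after that is the formal ``one-sided inverse plus injectivity yields a bijection'' argument, which requires no further analysis.
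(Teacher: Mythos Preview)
Your proposal is correct and follows essentially the same route as the paper: you reduce to the single computation $\Phi_{(\beta_\Phi,\tilde\beta_\Phi)}=\Phi$ on sections $f=\hat n h$ via Lemma~\ref{lem-compactlySupportedSectionsAsEvalutatedNormalisers} and the formula $\tilde\beta_\Phi[n,x]=[\Phi(n),\theta_\Phi(x)]$, then invoke effectiveness of the Weyl groupoid and Corollary~\ref{cor-twistGrpdAutsToCartInjIfEffective} to upgrade a one-sided inverse to a two-sided one. The only addition beyond the paper is your explicit bookkeeping that $F$ lands in $\Aut(G,\Sigma)$ via Corollary~\ref{cor-WeylGrpdUniqueAmongEffectiveGrpds}, which the paper leaves implicit.
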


	The isomorphism of Theorem~\ref{thm-isoOfAutGrps} generalises a similar result of Raad \cite{Raad_GeneraliseRenault}, where the isomorphism is found in the case of Renault's Cartan pairs and reduced twisted groupoid $C^*$-algebras.
	
	In a recent preprint, Komura \cite[Propsition~2.2.2]{Komura_HomsGrpdAlgs} characterises the Cartan automorphism group of \(C^*_\red(G)\) for Hausdorff \'etale effective groupoids \(G\) as the crossed product of the automorphism group of \(G\) by the group of continuous homomorphisms \(G\to\TT\).
	Komura's proof does not explicitly use the Cartan structure of the inclusion \(C_0(G^{(0)})\subseteq C^*_\red(G)\), but this inclusion is always Cartan in the sense of Renault by \cite[Propositions~4.2,4.3]{Renault_Cartan}.
	
	\begin{corollary}
		The category of twists of effective \'etale groupoids with locally compact Hausdorff unit spaces with isomorphisms of twists as morphisms is equivalent to the category of essential Cartan pairs with Cartan isomorphisms via the functor taking a twist \((G,\Sigma)\) to the pair \((C_0(G^{(0)}),C^*_\ess(G,\Sigma))\).
		\begin{proof}
			Corollary~\ref{cor-functorForIsos} gives that this assignment is a functor, Theorem~\ref{thm-isoOfAutGrps} gives that it is full and faithful.
			Proposition~\ref{prop-effectiveGrpdGivesMasa} implies that effective groupoids yield masa inclusions, hence essential Cartan inclusions, and Theorem~\ref{thm-isoEssTwstGrpdCstarAlg} implies that all essential Cartan inclusions are of this form.
			Hence the functor is essentially surjective.
		\end{proof}
	\end{corollary}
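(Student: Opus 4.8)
The plan is to establish the three hallmarks of an equivalence of categories for the assignment $F\colon(G,\Sigma)\mapsto\big(C_0(G^{(0)}),C^*_\ess(G,\Sigma)\big)$: that $F$ is a well-defined functor, that it is essentially surjective, and that it is fully faithful on isomorphisms. Almost all of the analytic content has already been discharged in the preceding sections, so the task is to assemble those results and to be careful about one categorical reduction.

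First I would check that $F$ takes values in essential Cartan pairs. Regularity of $C_0(G^{(0)})\subseteq C^*_\ess(G,\Sigma)$ is immediate since $\Cc_c(G,\Sigma)$ is densely spanned by sections supported on open bisections, each of which is a normaliser; Proposition~\ref{prop-effectiveGrpdGivesMasa} gives that $C_0(G^{(0)})$ is a masa when $G$ is effective; and the canonical local expectation $EL\colon C^*_\ess(G,\Sigma)\to\Mloc(C_0(G^{(0)}))$ (the map $R$ of Proposition~\ref{prop-commutingCondExps}, cf. \cite[Proposition~4.3]{KM2}) is faithful on the essential quotient precisely because the singular ideal has been divided out. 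On morphisms, I would note that the construction $\Phi_{(\beta,\tilde\beta)}\colon f\mapsto f\circ\beta^{-1}$ of Proposition~\ref{prop-CartanAutFromTwistedGrpdAut} makes sense verbatim for an \emph{isomorphism} of twists $(\beta,\tilde\beta)\colon(G_1,\Sigma_1)\to(G_2,\Sigma_2)$, produces a Cartan isomorphism of the images by the same argument, and is functorial by the computation in Corollary~\ref{cor-functorForIsos}. Essential surjectivity is then Theorem~\ref{thm-isoEssTwstGrpdCstarAlg} together with Lemma~\ref{lem-weylGrpdIsEffective}: an arbitrary essential Cartan pair $(A,B)$ is Cartan-isomorphic to $F\big(G(A,B),\Sigma(A,B)\big)$, and the Weyl groupoid is effective, hence an object of the source category.

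The substantive remaining point is full faithfulness, i.e.\ that for all objects $(G_1,\Sigma_1),(G_2,\Sigma_2)$ the map
$$F\colon\mathrm{Iso}\big((G_1,\Sigma_1),(G_2,\Sigma_2)\big)\longrightarrow\mathrm{Iso}\big(F(G_1,\Sigma_1),F(G_2,\Sigma_2)\big)$$
is a bijection. Theorem~\ref{thm-isoOfAutGrps} is exactly this statement when $(G_1,\Sigma_1)=(G_2,\Sigma_2)$, and I would bootstrap to the general case using that both sides are torsors over the respective automorphism groups. If the right-hand set is empty there is nothing to prove, and it is empty exactly when the left-hand one is: a Cartan isomorphism $F(G_1,\Sigma_1)\cong F(G_2,\Sigma_2)$ together with Theorem~\ref{thm-isoEssTwstGrpdCstarAlg} identifies $C^*_\ess(G_1,\Sigma_1)$ with the essential algebra of the Weyl twist of $F(G_2,\Sigma_2)$, whence Corollary~\ref{cor-WeylGrpdUniqueAmongEffectiveGrpds} (applicable since $G_1$ is effective) yields a twist isomorphism $(G_1,\Sigma_1)\cong(G_2,\Sigma_2)$. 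Otherwise, fixing one element $(\beta_0,\tilde\beta_0)$ of the left-hand set and using $\Phi_{(\beta_0,\tilde\beta_0)}$ on the right, precomposition identifies the left set with $\Aut(G_1,\Sigma_1)$ and the right set with $\Aut(F(G_1,\Sigma_1))$; under these identifications $F$ becomes the group homomorphism of Theorem~\ref{thm-isoOfAutGrps}, which is an isomorphism (injectivity being Corollary~\ref{cor-twistGrpdAutsToCartInjIfEffective}). Hence $F$ is bijective on each $\mathrm{Iso}$-set, so $F$ is full and faithful, and combined with essential surjectivity it is an equivalence of categories.

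I expect the only genuine obstacle to be the torsor/transport argument of the last paragraph: verifying that the reduction from arbitrary twist isomorphisms to automorphisms is clean, and that Corollary~\ref{cor-WeylGrpdUniqueAmongEffectiveGrpds} is invoked in exactly the right form --- via the canonical identification of the Weyl twist of $F(G,\Sigma)$ with $(G,\Sigma)$ when $G$ is effective. No new estimates are needed; the analytic heavy lifting lives entirely in the earlier sections.
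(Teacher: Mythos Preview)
Your proposal is correct and follows essentially the same route as the paper, invoking the same results (Proposition~\ref{prop-effectiveGrpdGivesMasa}, Theorem~\ref{thm-isoEssTwstGrpdCstarAlg}, Corollary~\ref{cor-functorForIsos}, Theorem~\ref{thm-isoOfAutGrps}) for the same purposes. The one place you go beyond the paper is in the torsor/transport argument for full faithfulness: the paper simply cites Theorem~\ref{thm-isoOfAutGrps}, which as stated concerns only \emph{automorphisms}, whereas full faithfulness requires bijectivity on $\mathrm{Iso}$-sets between distinct objects; your reduction via a fixed isomorphism and Corollary~\ref{cor-WeylGrpdUniqueAmongEffectiveGrpds} supplies exactly the missing bridge, so your version is in fact slightly more complete.
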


	\printbibliography

\end{document}